\definecolor{darkred}{RGB}{160,0,0}
\definecolor{darkblue}{RGB}{0,0,160}
\theoremstyle{plain}
\newtheorem{theorem}{Theorem}[section]
\newtheorem{corollary}[theorem]{Corollary} 
\newtheorem{lemma}[theorem]{Lemma}
\newtheorem{proposition}[theorem]{Proposition}
\theoremstyle{definition}
\newtheorem{definition}[theorem]{Definition} 	
\newtheorem{remark}[theorem]{Remark}
\newcommand{\R}{\mathbb{R}}
\newcommand{\N}{\mathbb{N}}
\newcommand{\PP}{\mathbb{P}}
\newcommand{\NN}{\mathrm{N}}
\newcommand{\A}{\mathcal{A}}
\newcommand{\D}{\mathcal{D}}
\newcommand{\E}{\mathbb{E}}
\newcommand{\Var}{\mathrm{Var}}
\newcommand{\Cov}{\mathrm{Cov}}
\newcommand{\Bin}{\mathrm{Bin}}
\newcommand{\GR}{\mathrm{GR}}
\newcommand{\rk}{\mathrm{rk}}
\newcommand{\Inv}{\mathrm{Inv}}
\newcommand{\inv}{\mathrm{inv}}
\newcommand{\des}{\mathrm{des}}
\newcommand{\dx}{\mathrm{d}x}
\newcommand{\dy}{\mathrm{d}y}
\renewcommand{\P }{{\mathbb P}}
\newcommand{\eid}{\stackrel{\D}{=}}
\newcommand{\Hajek}{H\'{a}jek}
\newcommand{\vep}{\varepsilon}
\newcommand{\nto}{n \to \infty}
\newcommand{\kn}{k_n}
\title{Joint extremes of inversions and descents of random permutations}
\author[P. Dörr]{Philip Dörr}
\address{Department of Mathematics, Otto-von-Guericke University Magdeburg, Universitätsplatz 2, 39106 Magdeburg, Germany}
\email{philip.doerr@ovgu.de}
\author[J. Heiny]{Johannes Heiny}
\address{Department of Mathematics,
Stockholm University,
Albano hus 1,
10691 Stockholm,
Sweden}
\email{johannes.heiny@math.su.se}
\subjclass[2010]{Primary: 60G70, 05A16; Secondary: 20F55, 62R01}
\keywords{Permutation statistics, joint distribution, extreme value theory, central limit theorem, maximum, Coxeter group}
\begin{document}

\begin{abstract}
    We provide asymptotic theory for the joint distribution of $X_\inv$ and $X_\des$, the numbers of inversions and descents of random permutations. Recently, \cite{dorr2022extreme} proved that $X_\inv$, respectively, $X_\des$ is in the maximum domain of attraction of the Gumbel distribution. To tackle the dependency between these two permutation statistics, we use Hájek projections and a suitable quantitative Gaussian approximation. We show that $(X_\inv, X_\des)$ is in the maximum domain of attraction of the two-dimensional Gumbel distribution with independent margins. This result can be stated in the broader combinatorial framework of finite Coxeter groups, on which our method also yields the central limit theorem for $(X_\inv, X_\des)$ and various other permutation statistics as a novel contribution. 
    In particular, signed permutation groups with random biased signs and products of classical Weyl groups are investigated.
\end{abstract}

\maketitle

\section{Introduction}

The numbers of inversions and descents are two of the most important characteristics of permutations. On the symmetric group $S_n$ of permutations $\pi \negmedspace: \{1, \ldots, n\} \rightarrow \{1, \ldots, n\},$ an inversion is any tuple $(i,j)$ with $i < j,$ but $\pi(i) > \pi(j)$. A descent is any inversion of two adjacent numbers, that is, any $i$ with $\pi(i) > \pi(i+1)$. Upon drawing permutations uniformly at random, these permutation statistics become random variables that we denote by $X_\inv$ and $X_\des$. The study of stochastic properties of quantities such as the number of inversions and descents belongs to the field of statistical algebra and is the aim of this paper. 
The asymptotic distribution of $X_\inv$ and $X_\des$ has been investigated by several authors (see, e.g., \cite{bender1973central, chatterjee2017central, fulman2004stein, harper1967stirling, kahle2020counting, pitman1997probabilistic}), who showed that inversions and descents each satisfy a central limit theorem (CLT) and thus are asymptotically normal. We say that a family of real-valued random variables $X_1, X_2, \ldots$  (or their respective distributions) satisfies the CLT if 
\[
    \frac{X_n - \E(X_n)}{\Var(X_n)^{1/2}} \overset{\D}{\longrightarrow} \NN(0,1)\,, \qquad n\to \infty\,,
\]
where $\overset{\D}{\longrightarrow}$ denotes convergence in distribution. For the number of inversions or descents on finite Coxeter groups, there are several techniques and approaches toward the CLT. Chatterjee \& Diaconis provide an overview of proofs of the CLT for $X_\des$ on the family of symmetric groups in \cite[Section~3]{chatterjee2017central}. One approach is based on a representation of $X_\des$ via $m$-dependent variables. Other proofs of the asymptotic normality of $X_\des$ are based on the zeros of its generating function \cite{harper1967stirling, pitman1997probabilistic} or on certain regularity properties of the generating function, see \cite[Ex.~3.5 and 5.3]{bender1973central}. Stein's method of exchangeable pairs has been applied in \cite{conger2005refinement, fulman2004stein}, where the latter reference and \cite[Ex. 5.5]{bender1973central} also cover inversions. We note that Stein's method has been used in various settings, see, e.g., \cite{conger2007normal} for permutations on multisets, \cite{pike2011convergence} for generalized inversions, and \cite{arslan2018unfair} for non-uniformly random permutations. 

The work of Dörr \& Kahle~\cite{dorr2022extreme} was the first to provide extreme value theory for these classical permutation statistics. They showed that the numbers of inversions and descents are in the maximum domain of attraction of the standard Gumbel distribution, assuming a triangular array where the number of samples per row obeys an exponential upper bound.  

While the asymptotic normality of inversions and descents as \textit{univariate statistics} is well studied, the knowledge of their \textit{joint distribution} is comparatively sparse. The dependency structure of $(X_\inv, X_\des)^\top$ poses a challenge, and many methods such as, e.g., Janson's dependency criterion \cite[Theorem 2]{janson1988normal} are not applicable in the multivariate setting. Fang \& Röllin \cite{fang2015rates} gave a CLT for arbitrarily large collections of permutation statistics based on antisymmetric matrices, including $(X_\inv, X_\des)^\top$ as a special case. Our novel approach is to replace $X_\inv$ with its H\'{a}jek projection $\hat{X}_\inv$, giving an $m$-dependent approximation $(\hat{X}_\inv, X_\des)^\top$. Then, a recent Gaussian approximation result for $m$-dependent random vectors by Chang \textit{et al.} \cite{chang2021central} can be used to obtain the asymptotic extreme value limit behavior of $(X_\inv, X_\des)^\top$. 

Symmetric groups belong to the class of finite Coxeter groups, on which the concept of inversions and descents can be readily generalized, see \cite[Section~1.4]{bjorner2006combinatorics}. As in \cite{dorr2022extreme}, we state our findings in this broader framework. Besides the family $(S_n)_{n \in \N}$ of symmetric groups, we also focus on the \textit{signed permutation groups} $B_n$ and the \textit{even-signed permutation groups} $D_n$. For these three families,  we use the umbrella term \textit{classical Weyl groups} throughout. For some basics on finite Coxeter groups, we refer to \cite{bjorner2006combinatorics}. Kahle \& Stump \cite{kahle2020counting} developed a full characterization of CLTs for inversions and descents on sequences of finite Coxeter groups, by giving necessary and sufficient conditions on the asymptotics of $\Var(X_\inv)$ and $\Var(X_\des)$. Additionally, we now obtain the CLT of $(X_\inv, X_\des)^\top$ on all classical Weyl groups as well as their finite products. 

This paper is structured as follows. Section~\ref{section2} introduces the H\'{a}jek projection of inversions and descents on symmetric groups and justifies its use to approximate $X_\inv$. Section~\ref{section3} presents the corresponding extreme value limit theorem (EVLT) of $(X_\inv, X_\des)^\top$ as the main result of this paper. In Section~\ref{section5}, these results are extended to the larger groups $B_n$ and $D_n$. We also equip these with a new family of probability measures, namely the so-called $p$-biased signed permutations. Section~\ref{section8} concludes the paper with some open questions. The remaining technical proofs are gathered in Section~\ref{section9}. \par
\vspace{1mm}
Throughout this paper, we denote the standard uniform distribution on the interval $[0,1]$ by $U(0,1)$, and the discrete uniform distribution on the set $\{0, 1, \ldots, n\}$ by $U(\{0,1,\ldots, n\})$. We sometimes also use these notations for accordingly distributed random variables when the meaning is clear from the context. The symbol $\eid$ means equality in distribution and $\sum_{i<j}$ will be used as an abbreviation of the double-indexed sum $\sum_{1\leq i<j\leq n}$. For any random variable $X$, we denote its standard deviation by $\sigma(X) = \sqrt{\Var(X)}$. The correlation of two random variables $X,Y$ is $\rho(X,Y)$. Moreover, we use typical Landau notation for positive sequences $a_n, b_n$ as follows: 
\begin{itemize}
    \item $a_n = O(b_n)$ means that $a_n$ grows at most as fast as~$b_n$, i.e.,
    $\limsup_{n\rightarrow\infty} a_n/b_n < \infty$. 
    \item $a_n = o(b_n)$ means that $a_n$ grows slower than $b_n$, i.e.,   $\lim_{n\rightarrow\infty} a_n/b_n = 0$. This is also written as $a_n \ll b_n$ or $b_n \gg a_n$.
    \item $a_n = \Theta(b_n)$ means that $a_n$ and $b_n$ have the same order of magnitude, i.e., both $a_n = O(b_n)$ and $b_n = O(a_n)$ hold.
    \item $a_n = b_n + o_\PP(1)$ means that $a_n, b_n$ are sequences of random variables with $a_n - b_n \overset{\PP}{\longrightarrow} 0$.
\end{itemize}

\section{H\'{a}jek projections on symmetric groups} \label{section2} 

A permutation drawn uniformly at random from the symmetric group $S_n$ of permutations on $n$ letters is induced by the ranks of independent random variables $Z_1, \ldots, Z_n \sim U(0,1)$. Thus, the numbers of inversions and descents of this permutation can be represented by
\begin{align}
    X_\inv &= \sum_{1 \leq i < j \leq n} \textbf{1}\{Z_i > Z_j\}, \label{1.1} \\
    X_\des &= \sum_{i=1}^{n-1} \textbf{1}\{Z_i > Z_{i+1}\}. \label{1.2} 
\end{align}

The primary challenge in dealing with the joint permutation statistic $(X_\inv,X_\des)$ is the dependence structure between $X_\inv$ and $X_\des$. The random variables $\textbf{1}\{Z_i > Z_j\}, 1\le i<j\le n$ in \eqref{1.1} are also dependent. It is worth noting that $X_\inv$ has the following representation as a sum of $n-1$ independent terms:
\begin{equation}
    X_\inv \eid \sum_{i=1}^{n-1} U(\{0, 1, \ldots, i\})\,. \label{1.3}
\end{equation}
This follows, e.g., from \cite[Corollary 2.5a)]{dorr2022extreme} within the framework of all finite Coxeter groups. The representation of $X_\des$ in \eqref{1.2} has $m$-dependent variables (precisely, $m=1$). There is also a decomposition of $X_\des$ into independent summands, based on the splitting of its generating function (known as the \textit{Eulerian polynomial}) into linear factors of its real-valued roots \cite[Corollary 2.5b)]{dorr2022extreme}. 

From the representations in \cite[Corollary 2.5a)]{dorr2022extreme} and \cite[Corollary 2.5b)]{dorr2022extreme} it is easy to derive the generating functions of $X_\inv$ and $X_\des$. That $X_\inv$ and $X_\des$ are not independent follows from the fact that the joint generating function of $(X_\des, X_\inv)$ does not factor into those of $X_\inv$ and $X_\des$. It is an interesting question whether this polynomial factors at all. Some tests were made on $S_3$ or $S_4,$ but there was no regularity detected so far. 

To handle the dependence between $X_\inv$ and $X_\des$, we approximate $X_\inv$ through its H\'{a}jek projection $\hat{X}_\inv$ and write $(\hat{X}_\inv, X_\des)$ as a sum of $m$-dependent two-dimensional random vectors. 

\begin{definition} \label{def2.1}
For independent random variables $Z_1, \ldots, Z_n$ and any random variable $X_n$, the \textit{H\'{a}jek projection} of $X_n$ with respect to $Z_1, \ldots, Z_n$ is given by
\[\hat{X}_n := \sum_{k=1}^n \E(X_n \mid Z_k) - (n-1)\E(X_n).\]
\end{definition}
Note that $\E(\hat{X}_n)=\E(X_n)$. Since each $\E(X_n \mid Z_k)$ is a measurable function only in $Z_k$, the H\'{a}jek projection is a sum of independent random variables, regardless of the original dependence structure between $X_n$ and $Z_k$. In Sections~\ref{section2} and \ref{section3}, we always assume that $Z_1,\ldots,Z_n \sim U(0,1)$ for our purposes. To decide whether the H\'{a}jek projection is a sufficiently accurate approximation, the following criterion is useful.

\begin{theorem}[cf.~\cite{van2000asymptotic}, Theorem 11.2] \label{thm2.2}
Consider a sequence $(X_n)_{n\ge 1}$ of random variables and their associated H\'{a}jek projections $(\hat{X}_n)_{n\ge 1}$. If $\Var(\hat{X}_n) \sim \Var(X_n)$ as $n \rightarrow \infty,$ then
\[
    \frac{X_n - \E(X_n)}{\Var(X_n)^{1/2}} = \frac{\hat{X}_n - \E(\hat{X}_n)}{\Var(\hat{X}_n)^{1/2}} + o_{\PP}(1).
\]
\end{theorem}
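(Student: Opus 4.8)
The plan is to establish the $L^2$-convergence statement $(X_n - \E X_n)/\sigma(X_n) - (\hat X_n - \E \hat X_n)/\sigma(\hat X_n) \to 0$ in probability by showing that the normalized difference has variance tending to zero, which by Chebyshev's inequality yields the $o_\PP(1)$ claim. The key algebraic fact is that the H\'ajek projection $\hat X_n$ satisfies $\E\big((X_n - \hat X_n)\hat X_n\big) = 0$, i.e. the ``residual'' $X_n - \hat X_n$ is orthogonal to $\hat X_n$ in $L^2$; this is the standard projection property and follows from $\E\big((X_n - \hat X_n)\E(X_n \mid Z_k)\big) = 0$ for each $k$, which in turn is a direct computation using $\E\big(\E(X_n\mid Z_k)\mid Z_k\big) = \E(X_n\mid Z_k)$ and independence of the $Z_k$. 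From orthogonality we get the Pythagorean identity $\Var(X_n) = \Var(\hat X_n) + \Var(X_n - \hat X_n)$, so the hypothesis $\Var(\hat X_n) \sim \Var(X_n)$ is equivalent to $\Var(X_n - \hat X_n) = o(\Var(X_n))$.

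Next I would control the difference of the two normalized quantities directly. Write $\tilde X_n := X_n - \E X_n$ and $\tilde{\hat X}_n := \hat X_n - \E \hat X_n$, noting $\E \hat X_n = \E X_n$ so these centered versions differ by exactly $X_n - \hat X_n$. Then
\[
\frac{\tilde X_n}{\sigma(X_n)} - \frac{\tilde{\hat X}_n}{\sigma(\hat X_n)}
= \frac{\tilde X_n - \tilde{\hat X}_n}{\sigma(X_n)} + \tilde{\hat X}_n\left(\frac{1}{\sigma(X_n)} - \frac{1}{\sigma(\hat X_n)}\right).
\]
The first term has variance $\Var(X_n - \hat X_n)/\Var(X_n) = o(1)$ by the Pythagorean identity and the hypothesis. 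For the second term, $\tilde{\hat X}_n/\sigma(\hat X_n)$ has variance $1$, and the scalar prefactor $\sigma(\hat X_n)\big(1/\sigma(X_n) - 1/\sigma(\hat X_n)\big) = \sigma(\hat X_n)/\sigma(X_n) - 1 \to 0$ because $\sigma(\hat X_n)/\sigma(X_n) = \sqrt{\Var(\hat X_n)/\Var(X_n)} \to 1$ by hypothesis. Hence the second term also converges to $0$ in $L^2$, and therefore in probability. Adding the two pieces via the triangle inequality (or Slutsky) finishes the argument.

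The only mild subtlety worth flagging is that all of this presupposes $\Var(X_n) > 0$ (so that the normalization is well-defined) and, for the second term, $\Var(\hat X_n) > 0$ for $n$ large; both are harmless here since $\Var(\hat X_n) \sim \Var(X_n)$ and in the intended application $\Var(X_\inv) \to \infty$. I do not anticipate a genuine obstacle: the content is entirely the orthogonality of the H\'ajek residual plus a two-line Chebyshev estimate, and the statement is essentially Theorem~11.2 of \cite{van2000asymptotic} quoted verbatim. If one wanted the cleanest exposition, the single step deserving care is the verification of $\E\big((X_n-\hat X_n)\hat X_n\big)=0$, since it is the one place the precise definition of $\hat X_n$ (in particular the $-(n-1)\E X_n$ correction term) is used.
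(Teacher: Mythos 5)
Your proof is correct and is essentially the argument behind the result the paper cites (van der Vaart, Theorem 11.2), which the paper itself does not reprove: everything rests on the orthogonality $\Cov(X_n-\hat X_n,\hat X_n)=0$, hence $\Cov(X_n,\hat X_n)=\Var(\hat X_n)$, followed by a Chebyshev estimate on the standardized difference. Your two-term splitting is a harmless cosmetic variant of the one-line variance computation $\Var\bigl(\tilde X_n/\sigma(X_n)-\tilde{\hat X}_n/\sigma(\hat X_n)\bigr)=2-2\sigma(\hat X_n)/\sigma(X_n)\to 0$.
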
 
In particular, if $\Var(\hat{X}_n) \sim \Var(X_n)$ and $(\hat{X}_n)_{n\ge 1}$ satisfies a CLT, then Theorem~\ref{thm2.2} guarantees that $(X_n)_{n\ge 1}$ also satisfies a CLT.

In what follows, for a random variable or vector $X$ with finite variance, we write $Y$ for its standardization, that is, $Y=\bigl(X-\E(X)\bigr)/\sqrt{\Var(X)}$. In particular, $Y_\inv$ is the standardization of $X_\inv$ and $\hat{Y}_\inv$ is that of $\hat{X}_\inv$. We use the symbols $X_\inv, Y_\inv, \hat{X}_\inv, \hat{Y}_\inv, X_\des, Y_\des$ with suppression of $n$, the underlying symmetric group or its rank, unless needed for clarification.
\smallskip

The next result provides the \Hajek\ projection of $X_\inv$ defined in \eqref{1.1} and verifies the variance equivalence condition stated in Theorem~\ref{thm2.2} for $X_\inv$.

\begin{lemma} \label{lemma2.4}
The H\'{a}jek projection $\hat{X}_\inv$ of $X_\inv$ is given by 
\[\hat{X}_\inv = \frac{n(n-1)}{4} + \sum_{k=1}^n (n-2k+1)Z_k\]
and it holds that $\Var(X_\inv) \sim \Var(\hat{X}_\inv)$ as $n\to \infty$.
\end{lemma}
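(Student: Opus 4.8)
The plan is to read off the H\'ajek projection directly from Definition~\ref{def2.1} by computing each conditional expectation $\E(X_\inv\mid Z_k)$, and then to evaluate both variances in closed form and compare leading terms.

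First I would compute $\E(X_\inv\mid Z_k)$ for fixed $k$. Writing $X_\inv=\sum_{i<j}\mathbf 1\{Z_i>Z_j\}$ as in \eqref{1.1} and splitting the pairs $(i,j)$ with $1\le i<j\le n$ into three groups according to whether $k\notin\{i,j\}$, $k=i$, or $k=j$: a pair with $k\notin\{i,j\}$ contributes $\E\,\mathbf 1\{Z_i>Z_j\}=\tfrac12$ by independence of $Z_k$ from $Z_i,Z_j$; a pair $(k,j)$ with $j>k$ contributes $\E(\mathbf 1\{Z_k>Z_j\}\mid Z_k)=Z_k$; a pair $(i,k)$ with $i<k$ contributes $\E(\mathbf 1\{Z_i>Z_k\}\mid Z_k)=1-Z_k$. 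There are $n-k$ pairs of the second type, $k-1$ of the third, and $\binom n2-(n-1)=\binom{n-1}{2}$ of the first. Collecting the $Z_k$-terms gives the coefficient $(n-k)-(k-1)=n-2k+1$, while $\E(X_\inv)=\tfrac12\binom n2=\tfrac{n(n-1)}{4}$. Summing $\E(X_\inv\mid Z_k)$ over $k=1,\dots,n$ and subtracting $(n-1)\E(X_\inv)$ as prescribed in Definition~\ref{def2.1}, the constants collapse (using $\sum_{k=1}^n(k-1)=\binom n2$) to leave exactly $\tfrac{n(n-1)}{4}$, which yields the claimed formula. A convenient sanity check is $\E(\hat X_\inv)=\E(X_\inv)$, valid because $\sum_{k=1}^n(n-2k+1)=0$.

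For the variance equivalence I would compute both sides exactly. Since $\hat X_\inv$ is an affine function of the independent variables $Z_k$ with $\Var(Z_k)=\tfrac1{12}$, we get $\Var(\hat X_\inv)=\tfrac1{12}\sum_{k=1}^n(n-2k+1)^2$; evaluating this power sum (it equals $\tfrac{n(n^2-1)}{3}$) gives $\Var(\hat X_\inv)=\tfrac{n(n^2-1)}{36}\sim\tfrac{n^3}{36}$. For $\Var(X_\inv)$ I would invoke the independent-summand representation \eqref{1.3}, so that $\Var(X_\inv)=\sum_{i=1}^{n-1}\Var\bigl(U(\{0,\dots,i\})\bigr)=\sum_{i=1}^{n-1}\tfrac{(i+1)^2-1}{12}=\tfrac{n(n-1)(2n+5)}{72}\sim\tfrac{n^3}{36}$. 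Comparing, $\Var(X_\inv)/\Var(\hat X_\inv)=\tfrac{2n+5}{2(n+1)}\to1$, which is the assertion.

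None of the individual steps is deep; the one place that demands care is the bookkeeping of the constant terms in the H\'ajek projection, where the pieces $\sum_{k=1}^n(k-1)=\binom n2$ and $\tfrac n2\bigl(\binom n2-(n-1)\bigr)=\tfrac{n(n-1)(n-2)}{4}$ must cancel against $(n-1)\E(X_\inv)=\tfrac{n(n-1)^2}{4}$ down to $\tfrac{n(n-1)}{4}$. If one preferred not to use \eqref{1.3}, $\Var(X_\inv)$ could instead be obtained by a direct second-moment computation, classifying pairs of indicators $\mathbf 1\{Z_i>Z_j\}$, $\mathbf 1\{Z_k>Z_\ell\}$ by $|\{i,j\}\cap\{k,\ell\}|$ and using $\E\bigl(\mathbf 1\{Z_a>Z_b\}\mathbf 1\{Z_a>Z_c\}\bigr)=\tfrac13$ for distinct $a,b,c$; this is routine but longer, so appealing to \eqref{1.3} is preferable.
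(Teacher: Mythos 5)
Your proof is correct and follows essentially the same route as the paper: the case analysis for $\E(X_\inv\mid Z_k)$, the collapse of the constant terms to $\tfrac{n(n-1)}{4}$, and the computation $\Var(\hat X_\inv)=\tfrac{1}{12}\sum_{k=1}^n(n-2k+1)^2=\tfrac{n^3-n}{36}$ all coincide with the paper's argument. The only (harmless) difference is that you derive $\Var(X_\inv)=\tfrac{n(n-1)(2n+5)}{72}$ from the independent-summand representation \eqref{1.3} instead of citing \cite[Corollary~3.2]{kahle2020counting}; your closed form is the standard one for $S_n$ on $n$ letters, has the same leading term $n^3/36$, and yields the asserted asymptotic equivalence identically.
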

\begin{figure}[t]
    \centering
    \includegraphics[scale=1.2]{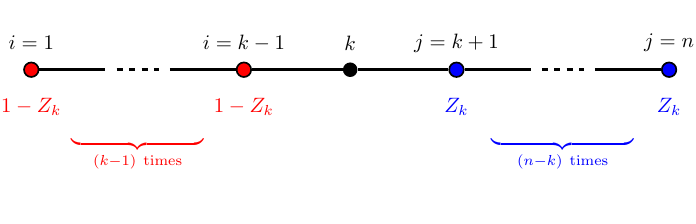}  \vspace{-7.5mm}
    \caption{Display of the non-constant contributions to $\E(X_\inv \mid Z_k)$}
    \label{fig1}
\end{figure}

\begin{proof} 
We first study the conditional expectations $\E(X_\inv \mid Z_k)$ for $k = 1, \ldots, n$ and get
\[
    \E(X_\inv \mid Z_k) = \sum_{1 \leq i<j \leq n} \PP(Z_i > Z_j \mid Z_k) = \sum_{1 \leq i<j \leq n} \begin{cases} 1/2, & \text{ if } k \notin \{i,j\}, \\ Z_k, & \text{ if } k=i, \\ 1 - Z_k, & \text{ if } k=j. \end{cases} 
\]
We fix $k \in \{1, \ldots, n\}$ and analyze the frequency of the three cases on the right-hand side. As $\{1, \ldots, n\} \setminus \{k\}$ has cardinality $n-1,$ there are $\binom{n-1}{2}$ subsets $\{i,j\} \subseteq \{1, \ldots, n\} \setminus \{k\}$. In case of $i=k,$ there are $n-k$ indices $j$ with $j > k,$ for which we have $\PP(Z_k > Z_j \mid Z_k) = \PP(Z_j < Z_k \mid Z_k) = Z_k$ since $Z_k \sim U(0,1)$. Likewise, in case of $j=k,$ there are $k-1$ indices $i$ with $i < k,$ which gives $\PP(Z_i > Z_k \mid Z_k) = 1 - Z_k$. These contributions are illustrated in Figure~\ref{fig1}. Therefore, we obtain
\begin{align*}
    \E(X_\inv \mid Z_k) &= \frac{1}{2}\binom{n-1}{2} + (n-k)Z_k + (k-1)(1-Z_k) \\
    &= \frac{1}{2}\binom{n-1}{2} + (n-2k+1)Z_k + (k-1),
\end{align*}
from which we deduce that
\begin{align}
    \hat{X}_\inv &= \sum_{k=1}^n \E(X_\inv \mid Z_k) - (n-1)\E(X_\inv) \nonumber \\
    &= \frac{n}{2}\binom{n-1}{2} + \sum_{k=1}^n (n-2k+1)Z_k + \sum_{k=1}^n (k-1) - \frac{n-1}{2}\binom{n}{2} \nonumber \\
    &= \frac{n(n-1)}{4} + \sum_{k=1}^n (n-2k+1)Z_k. \label{5.1}
\end{align}

\noindent Since the $Z_k$'s are i.i.d.\,, the variance of the H\'{a}jek projection is  
$$\Var(\hat{X}_\inv) = \sum_{k=1}^n \Var((n - 2k + 1)Z_k).$$
Due to $\Var(Z_k) = 1/12,$ we get
\begin{align*}
    \Var(\hat{X}_\inv) &= \frac{1}{12}\sum_{k=1}^n (2k - n - 1)^2 
    = \frac{1}{12} \sum_{k=1}^n \bigl(4k^2 + (n+1)^2 - 4k(n+1)\bigr) \\
    &= \frac{1}{12} \left(4\sum_{k=1}^n k^2 + n(n+1)^2 - 4(n+1)\frac{n(n+1)}{2}\right) \\
    &= \frac{1}{12} \left(4\frac{n(n+1)(2n+1)}{6}-n(n+1)^2 \right) = \frac{1}{36}n^3  -\frac{n}{36}\,, \qquad n\to \infty.
\end{align*}
By \cite[Corollary 3.2]{kahle2020counting}, we have $\Var(X_\inv) = \displaystyle{\frac{1}{36} n^3 +\frac{9n^2+7n}{72}}$ and therefore $\Var(X_\inv) \sim \Var(\hat{X}_\inv)$ as $n\to \infty$.
\end{proof}

A combination of Theorem~\ref{thm2.2} and Lemma~\ref{lemma2.4} yields 
$$Y_\inv = \hat{Y}_\inv + o_{\PP}(1)\,, \qquad n\to \infty\,.$$

\begin{remark} \label{rem2.5}
Interestingly, this approach fails for $X_\des,$ since $\Var(X_\des) \sim \Var(\hat{X}_\des)$ does not hold. 
Repeating the considerations in the proof of Lemma~\ref{lemma2.4} for $X_\des,$ we first obtain
\[
    \E(X_\des \mid Z_k) = \sum_{i=1}^{n-1} \PP(Z_i > Z_{i+1} \mid Z_k) = \sum_{i=1}^{n-1} \begin{cases} 1/2, & k \notin \{i,i+1\}, \\ Z_k, & k=i, \\ 1 - Z_k, & k=i+1. \end{cases}
\]
Now, except for the boundary cases $k=1$ and $k=n,$ the summands for $k=i$ and $k=i+1$ are each used exactly once, so the $Z_k$ in their sum $Z_k+(1 - Z_k)$ cancel out. In total, we obtain 
\[
    \hat{X}_\des = Z_1 - Z_n + c_n\,,
\]
where $c_n$ is some constant that depends only on $n$. Therefore, $ \Var(\hat{X}_\des) = 2/12$ does not have the linear order of $\Var(X_{\des}) = (n+1)/12$ (see \cite[Corollary 4.2]{kahle2020counting}). 
\end{remark}
For these reasons, our results will be based on the following consequence of Theorem~\ref{thm2.2} and Lemma~\ref{lemma2.4}.

\begin{corollary} \label{cor2.5} 
Let $(X_\inv, X_\des)^\top$ be given from the symmetric group $S_n$. For the standardized random vector $(Y_\inv, Y_\des)^\top$ and the standardized H\'{a}jek projection $\hat{Y}_\inv$, we have
\[
    \begin{pmatrix} Y_\inv \\ Y_\des \end{pmatrix} = \begin{pmatrix} \hat{Y}_\inv + o_{\PP}(1) \\ Y_\des \end{pmatrix}.
\]
A decomposition of $(\hat{X}_\inv, X_\des)^\top$ into $1$-dependent summands is given by
\begin{align}\label{eq:1}
\begin{pmatrix} \hat{X}_\inv  \\ X_\des  \end{pmatrix} &= \sum_{k=1}^{n-1} \begin{pmatrix} (n-2k+1)Z_k \\ \mathbf{1}\{Z_k > Z_{k+1}\} \end{pmatrix} +  \begin{pmatrix} -(n-1)Z_n+\frac{n(n-1)}{4} \\ 0 \end{pmatrix}.
\end{align}
Likewise, a $1$-dependent decomposition for $(\hat{Y}_\inv, Y_\des)^\top$ can be found by standardization.
\end{corollary}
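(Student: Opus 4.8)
The plan is to obtain all three assertions by bookkeeping on results already in hand; no new probabilistic input is needed. For the first display I would simply combine Theorem~\ref{thm2.2} with the variance equivalence $\Var(X_\inv)\sim\Var(\hat X_\inv)$ established in Lemma~\ref{lemma2.4}, which gives $Y_\inv=\hat Y_\inv+o_{\PP}(1)$; the second coordinate is the trivial identity $Y_\des=Y_\des$, and stacking the two coordinates yields the claim.

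For the decomposition \eqref{eq:1} I would start from the closed form \eqref{5.1}, $\hat X_\inv=\frac{n(n-1)}{4}+\sum_{k=1}^n(n-2k+1)Z_k$, and split off the $k=n$ summand, whose coefficient is $n-2n+1=-(n-1)$; this rewrites $\hat X_\inv$ as $\sum_{k=1}^{n-1}(n-2k+1)Z_k$ plus the boundary term $\frac{n(n-1)}{4}-(n-1)Z_n$. For descents, \eqref{1.2} already has the form $\sum_{k=1}^{n-1}\mathbf 1\{Z_k>Z_{k+1}\}$, to which a zero boundary term is appended, so \eqref{eq:1} holds coordinatewise. It then remains to check $1$-dependence of the summand vectors. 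Writing $V_k:=\bigl((n-2k+1)Z_k,\ \mathbf 1\{Z_k>Z_{k+1}\}\bigr)^\top$ for $1\le k\le n-1$ and $V_n:=\bigl(\frac{n(n-1)}{4}-(n-1)Z_n,\ 0\bigr)^\top$, each $V_k$ is a measurable function of the index block $(Z_k,Z_{k+1})$ (and $V_n$ of $Z_n$ alone), up to an additive constant that is irrelevant for the dependence structure. Since $Z_1,\dots,Z_n$ are independent, $V_k$ and $V_\ell$ are independent as soon as these blocks are disjoint, i.e.\ whenever $|k-\ell|\ge2$; hence $(V_k)_{1\le k\le n}$ is $1$-dependent.

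For the standardized version I would transform each $V_k$ by the coordinatewise affine map that subtracts $\E V_k$ and divides the first and second coordinates by $\sigma(\hat X_\inv)$ and $\sigma(X_\des)$ respectively, calling the result $W_k$. Because the coordinatewise sums of the centering constants equal $\E\hat X_\inv$ and $\E X_\des$, summing the $W_k$ telescopes exactly to $(\hat Y_\inv,Y_\des)^\top$; and since an affine coordinatewise map preserves both measurability with respect to $(Z_k,Z_{k+1})$ and the vanishing of covariances at lag $\ge2$, the sequence $(W_k)$ is again $1$-dependent, which is the last assertion. I do not expect any genuine obstacle here: the only thing requiring care is verifying that the centering constants of the summands add up to the true means so that the telescoping is exact, and confirming that carrying the $Z_n$-term as a separate boundary summand (rather than folding it into $V_{n-1}$) does not break $1$-dependence — both being immediate consequences of the block structure of the indices.
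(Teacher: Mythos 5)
Your proposal is correct and follows exactly the route the paper intends: the first display is the already-stated consequence of Theorem~\ref{thm2.2} and Lemma~\ref{lemma2.4}, and the decomposition is obtained by splitting off the $k=n$ term of \eqref{5.1} and observing that each summand is a function of the block $(Z_k,Z_{k+1})$, giving $1$-dependence. The only cosmetic point is that $1$-dependence is a statement about independence at lag $\ge 2$ rather than merely vanishing covariances, but your block-measurability argument already delivers the stronger property.
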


It is worth noting that the correlation $\rho(X_\inv, X_\des)$ is not zero. However, we now show that $\rho(X_\inv, X_\des)\to 0$ as $n\to \infty$. Moreover, by Corollary~\ref{cor2.5} the same holds true for $(\hat{X}_\inv, X_\des)^{\top}$ as well (in fact, $\rho(\hat{X}_\inv, X_\des)$ is even easier to compute). To proceed, we need the covariance of $\hat{X}_\inv$ and $X_\des$. Our next result additionally provides $\Cov(X_\inv, X_\des)$, which -- to the best of our knowledge -- is not available in the literature.  

\begin{lemma}[see Subsections~\ref{sec9.1}, ~\ref{sec9.2} for the proof] \label{lemma2.6}
On the symmetric group $S_n$, we have 
\begin{enumerate}
\item[\emph{a)}] $\Cov(X_\inv, X_\des) = (n-1)/4$.
\item[\emph{b)}] $\Cov(\hat{X}_\inv, X_\des)=(n-1)/6$.
\end{enumerate}
\end{lemma}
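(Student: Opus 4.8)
The plan is to compute both covariances by expanding the relevant statistics as sums of indicator (or simple) random variables and using bilinearity of $\Cov$, reducing everything to a finite list of elementary joint expectations.

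For part b), I would start from the explicit formula for the Hájek projection in Lemma~\ref{lemma2.4}, namely $\hat{X}_\inv = \tfrac{n(n-1)}{4} + \sum_{k=1}^n (n-2k+1)Z_k$, and from the representation $X_\des = \sum_{i=1}^{n-1}\mathbf{1}\{Z_i > Z_{i+1}\}$ in \eqref{1.2}. Since the additive constant does not affect the covariance, bilinearity gives
\[
\Cov(\hat{X}_\inv, X_\des) = \sum_{k=1}^n \sum_{i=1}^{n-1} (n-2k+1)\,\Cov\bigl(Z_k,\ \mathbf{1}\{Z_i > Z_{i+1}\}\bigr).
\]
By independence of the $Z_j$, the inner covariance vanishes unless $k \in \{i, i+1\}$; for $k=i$ one gets $\Cov(Z_i, \mathbf{1}\{Z_i>Z_{i+1}\})$ and for $k=i+1$ one gets $\Cov(Z_{i+1}, \mathbf{1}\{Z_i>Z_{i+1}\})$, and a short direct integration over $U(0,1)^2$ evaluates these (they are $+\tfrac1{12}$ and $-\tfrac1{12}$ respectively, by symmetry of the two roles of $Z_i$ around $\tfrac12$). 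Substituting and collecting the coefficients $(n-2k+1)$ over the telescoping pattern in $k$ should leave a single clean linear sum collapsing to $(n-1)/6$; I expect the border terms $k=1$ and $k=n$ to be the only place one must be careful, exactly as in Remark~\ref{rem2.5}.

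For part a), the cleaner route is to avoid the fully dependent indicators in \eqref{1.1} and instead use $\Cov(X_\inv, X_\des) = \Cov(\hat{X}_\inv, X_\des) + \Cov(X_\inv - \hat{X}_\inv, X_\des)$. Since $X_\inv - \hat{X}_\inv$ is the "non-projected" part, one hopes this correction is tractable; alternatively, and perhaps more robustly, I would compute $\Cov(X_\inv, X_\des)$ directly from $X_\inv = \sum_{i<j}\mathbf{1}\{Z_i>Z_j\}$ and $X_\des = \sum_{\ell}\mathbf{1}\{Z_\ell > Z_{\ell+1}\}$ by bilinearity, so that it becomes a double sum of terms $\Cov(\mathbf{1}\{Z_i>Z_j\}, \mathbf{1}\{Z_\ell>Z_{\ell+1}\})$. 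Such a covariance is nonzero only when the index sets $\{i,j\}$ and $\{\ell,\ell+1\}$ overlap, which forces $\{\ell,\ell+1\}\subseteq\{i,j\}$ or $|\{i,j\}\cap\{\ell,\ell+1\}|=1$; in each of the finitely many configurations the joint expectation of two such indicators built from i.i.d.\ uniforms is a rational number computable by a short integral (covering the cases of two, three, or four distinct $Z$'s involved). Counting how many $(\ell; i<j)$ pairs fall into each configuration gives polynomial-in-$n$ multiplicities, and summing yields the stated $(n-1)/4$.

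The main obstacle is purely bookkeeping rather than conceptual: in part a) one must enumerate the overlap patterns between a descent pair $\{\ell,\ell+1\}$ and an inversion pair $\{i,j\}$ without double counting, handle the orientation of the inequalities correctly (an inversion pair is ordered $i<j$, a descent pair is always of the form $\{\ell,\ell+1\}$, so "which variable is larger" matters), and correctly tally the $O(n)$ versus $O(1)$ multiplicities so that the quadratic and higher-order terms cancel and only the linear term $(n-1)/4$ survives. A useful sanity check at the end is the consistency relation $\Cov(X_\inv, X_\des)\big/\Cov(\hat{X}_\inv,X_\des) \to \tfrac{3}{2}$, and one can verify both formulas against a brute-force computation on $S_3$ and $S_4$, as the authors mention doing for the generating function.
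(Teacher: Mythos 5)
Your overall strategy coincides with the paper's: both covariances are computed by bilinearity after expanding $X_\des$ into its descent indicators. For part b) your argument is complete and correct, and in fact slightly cleaner than the paper's: grouping by the descent index $i$ and using $\Cov(Z_i,\mathbf{1}\{Z_i>Z_{i+1}\})=+\tfrac1{12}$, $\Cov(Z_{i+1},\mathbf{1}\{Z_i>Z_{i+1}\})=-\tfrac1{12}$ gives a contribution $\tfrac1{12}\bigl((n-2i+1)-(n-2i-1)\bigr)=\tfrac16$ per descent, hence $(n-1)/6$ with no separate treatment of border terms; the paper instead groups by the index of $Z_j$, shows the interior terms cancel, and extracts the answer from $j=1$ and $j=n$.

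For part a) your plan is the paper's plan, but the substantive content is still missing. The whole point of the computation is that for a generic pair $i<j$ (with $1<i$, $j<n$, $j>i+1$) the four descent positions $\ell\in\{i-1,i,j-1,j\}$ contribute covariances $-\tfrac1{12},+\tfrac1{12},+\tfrac1{12},-\tfrac1{12}$ (via $\P(Z_{i-1}>Z_i>Z_j)=\tfrac16$ and $\P(Z_i=\max\{Z_i,Z_{i+1},Z_j\})=\tfrac13$), so they cancel \emph{exactly} and the generic pairs contribute nothing. The entire value $(n-1)/4$ then comes from the exceptional configurations: $j=i+1$ (where the descent indicator coincides with the inversion indicator and contributes a variance $\tfrac14$), $i=1$, $j=n$, and their intersections, each with its own multiplicity. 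Saying that one "counts how many pairs fall into each configuration" and that the higher-order terms cancel is an accurate forecast, but it is precisely this enumeration and the evaluation of each exceptional covariance that constitutes the proof; as written, part a) is a correct outline rather than a proof. Your sanity check $\Cov(X_\inv,X_\des)/\Cov(\hat X_\inv,X_\des)\to\tfrac32$ is consistent with the stated formulas.
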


\begin{corollary} \label{cor2.7}
Since $\Var(X_\inv)\Var(X_\des) = \Theta(n^4)$ according to \cite[Corollaries 3.2 and 4.2]{kahle2020counting}\emph{,} and the same holds true if $\Var(X_\inv)$ is replaced by $\Var(\hat{X}_\inv),$ we conclude from Lemma \ref{lemma2.6} that 
\begin{align*}
    \rho(X_\inv, X_\des) &= \dfrac{\Cov(X_\inv, X_\des)}{\sqrt{\Var(X_\inv)\Var(X_\des)}} = \Theta(1/n)\,, \\
    \rho(\hat{X}_\inv, X_\des) &= \Theta(1/n)\,, \qquad n \rightarrow \infty.
\end{align*}
\end{corollary}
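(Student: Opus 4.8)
Corollary~\ref{cor2.7} is immediate once Lemma~\ref{lemma2.6} is available: one just substitutes $\Cov(X_\inv,X_\des)=(n-1)/4$ and $\Cov(\hat X_\inv,X_\des)=(n-1)/6$ together with the variances $\Var(X_\inv)=\tfrac1{36}n^3+O(n^2)$, $\Var(\hat X_\inv)=\tfrac1{36}n^3-\tfrac n{36}$ (from Lemma~\ref{lemma2.4}) and $\Var(X_\des)=(n+1)/12$ into the definition of the correlation coefficient and reads off $\Theta(n)/\sqrt{\Theta(n^3)\,\Theta(n)}=\Theta(1/n)$. So the real content is Lemma~\ref{lemma2.6}. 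For both parts I would first use bilinearity of $\Cov$ and $X_\des=\sum_{i=1}^{n-1}\mathbf{1}\{Z_i>Z_{i+1}\}$ to reduce to a single adjacent pair,
\[
\Cov(X_\inv,X_\des)=\sum_{i=1}^{n-1}\Cov\bigl(X_\inv,\mathbf{1}\{Z_i>Z_{i+1}\}\bigr),\qquad \Cov(\hat X_\inv,X_\des)=\sum_{i=1}^{n-1}\Cov\bigl(\hat X_\inv,\mathbf{1}\{Z_i>Z_{i+1}\}\bigr),
\]
so that it suffices to prove each summand equals $1/4$, respectively $1/6$, with $n$ entering only through the count $n-1$ of summands.

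Part~b) is the easier half, because by Lemma~\ref{lemma2.4} $\hat X_\inv$ is affine in $Z_1,\dots,Z_n$: the additive constant drops out of the covariance and $\Cov(Z_k,\mathbf{1}\{Z_i>Z_{i+1}\})=0$ whenever $k\notin\{i,i+1\}$ by independence. The two surviving elementary covariances are obtained by conditioning on the relevant $Z_j\sim U(0,1)$: $\E[Z_i\mathbf{1}\{Z_i>Z_{i+1}\}]=\E[Z_i^2]=\tfrac13$ and $\E[Z_{i+1}\mathbf{1}\{Z_i>Z_{i+1}\}]=\E[Z_{i+1}-Z_{i+1}^2]=\tfrac16$, hence $\Cov(Z_i,\mathbf{1}\{Z_i>Z_{i+1}\})=\tfrac1{12}$ and $\Cov(Z_{i+1},\mathbf{1}\{Z_i>Z_{i+1}\})=-\tfrac1{12}$. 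Weighting these with the coefficients $n-2i+1$ and $n-2i-1$ of $Z_i$ and $Z_{i+1}$ in $\hat X_\inv$ and adding, the $i$-dependence cancels: each summand equals $\tfrac1{12}\bigl((n-2i+1)-(n-2i-1)\bigr)=\tfrac16$, and the sum is $(n-1)/6$.

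For Part~a) I would avoid the full double sum and use a symmetrization argument on each adjacent pair. Fix $i$ and let $\tau$ be the transposition of the coordinates $Z_i$ and $Z_{i+1}$; it is measure-preserving and interchanges $A_i:=\{Z_i>Z_{i+1}\}$ with $A_i^c$. The key observation is that applying $\tau$ changes $X_\inv$ only through the single pair $(i,i+1)$: for every $k\notin\{i,i+1\}$ the two inversion indicators linking $k$ to $\{i,i+1\}$ are merely swapped with one another, so their sum is unaffected, and there is no index strictly between $i$ and $i+1$. Thus (up to the null event of ties) $X_\inv(\tau Z)=X_\inv(Z)+1-2\,\mathbf{1}\{Z_i>Z_{i+1}\}$, which on $A_i$ reads $X_\inv(\tau Z)=X_\inv(Z)-1$. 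Since $\tau$ maps $A_i$ bijectively onto $A_i^c$ and $\PP(A_i)=\PP(A_i^c)=1/2$, a change of variables gives $\E[X_\inv\mid A_i^c]=\E[X_\inv\mid A_i]-1$, hence $\E[X_\inv\mid A_i]=\E[X_\inv]+\tfrac12$, and therefore $\Cov\bigl(X_\inv,\mathbf{1}\{A_i\}\bigr)=\PP(A_i)\,\E[X_\inv\mid A_i]-\tfrac12\E[X_\inv]=\tfrac14$; summing over $i$ yields $(n-1)/4$.

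I do not expect a genuine obstacle here; the one spot that needs a clean (though short) justification is the claim in Part~a) that swapping $Z_i\leftrightarrow Z_{i+1}$ leaves every inversion indicator other than the $(i,i+1)$-one either fixed or paired off against a compensating one. As a self-checking alternative I would, if desired, instead expand $\Cov(X_\inv,X_\des)$ directly over all pairs $\{a,b\}$ against all adjacent pairs $\{i,i+1\}$ and classify by $|\{a,b\}\cap\{i,i+1\}|$: disjoint pairs contribute $0$; the four ``exactly one common index'' configurations each contribute $\pm\tfrac1{12}$ and cancel in pairs for every fixed $i$; and the only surviving term is the variance $\tfrac14$ from $\{a,b\}=\{i,i+1\}$, again totalling $(n-1)/4$ and serving as a consistency check against Part~b).
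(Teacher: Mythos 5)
Your proposal is correct, and for the substantive part (Lemma~\ref{lemma2.6}a)) it takes a genuinely different and arguably cleaner route than the paper. The corollary itself is, as you say, just arithmetic with orders of magnitude once the covariances are known, and your reduction $\Theta(n)/\sqrt{\Theta(n^3)\Theta(n)}=\Theta(1/n)$ is exactly what the paper intends. For part a), the paper (Subsection~\ref{sec9.1}) expands the double sum over all pairs $(i,j)$ against all adjacent pairs $(k,k+1)$, classifies the overlaps into four types with elementary covariances $\pm 1/12$ computed from orderings of three independent uniforms, and then tracks six exceptional boundary/collision cases (E1)--(E6) whose contributions assemble to $(n-1)/4$; your alternative ``consistency check'' at the end is essentially this argument. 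Your main argument instead exploits the measure-preserving transposition $Z_i\leftrightarrow Z_{i+1}$, and the key observation --- that adjacency ensures no index lies strictly between $i$ and $i+1$, so all cross-indicators pair off and only the $(i,i+1)$ indicator flips --- is correctly identified and correctly used; it yields $\Cov(X_\inv,\mathbf{1}\{Z_i>Z_{i+1}\})=1/4$ uniformly in $i$ and avoids the exceptional-case bookkeeping entirely, at the cost of being specific to the symmetry of the uniform model (the paper's brute-force classification transfers more directly to the $p$-biased setting of Lemma~\ref{lemma5.6}). For part b), your computation is essentially the paper's (Subsection~\ref{sec9.2}) reorganized: the paper telescopes over the index $j$ of $Z_j$ so that only the boundary terms $j\in\{1,n\}$ survive, each contributing $(n-1)/12$, whereas you group by descent pair $i$ and observe that the coefficients $(n-2i+1)$ and $(n-2i-1)$ differ by $2$, so each pair contributes $2/12=1/6$; both are correct and give $(n-1)/6$.
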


\section{The bivariate Extreme Value Limit Theorem} \label{section3}

In this section, we establish the extreme value limit behavior of $(X_\inv, X_\des)^{\top}$ by using the $1$-dependent decomposition of $(\hat{X}_\inv, X_\des)^{\top}$ and applying a recent (and quite optimized) CLT for $m$-dependent triangular arrays from Chang \textit{et al.} \cite{chang2021central}. Their work provides Gaussian approximations for high-dimensional data under various dependency frameworks, including $m$-dependence. It gives error rates over the system of all hyperrectangles, including the Kolmogorov distance as a special case. Moreover, the high-dimensional framework implicitly covers the finite-dimensional one by repeating the components of a vector. 

Refining the notation of \cite{chang2021central}, we consider triangular arrays $(X_t^{(n)})_{t=1, \ldots, n}$ whose entries $X_1^{(n)}, \ldots, X_n^{(n)}$ are mean zero random vectors in $\R^\mathfrak{p}$, where $\mathfrak{p} = \mathfrak{p}(n)$ can grow with respect to $n$. For the sum 
\begin{equation}
    X^{(n)} := \sum_{t=1}^n X_t^{(n)} \quad \text{ with covariance matrix } \Sigma^{(n)} := \Var(X^{(n)}), \label{eq4}
\end{equation}
the work of Chang \textit{et al.} \cite{chang2021central} gives bounds and rates of convergence for 
\[
    r_n(\A^{\mathrm{re}}) := \sup_{A \in \A^{\mathrm{re}}} |\PP(X^{(n)} \in A) - \PP(\mathcal{N}_n \in A)|, 
\]
where $\A^{\mathrm{re}} := \Bigl\{\{\textbf{w} \in \R^\mathfrak{p} \negmedspace: \textbf{a} \leq \textbf{w} \leq \textbf{b}\} \mid \textbf{a}, \textbf{b} \in [-\infty, \infty]^\mathfrak{p}\Bigr\}$ is the system of all hyperrectangles, and $\mathcal{N}_n \sim \NN(0, \Sigma^{(n)})$ is a normal distribution with the same covariance structure as $X^{(n)}$. Bounds for $\A^{\mathrm{re}}$ (or for other systems of convex sets) in both constant and high dimensions have been strongly investigated for \textit{independent} variables. In recent years, there have been great efforts to improve the error bounds and the growth of dimension within the independent framework, see \cite{chernozhukov2013gaussian, chernozhukov2017central, chernozhukov2023nearly, das2021central, fang2021high, koike2021notes}. 

An interesting feature of \cite{chang2021central} is that the $X_t^{(n)}$ are allowed to be dependent which offers a wide range of applications beyond the independent framework, including the $1$-dependent decomposition of $(\hat{X}_\inv, X_\des)^{\top}$. The following two conditions need to be imposed on the $X_{t}^{(n)}= (X_{t,1}^{(n)}, \ldots,X_{t,\mathfrak{p}}^{(n)})^{\top}$. \par 
\bigskip
\noindent \textbf{Condition 1:} There exists a sequence of constants $B_n \geq 1$ and a universal constant $\gamma_1 \geq 1$ such that 
\[
    \max_{j=1,\ldots, \mathfrak{p}}\E\left(\exp\left(\left|\sqrt{n} X_{t,j}^{(n)}\right|^{\gamma_1} B_n^{-\gamma_1}\right)\right) \leq 2\,, \qquad t=1,\ldots,n.
\]
\textbf{Condition 2:} There exists a constant $K > 0$ such that for all $n \in \N \negmedspace$
\[
    \min_{j=1,\ldots, \mathfrak{p}} \Var\left(\sum_{t=1}^n X_{t,j}^{(n)}\right) \geq K.
\]
\begin{remark}
Condition 1 means sub-Gaussianity, i.e., by Markov's inequality, 
\[
    \forall u > 0 \negmedspace: \quad \PP(|\sqrt{n}X_{t,j}^{(n)}| > u) \leq 2\exp(-u^{\gamma_1}B_n^{-\gamma_1}).
\]
For sub-Gaussian variables, particularly for the bounded variables $X_\des, X_\inv$ and the H\'{a}jek projection $\hat{X}_\inv$, we can choose $\gamma_1 = 2$ and $B_n = O(1)$. Condition 2 implies non-degeneracy, which obviously holds true in our setting. 
\end{remark}

\begin{proposition}[see \cite{chang2021central}, Corollary 1] \label{thm3.1}
Let $(X_t^{(n)})_{t=1,\ldots,n}$ be a triangular array of mean zero random vectors in high dimensions, i.e., $X_1^{(n)}, \ldots, X_n^{(n)} \in \R^\mathfrak{p}$ with $\mathfrak{p} = \mathfrak{p}(n) \gg n^\kappa$ for a constant $\kappa > 0$. Assume that each row $X_1^{(n)}, \ldots, X_n^{(n)}$ is $m$-dependent with a global constant $m \in \N$. Under Conditions 1 and 2, it holds that
\[
    r_n(\A^{\mathrm{re}}) = O\left(\frac{B_nm^{2/3}\log(\mathfrak{p})^{7/6}}{n^{1/6}}\right)\,, \qquad n\to \infty.
\]
\end{proposition}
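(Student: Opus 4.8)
Proposition~\ref{thm3.1} is the $m$-dependent case of the general high-dimensional Gaussian approximation of Chang \textit{et al.}, and I only indicate the strategy by which it is obtained in \cite{chang2021central}. First, following \cite{chernozhukov2013gaussian,chernozhukov2017central}, one reduces the supremum over $\A^{\mathrm{re}}$ to a comparison of expectations of smooth test functions: the indicator of a hyperrectangle is approximated, at a smoothing scale $\beta>0$, by a smoothed-maximum function whose partial derivatives are bounded by fixed powers of $\beta^{-1}$ and of $\log\mathfrak p$, the cost of this approximation being a Gaussian anti-concentration term of order $\beta\sqrt{\log\mathfrak p}$, by Nazarov's anti-concentration inequality; this is where Condition~2 enters, keeping the marginal variances of $\mathcal N_n$ bounded below by $K>0$.

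Next, one controls the difference of the smoothed functionals of $X^{(n)}$ and of $\mathcal N_n$ by a Lindeberg-type telescoping argument carried out at the level of \emph{blocks} rather than single summands. Because each row is $m$-dependent, I would group $X_1^{(n)},\dots,X_n^{(n)}$ into $\Theta(n/m)$ consecutive blocks of length of order $m$ and swap them one at a time for Gaussian blocks with matching conditional covariances; this respects the $m$-dependence and leaves the covariance of the assembled Gaussian equal to $\Sigma^{(n)}$ up to cross-covariances localized at the block boundaries, which are removed by a final Slepian-type Gaussian-to-Gaussian comparison (again via Nazarov's inequality). Each swap contributes a third-order Taylor remainder, bounded using the derivative bounds on the test function together with third absolute moments of the size-$O(m)$ blocks.

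Here Condition~1 is used: sub-Gaussianity with exponent $\gamma_1$ gives $\E|X_{t,j}^{(n)}|^3 = O(B_n^3 n^{-3/2})$, so a block-sum has variance $O(m^2 B_n^2/n)$ and third absolute moment $O((m^2 B_n^2/n)^{3/2})$, the extra powers of $m$ being precisely the price of within-block dependence; summing over the $\Theta(n/m)$ blocks produces an effective Lindeberg quantity of order $m^2 B_n^3 n^{-1/2}$ (up to the constant $K$). Optimizing over the smoothing scale $\beta$ and over the block length — the optimal block length being of order $m$ — and tracking the powers of $\log\mathfrak p$ through the refined smoothing estimate yields
\[
    r_n(\A^{\mathrm{re}}) = O\!\left(\frac{B_n\, m^{2/3}\,(\log\mathfrak p)^{7/6}}{n^{1/6}}\right),
\]
the cube root of the Lindeberg quantity accounting for the exponents $2/3$ on $m$ and $1/6$ on $n$. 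The hypothesis $\mathfrak p=\mathfrak p(n)\gg n^\kappa$ plays no essential role beyond making $\log\mathfrak p$ the governing complexity, and can be arranged by padding the vectors with repeated components.

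The step I expect to be the main obstacle is this last one: the sharp exponents $7/6$ on $\log\mathfrak p$ and $1/6$ on $n$ are \emph{not} delivered by a single-pass Lindeberg estimate with the crude derivative bound $\beta^{-3}$ — that would give a strictly worse power of $n$ — and obtaining them requires the iterative, dimension-inductive smoothing and anti-concentration analysis of \cite{chernozhukov2017central} and its successors, additionally propagated through the block decomposition so that the dependence on $m$ remains polynomial. Since \cite{chang2021central} carries out exactly this program under exactly Conditions~1 and~2, we invoke their Corollary~1.
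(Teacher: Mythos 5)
The paper offers no proof of this proposition at all: it is imported verbatim as Corollary~1 of \cite{chang2021central}, and your argument likewise ends by invoking exactly that citation, so you take the same route as the paper. Your accompanying sketch of the smoothing/block-Lindeberg/anti-concentration machinery behind the quoted rate is a reasonable and internally consistent account of how the exponents $m^{2/3}$, $n^{-1/6}$ and $(\log\mathfrak p)^{7/6}$ arise, but it is supplementary to, not a substitute for, the external result being cited.
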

We note that if $\mathfrak{p}$ remains constant, we can artificially repeat the vector components (say $n$ times) and therefore, the requirement $\mathfrak{p} \gg n^\kappa$ can be removed. We obtain the following corollary. 

\begin{corollary} \label{cor3.1}
Let $(X_t^{(n)})_{t=1,\ldots,n}$ be a triangular array of mean zero random vectors in fixed dimension $\mathfrak{p}$ and suppose that each row $X_1^{(n)}, \ldots, X_n^{(n)}$ is $m$-dependent with a global constant $m \in \N$. Under Conditions 1 and 2, it holds that 
\[
    r_n(\A^{\mathrm{re}}) = O\left(\frac{B_nm^{2/3}\log(n)^{7/6}}{n^{1/6}}\right)\,, \qquad n\to \infty\,.
\]
\end{corollary}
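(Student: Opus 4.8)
The plan is to deduce the fixed-dimensional bound from the high-dimensional Proposition~\ref{thm3.1} by padding the dimension, exactly as anticipated in the remark preceding the statement. Fix $n$, set $\tilde{\mathfrak p} := n\mathfrak p$, and form the inflated array $\tilde X_1^{(n)}, \ldots, \tilde X_n^{(n)} \in \R^{\tilde{\mathfrak p}}$ by stacking $n$ identical copies of each original vector, i.e.\ $\tilde X_t^{(n)} := (X_t^{(n)}, \ldots, X_t^{(n)})$. Writing $\tilde X^{(n)} := \sum_{t=1}^n \tilde X_t^{(n)}$, $\tilde\Sigma^{(n)} := \Var(\tilde X^{(n)})$, and $\tilde{\mathcal N}_n \sim \NN(0,\tilde\Sigma^{(n)})$, I would first check that $(\tilde X_t^{(n)})$ meets every hypothesis of Proposition~\ref{thm3.1}: each $\tilde X_t^{(n)}$ is a deterministic function of $X_t^{(n)}$, so the rows remain mean zero and $m$-dependent (the generated $\sigma$-algebras are unchanged), and Conditions~1 and~2 hold verbatim with the \emph{same} constants $B_n,\gamma_1,K$, since the coordinatewise maximum and minimum there merely range over duplicated values. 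The growth requirement is met because $\tilde{\mathfrak p} = n\mathfrak p \gg n^{1/2}$, so one takes $\kappa = 1/2$.

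Applying Proposition~\ref{thm3.1} to the inflated array then gives
\[
  \sup_{A \in \A^{\mathrm{re}}} \bigl| \PP(\tilde X^{(n)} \in A) - \PP(\tilde{\mathcal N}_n \in A) \bigr| = O\!\left( \frac{B_n m^{2/3} \log(\tilde{\mathfrak p})^{7/6}}{n^{1/6}} \right),
\]
where $\A^{\mathrm{re}}$ now refers to the hyperrectangles in $\R^{\tilde{\mathfrak p}}$. Since $\mathfrak p$ is fixed, $\log(\tilde{\mathfrak p}) = \log n + \log\mathfrak p = O(\log n)$, so the right-hand side is $O\!\bigl(B_n m^{2/3}\log(n)^{7/6} n^{-1/6}\bigr)$, already the target order. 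It then remains to transfer this estimate back to dimension $\mathfrak p$. Given a hyperrectangle $A = \{\mathbf w \in \R^{\mathfrak p} : \mathbf a \le \mathbf w \le \mathbf b\}$ with $\mathbf a,\mathbf b \in [-\infty,\infty]^{\mathfrak p}$, let $\tilde A$ be the hyperrectangle in $\R^{\tilde{\mathfrak p}}$ obtained by repeating the bounds $n$ times; then $\tilde A \in \A^{\mathrm{re}}$ and $\{\tilde X^{(n)} \in \tilde A\} = \{X^{(n)} \in A\}$, since the extra coordinates of $\tilde X^{(n)}$ only reimpose the same constraints. On the Gaussian side, coordinates $j$ and $j+\mathfrak p$ (and all further copies) of $\tilde{\mathcal N}_n$ share a singular $2\times 2$ covariance block with all entries equal to $\Var(\tilde{\mathcal N}_{n,j})$, hence coincide almost surely; consequently the first $\mathfrak p$ coordinates of $\tilde{\mathcal N}_n$ are distributed as $\mathcal N_n \sim \NN(0,\Sigma^{(n)})$ and $\PP(\tilde{\mathcal N}_n \in \tilde A) = \PP(\mathcal N_n \in A)$. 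Taking the supremum over all such $A$, which is a subfamily of $\A^{\mathrm{re}}$ in dimension $\tilde{\mathfrak p}$, yields $r_n(\A^{\mathrm{re}}) \le \sup_{A\in\A^{\mathrm{re}}}\bigl|\PP(\tilde X^{(n)}\in A) - \PP(\tilde{\mathcal N}_n\in A)\bigr|$, and the claimed bound follows.

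The argument is essentially bookkeeping, and I expect the only point requiring genuine care to be this last step: one must verify both that restricting to the "diagonal" hyperrectangles of the padded space loses nothing, and that the degenerate Gaussian $\tilde{\mathcal N}_n$ assigns $\tilde A$ precisely the mass that $\mathcal N_n$ assigns to $A$ — the latter being exactly the almost-sure equality of the duplicated Gaussian coordinates forced by the singular covariance blocks. No other obstacle is anticipated.
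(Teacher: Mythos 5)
Your proposal is correct and follows exactly the route the paper takes: the corollary is justified there by the one-line remark preceding it, namely padding the dimension by repeating each vector's components $n$ times so that Proposition~\ref{thm3.1} applies with $\log(n\mathfrak{p})=O(\log n)$. You have simply spelled out the bookkeeping (preservation of Conditions 1 and 2, the diagonal hyperrectangles, and the almost-sure equality of the duplicated Gaussian coordinates) that the paper leaves implicit, and all of it checks out.
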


In what follows, we use the Gaussian approximation of Proposition~\ref{thm3.1} to prove that the vector of componentwise maxima of i.i.d.\ copies of $(X_\inv, X_\des)^\top$ is attracted to the bivariate Gumbel distribution with independent margins. Let $N = (N^{(1)}, N^{(2)})$ be a bivariate normal distribution with $N^{(1)}, N^{(2)} \sim \NN(0,1)$ and $\rho(N^{(1)}, N^{(2)}) < 1$, and let $(\kn)_{n \in \N}$ be a divergent sequence of positive integers. It is known by \cite[Theorem 3]{sibuya1960bivariate} that the maximum of $k_n$ i.i.d.\ samples $(N_i)_{i=1,\ldots,k_n}$ is attracted toward the bivariate standard Gumbel distribution with independent margins by virtue of  
\begin{equation}
  \alpha_n \big(\max\{N_1, \ldots, N_{\kn}\} - \alpha_n\big) \overset{\D}{\longrightarrow}
  \Lambda_2\,, \qquad \nto\,, \label{eqlambda}
\end{equation}
where $\Lambda_2(x,y) = \exp(-\exp(-x-y))$ and
\begin{align*}
     \alpha_n := \sqrt{2\log \kn} - \frac{\log \log \kn + \log(4\pi)}{2 \sqrt{2\log \kn}}\,,
\end{align*}
and \eqref{eqlambda} is read component-wise. This can be used together with Proposition~\ref{thm3.1}, but in our application, we first need to replace $X_\inv$ with its \Hajek\ projection $\hat{X}_\inv$, and then we need to control the error caused by this replacement.
	
Let $(X_\inv^{(j)}, X_\des^{(j)})^\top, j=1,\ldots, k_n$ be independent copies of $(X_\inv, X_\des)^\top$ on $S_n$. We are interested in the asymptotic joint distribution of the component-wise maxima of $(X_\inv, X_\des)^\top$. Equivalently, we investigate the standardized maxima
\begin{align}
    M_{n,\inv} &:= \frac{\max_{j=1,\ldots,\kn} X_\inv^{(j)} - \E(X_\inv)}{\sigma(X_\inv)} \quad \text{ and } \quad
    M_{n,\des} := \frac{\max_{j=1,\ldots,\kn} X_\des^{(j)} - \E(X_\des)}{\sigma(X_\des)}\,. \label{eq:minv}
\end{align}
We now postulate the main result of this paper. If the number of samples $k_n$ is not too large, then the distribution of $(X_\inv, X_\des)^\top$ is in the maximum domain of attraction of a bivariate Gumbel distribution with independent margins. 

\begin{theorem} \label{thm4.3}
Consider the setting from above and assume $(\kn \log \kn)/n\to 0$ as $\nto$. Then, it holds that
\begin{align}\label{eq:mainevlt}
\lim_{\nto}\P\big(\alpha_n (M_{n,\inv} -\alpha_n) \le x, \alpha_n (M_{n,\des} -\alpha_n) \le y  \big) = \Lambda(x) \Lambda(y)\,, \qquad x,y\in \R\,.
\end{align}
In particular, the maxima of inversions and descents on $S_n$ are asymptotically independent.
\end{theorem}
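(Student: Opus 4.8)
The plan is to reduce the bivariate extreme value statement \eqref{eq:mainevlt} to a single one‑dimensional tail estimate. Put $u_n:=\alpha_n+x/\alpha_n$ and $v_n:=\alpha_n+y/\alpha_n$; since the $\kn$ samples are independent,
\[
\PP\big(\alpha_n(M_{n,\inv}-\alpha_n)\le x,\ \alpha_n(M_{n,\des}-\alpha_n)\le y\big)=\big[\PP(Y_\inv\le u_n,\,Y_\des\le v_n)\big]^{\kn}.
\]
The univariate extreme value theorems of \cite{dorr2022extreme} apply here (the exponential growth condition on $\kn$ used there is implied by $\kn\log\kn/n\to 0$) and give $\kn\,\PP(Y_\inv>u_n)\to e^{-x}$ and $\kn\,\PP(Y_\des>v_n)\to e^{-y}$. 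Writing $\PP(Y_\inv\le u_n,Y_\des\le v_n)=1-\PP(Y_\inv>u_n)-\PP(Y_\des>v_n)+\PP(Y_\inv>u_n,Y_\des>v_n)$ and expanding $[\,\cdot\,]^{\kn}$, one sees that \eqref{eq:mainevlt} holds as soon as one has the \emph{corner estimate}
\[
\kn\,\PP\big(Y_\inv>u_n,\ Y_\des>v_n\big)\ \longrightarrow\ 0\,,\qquad \nto.
\]

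First I would replace $Y_\inv$ by its standardized \Hajek\ projection $\hat Y_\inv$. Let $\hat M_{n,\inv}$ denote the maximum of $\kn$ i.i.d.\ copies of $\hat Y_\inv$. From $|M_{n,\inv}-\hat M_{n,\inv}|\le\max_{1\le j\le\kn}|Y_\inv^{(j)}-\hat Y_\inv^{(j)}|$, a union bound and Chebyshev's inequality,
\[
\PP\big(\alpha_n|M_{n,\inv}-\hat M_{n,\inv}|>\vep\big)\ \le\ \frac{\kn\,\alpha_n^2}{\vep^2}\,\E\big[(Y_\inv-\hat Y_\inv)^2\big].
\]
Because $\hat X_\inv$ is the orthogonal projection of $X_\inv$ onto the space of sums of functions of the individual $Z_k$, one has $\Cov(X_\inv,\hat X_\inv)=\Var(\hat X_\inv)$, hence $\E[(Y_\inv-\hat Y_\inv)^2]=2\big(1-\sigma(\hat X_\inv)/\sigma(X_\inv)\big)$, which equals $O(1/n)$ by the explicit variances in Lemma~\ref{lemma2.4} and \cite{kahle2020counting}. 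Since $\alpha_n^2=\Theta(\log\kn)$, the right‑hand side is $O(\kn\log\kn/n)\to 0$, so $\alpha_n(M_{n,\inv}-\hat M_{n,\inv})\overset{\PP}{\longrightarrow} 0$. Combined with the univariate limit $\alpha_n(M_{n,\inv}-\alpha_n)\overset{\D}{\longrightarrow}\Lambda$ of \cite{dorr2022extreme}, Slutsky's theorem gives $\alpha_n(\hat M_{n,\inv}-\alpha_n)\overset{\D}{\longrightarrow}\Lambda$ and reduces \eqref{eq:mainevlt} to the same statement with $M_{n,\inv}$ replaced by $\hat M_{n,\inv}$; by the first paragraph, applied now to $(\hat X_\inv,X_\des)$, it therefore suffices to prove $\kn\,\PP(\hat Y_\inv>u_n,\,Y_\des>v_n)\to 0$. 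This is precisely the step that forces the assumption $\kn\log\kn/n\to 0$, as opposed to the milder exponential bound sufficient in the univariate case.

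The gain from the \Hajek\ replacement is that, by Corollary~\ref{cor2.5} and the decomposition \eqref{eq:1}, $\hat X_\inv+X_\des$ is a sum of $1$-dependent random variables, whereas $X_\inv+X_\des$ is not. I would therefore dominate the corner probability by a single tail,
\[
\big\{\hat Y_\inv>u_n,\ Y_\des>v_n\big\}\ \subseteq\ \Big\{\tfrac{\hat Y_\inv+Y_\des}{\sigma_n}>\tfrac{u_n+v_n}{\sigma_n}\Big\}\,,\qquad \sigma_n:=\sqrt{\Var(\hat Y_\inv+Y_\des)}=\sqrt{2+2\rho_n}\,,
\]
with $\rho_n:=\corr(\hat X_\inv,X_\des)=\Theta(1/n)$ by Corollary~\ref{cor2.7}, so $\sigma_n\to\sqrt2$. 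Now $(\hat Y_\inv+Y_\des)/\sigma_n$ is a standardized sum of $1$-dependent summands each bounded by $O(n^{-1/2})$, and a Cramér‑type moderate deviation bound for such a sum — in the same spirit as the univariate tail equivalences of \cite{dorr2022extreme}, and applicable here exactly because of the $1$-dependent structure — yields $\PP\big((\hat Y_\inv+Y_\des)/\sigma_n>t_n\big)\sim 1-\Phi(t_n)$ uniformly for $t_n=o(n^{1/6})$. The relevant threshold $t_n=(u_n+v_n)/\sigma_n$ equals $\sqrt2\,\alpha_n$ up to lower‑order terms and lies well inside this range, since $\kn\log\kn/n\to 0$ forces $\log\kn=o(n^{\vep})$ for every $\vep>0$. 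Finally, the defining property $1-\Phi(\alpha_n)\sim 1/\kn$ gives $1-\Phi(\sqrt2\,\alpha_n)=\Theta(\alpha_n/\kn^{2})$, so
\[
\kn\,\PP\big(\hat Y_\inv>u_n,\ Y_\des>v_n\big)\ =\ O\!\Big(\tfrac{\sqrt{\log\kn}}{\kn}\Big)\ \longrightarrow\ 0\,,
\]
which is the required corner estimate; the factorization $\Lambda(x)\Lambda(y)$ in \eqref{eq:mainevlt} is exactly the asymptotic independence of the two maxima.

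The main obstacle is this last corner estimate. A merely sub‑Gaussian bound on $\hat Y_\inv+Y_\des$ would lose a constant factor in the exponent and give only $\kn\,\PP(\cdot)=O(1)$; one genuinely needs \emph{multiplicative} (Cramér‑type) control of the tail up to deviations of order $\sqrt{\log\kn}$. This can be obtained from the cumulant method for weakly dependent sums — the summands being of size $O(n^{-1/2})$ makes all cumulants of order at least three of the standardized sum negligible — or cited from the moderate‑deviation literature for $m$-dependent sequences. The remaining ingredients (the precise asymptotics of $\alpha_n$ and of $1-\Phi$, and the bookkeeping of the $\rho_n=O(1/n)$ corrections) are routine.
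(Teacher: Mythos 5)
Your proposal is correct, and while the \Hajek-projection step is essentially the paper's own (the paper likewise controls $\alpha_n|M_{n,\inv}-\hat M_{n}|$ via a union bound, Chebyshev's inequality and the identity $\Cov(X_\inv,\hat X_\inv)=\Var(\hat X_\inv)$, arriving at the same $O(\kn\log\kn/n)$ bound that dictates the hypothesis), your treatment of the main step is genuinely different. The paper stacks the $\kn$ independent copies of the $1$-dependent decomposition into a single $2\kn$-dimensional vector, applies the Chang--Chen--Wu Gaussian approximation over hyperrectangles (Proposition~\ref{thm3.1}) to compare $(\hat M_{n},M_{n,\des})$ with the maxima of a Gaussian vector with block-diagonal covariance, and then invokes the classical asymptotic independence of bivariate Gaussian maxima with correlation $\rho_n\to 0$. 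You instead use the classical inclusion--exclusion reduction for componentwise maxima, take the marginal tails from the univariate EVLTs of \cite{dorr2022extreme}, and dispose of the corner term by dominating $\PP(\hat Y_\inv>u_n,\,Y_\des>v_n)$ with the tail of the $1$-dependent sum $\hat Y_\inv+Y_\des$ at level $\approx\sqrt 2\,\alpha_n$; your bookkeeping with $1-\Phi(\sqrt2\,\alpha_n)=\Theta(\alpha_n/\kn^2)$ is right and does make the source of the asymptotic independence transparent (the joint exceedance probability is of smaller order than $1/\kn$). What each route buys: the paper's is uniform and quantitative and extends verbatim to any fixed number of $m$-dependent components, at the price of a heavy external approximation theorem; yours is more elementary and closer in spirit to \cite{dorr2022extreme}, at the price of importing a Cram\'er-type moderate deviation (or at least a Bernstein-type bound with the exact variance proxy $2+2\rho_n$, which would already suffice) for $1$-dependent sums. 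That ingredient does exist in the literature and your range $t_n=o(n^{1/6})$ is the correct Cram\'er zone, but you should verify that the result you cite covers triangular arrays with the non-identically distributed, unequally weighted summands $(n-2k+1)(Z_k-1/2)$ rather than only stationary $m$-dependent sequences; this is the one point where your argument is a citation rather than a proof.
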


\begin{proof}
Let $(Z_i^{j})_{i,j\ge 1}$ be a collection of independent $U(0,1)$ distributed random variables and recall that $\alpha_n\sim \sqrt{2 \log \kn}$. Then we have the representation
\[
    \begin{pmatrix} X_\inv^{(j)} \\ X_\des^{(j)} \end{pmatrix}_{j=1,\ldots,\kn} \eid \begin{pmatrix}
    \sum_{1 \leq i<k \leq n} \textbf{1}\{Z_i^{(j)} > Z_k^{(j)}\} \\ \sum_{i=1}^{n-1} \textbf{1}\{Z_i^{(j)} > Z_{i+1}^{(j)}\} \end{pmatrix}_{j=1,\ldots,\kn}. 
\] 
Therefore, by Slutsky's theorem, \eqref{eq:mainevlt} is an immediate consequence of 
\begin{align}\label{eq:mainevlt2}
    \lim_{\nto}\P\big(\alpha_n (\hat{M}_{n} -\alpha_n) \le x, \alpha_n (M_{n,\des} -\alpha_n) \le y  \big) = \Lambda(x) \Lambda(y)\,, \qquad x,y\in \R\,,
\end{align}
and 
\begin{equation}\label{eq:dfdgs}
\sqrt{\log \kn} \,|M_{n,\inv}-\hat{M}_{n}| \overset{\mathbb{P}}{\longrightarrow} 0\,, \qquad \nto\,,
\end{equation}
where $\hat{M}_{n} := \sigma(\hat{X}_\inv)^{-1}\left(\max_{j=1,\ldots,\kn} \hat{X}_\inv^{(j)} - \E(X_\inv)\right)$.
It remains to show \eqref{eq:mainevlt2} and \eqref{eq:dfdgs}. We begin with the proof of \eqref{eq:dfdgs} and get
\begin{align*}
    |M_{n,\inv}-\hat{M}_{n}| &\le \max_{j=1,\ldots,\kn} \bigg|   \frac{X_\inv^{(j)} - \E(X_\inv)}{\sigma(X_\inv)}- \frac{\hat{X}_\inv^{(j)} - \E(X_\inv)}{\sigma(\hat{X}_\inv)}\bigg|\\
    &= \max_{j=1,\ldots,\kn} \bigg|   \frac{X_\inv^{(j)}-\hat{X}_\inv^{(j)}}{\sigma(X_\inv)} +(\hat{X}_\inv^{(j)} - \E(X_\inv)) \frac{\sigma(\hat{X}_\inv)-\sigma(X_\inv)}{\sigma(X_\inv)\sigma(\hat{X}_\inv)} \bigg|\,.
\end{align*}
Thus, for any $\vep>0$, we obtain
\begin{align*} 
    &\P\Big( \sqrt{\log \kn} \,|M_{n,\inv}-\hat{M}_{n}|>2 \vep\Big)\le \P\Big( \sqrt{\log \kn} \max_{j=1,\ldots,\kn}\bigg|   \frac{X_\inv^{(j)}-\hat{X}_\inv^{(j)}}{\sigma(X_\inv)} \bigg|>\vep \Big)\\
    &+ \P\Big( \sqrt{\log \kn} \max_{j=1,\ldots,\kn}\bigg|   (\hat{X}_\inv^{(j)} - \E(X_\inv)) \frac{\sigma(\hat{X}_\inv)-\sigma(X_\inv)}{\sigma(X_\inv)\sigma(\hat{X}_\inv)} \bigg|>\vep \Big)=:P_1+P_2\,.
\end{align*}
Using the union bound and Markov's inequality, we have
\begin{align}
    P_1&\le \kn \,\P \Big(  |X_\inv-\hat{X}_\inv |> \frac{\sigma(X_\inv) \vep}{\sqrt{\log \kn}} \Big)
    \le \kn\, \frac{\log \kn}{\Var(X_\inv) \vep^2} \E |X_\inv-\hat{X}_\inv |^2 \notag\\
    &= \frac{\kn \log \kn}{\Var(X_\inv)\vep^2} \Big( \Var(X_\inv)+\Var(\hat{X}_\inv)-2 \Cov(X_\inv,\hat{X}_\inv)  \Big) \notag\\
    &= \frac{\kn \log \kn}{\vep^2} \Big( 1-\frac{\Var(\hat{X}_\inv)}{\Var(X_\inv)}  \Big)\,, \label{p1}
\end{align}
where the last equality follows from the fact that $\Cov(X_\inv,\hat{X}_\inv)=\Var(\hat{X}_\inv)$, see, e.g., \cite[Theorem~11.1]{van2000asymptotic}. Plugging in the formulas for $\Var(X_\inv)$ and $\Var(\hat{X}_\inv)$ from the end of the proof of Lemma~\ref{lemma2.4}, we get $\Var(\hat{X}_\inv)/\Var(X_\inv)=1+O(1/n)$ from which we conclude that 
$$P_1=\kn \log \kn \, O(1/n)\,,\qquad \nto\,,$$
which tends to zero by the assumption on $\kn$. Repeating the above considerations for $P_2$ yields
\begin{align}
P_2&\le \frac{\kn  \log \kn}{\vep^2} \Big(\frac{\sigma(\hat{X}_\inv)-\sigma(X_\inv)}{\sigma(X_\inv)}\Big)^2 \E\Big(\frac{\hat{X}_\inv^{(j)} - \E(X_\inv)}{\sigma(\hat{X}_\inv)}\Big)^2 \notag\\
&= \kn \log \kn \, O(1/n)=o(1)\,,\qquad \nto\,, \label{p2}
\end{align}
which completes the proof of \eqref{eq:dfdgs}. Regarding \eqref{eq:mainevlt2}, we recall that by Corollary~\ref{cor2.5}, 
\begin{align*}
\begin{pmatrix} \hat{X}_\inv^{(j)} -\E(\hat{X}_\inv^{(j)})  \\ X_\des^{(j)}-\E(X_\des^{(j)}) \end{pmatrix} &= 
\sum_{k=1}^{n-1} \begin{pmatrix} (n-2k+1)(Z_k^{(j)}-1/2) \\ \mathbf{1}\{Z_k^{(j)} > Z_{k+1}^{(j)}\} -1/2 \end{pmatrix} +  \begin{pmatrix} -(n-1)(Z_n^{(j)}-1/2) \\ 0 \end{pmatrix}.
\end{align*}
This is a sum of 1-dependent centered random vectors. Setting
\begin{align*} 
Y_k^{(n,j)}&:= \begin{pmatrix} (n-2k+1)(Z_k^{(j)}-1/2)/\sigma(\hat{X}_\inv) \\ (\mathbf{1}\{Z_k^{(j)} > Z_{k+1}^{(j)}\} -1/2)/\sigma(X_\des) \end{pmatrix}, \qquad k=1,\ldots,n-1,\\
Y_n^{(n,j)}&:= \begin{pmatrix} -(n-1)(Z_n^{(j)}-1/2)/\sigma(\hat{X}_\inv) \\ 0 \end{pmatrix},
\end{align*}
we obtain the representation $(\hat{Y}_\inv^{(j)}, Y_\des^{(j)})^\top = \sum_{k=1}^n Y_k^{(n,j)}$. 
The covariance matrix of $(\hat{Y}_\inv^{(j)}, Y_\des^{(j)})^\top$  is given by $\Sigma^{(n)}=\begin{pmatrix} 1 & \rho_n \\ \rho_n & 1 \end{pmatrix}$, where $\rho_n:= \rho(\hat{X}_\inv, X_\des)$. 
For a centered normal random vector $\mathcal{N}_{n}=(N_1, \ldots,N_{2\kn})^\top$ whose covariance matrix is block-diagonal with all $k_n$ diagonal blocks equal to $\Sigma^{(n)}$, 
we write
$$P_n(x,y):=\P\left(\alpha_n \Big(\max_{j=1,\ldots, \kn} N_{2j-1} -\alpha_n\Big) \le x, \alpha_n \Big(\max_{j=1,\ldots, \kn} N_{2j} -\alpha_n\Big) \le y  \right)\,, \qquad x,y\in \R\,.$$
We can also write
\begin{align*}
    &\P\big(\alpha_n (\hat{M}_{n} -\alpha_n) \le x, \alpha_n (M_{n,\des} -\alpha_n) \le y\big) \\
    =~& \P\left(\alpha_n(\hat{Y}_\inv^{(1)}, \ldots, \hat{Y}_\inv^{(k_n)})^\top - \boldsymbol{\upalpha}_n \leq \textbf{x},\, \alpha_n(Y_\des^{(1)}, \ldots, Y_\des^{(k_n)})^\top - \boldsymbol{\upalpha}_n \leq \textbf{y}\right),
\end{align*}
with $\textbf{x} = (x, \ldots, x)^\top, \textbf{y} = (y, \ldots, y)^\top, \boldsymbol{\upalpha}_n = (\alpha_n, \ldots, \alpha_n)^\top \in \R^{k_n}$. An application of Proposition~\ref{thm3.1} then yields, as $\nto$,
\begin{align*}
   &\big| \P\big(\alpha_n (\hat{M}_{n} -\alpha_n) \le x, \alpha_n (M_{n,\des} -\alpha_n) \le y  \big) -	P_n(x,y) \big| \\
    &= O\left(n^{-1/6}\log(k_n)^{7/6}\right) = o(1)\,.
\end{align*}
Finally, by the standard result \cite[Theorem 3]{sibuya1960bivariate} for maxima of bivariate Gaussian random vectors with correlation strictly less than $1$, we have that 
$$P_n(x,y) \overset{n\rightarrow\infty}{\longrightarrow} \Lambda(x) \Lambda(y),$$
since $\rho_n\to 0$ (see Corollary~\ref{cor2.7}), completing the proof of \eqref{eq:mainevlt2}.
\end{proof}

\begin{remark}
Due to the H\'{a}jek approximation error, the upper bound for the row-wise number of samples $k_n$ is stricter than that for the individual statistics $X_\inv$ and $X_\des$ given in \cite{dorr2022extreme}. In particular, this excludes the uniform triangular array ($k_n=n$). On the other hand, this new EVLT can be transferred to other individual and joint permutation statistics, and if H\'{a}jek's projection is not involved, we can achieve a subexponential bound for $k_n$ similarly as in \cite{dorr2022extreme}. Besides descents, some other examples of $m$-dependent permutation statistics on $S_n$ are \textit{peaks} (all indices $i$ with $\pi(i-1)<\pi(i)>\pi(i+1)$) and \textit{valleys} (all $i$ with $\pi(i-1) > \pi(i) < \pi(i+1)$). Since the proof of Theorem~\ref{thm4.3} does not rely on any special property of descents other than $m$-dependence, any other $m$-dependent permutation statistic could be combined with inversions. Furthermore, if we consider a permutation statistic consisting of one or two $m$-dependent components, then there is no need to use H\'{a}jek's projection and the corresponding part in the proof of Theorem~\ref{thm4.3} can be removed. 

To formalize these thoughts, let $\mathcal{W}$ be the system of classical Weyl groups or a subsystem (e.g., the family of symmetric groups) and let $(X_n)_{n \in \N}$ be a collection of random variables $X_n \negthickspace: W_n \rightarrow \N^d$ with $W_n \in \mathcal{W}$, $\rk(W) = n$, and with $d \in \{1,2\}$ fixed. Moreover, we assume that there is a representation 
\begin{equation}
    X_n = \sum_{i=1}^n f_i(Z_1, \ldots, Z_n) =: \sum_{i=1}^n X_n^{(i)} \label{4.12}
\end{equation}
for some independent sequence $Z_1, Z_2, \ldots$ of random variables and functions $f_n \negthickspace: \R^n \rightarrow \N^d,$ such that the following is satisfied:
\begin{itemize}
    \item If $d=1$, then we assume that $X_n$ is $m$-dependent (i.e., that all blocks $X_n^{(1)}, \ldots, X_n^{(n)}$ are $m$-dependent) for some $m \in \N$ chosen independently of $n$. Besides $X_\des$, an example of such a permutation statistic is the number of peaks or valleys.
    \item If $d=2$, then one component must be $m$-dependent. The other component must be $m$-dependent as well, or satisfy the condition of Theorem~\ref{thm2.2}, i.e., $\Var(X_n) \sim \Var(\hat{X}_n)$, where $\hat{X}_n$ is the H\'{a}jek projection of $X_n$ based on \eqref{4.12}. In the latter case, the connection to the H\'{a}jek projection needs to be established by ensuring proper bounds in \eqref{eq:mainevlt2} and \eqref{eq:dfdgs}. In light of \cite[Theorem 3]{sibuya1960bivariate}, it is required that the correlation between the two components is not $1$, but this is commonly trivial to verify. 
\end{itemize}
\end{remark}

\begin{theorem} \label{thm4.4.3}
Let $(W_n)_{n \in \N} \subseteq \mathcal{W}$ be a sequence of classical Weyl groups with $\rk(W_n) = n~\forall n \in \N$. Let $X_n$ be a permutation statistic as described above. Let $(X_{nj})_{j=1, \ldots, k_n}$ be a row-wise i.i.d.\ triangular array with $X_{n1} \overset{\D}{=} X_n,$ where:
\begin{enumerate}
    \item[\emph{(a)}] If $X_n$ is $m$-dependent, we assume $k_n = \exp\bigl(o(n^{1/7})\bigr).$
    \item[\emph{(b)}] If $X_n$ consists of two components, one of which is $m$-dependent and the other is not, but satisfies the condition of Theorem \emph{\ref{thm4.3},} then we assume $k_n\log(k_n) = o(n)$.
\end{enumerate}
Let $M_n := \max\{X_{n1}, \ldots, X_{nk_n}\}$ be the row-wise maximum. Let $\mu_n := \E(X_n),$ $s_n := \sigma(X_n),$ $a_n := s_n\alpha_{k_n},$ and $b_n := s_n\alpha_{k_n} + \mu_n$, which is understood component-wise in case of $d=2$. Then,
\[
    \forall \mathbf{x} \in \R^d \negmedspace: \PP(M_n \leq a_n \circ \mathbf{x} + b_n) \longrightarrow \Lambda_d(\mathbf{x})\,.
\]
\end{theorem}

\begin{proof}
In case of (b), the proof is identical to that of Theorem~\ref{thm4.3}. In case of (a), we only need to show \eqref{eq:mainevlt2}, while we replace $(\hat{M}_{n,\inv}, M_{n,\des})$ with the standardized maximum of $X_n$. We can apply Theorem~\ref{thm3.1} with $\mathfrak{p}(n) =k_n$ iterations of $X_n$. Therefore, we need to ensure that
\[
    n^{-1/6}\log(k_n)^{7/6} = o(1)\,,
\]
which exactly corresponds to the stated condition of $k_n = \exp\bigl(o(n^{1/7})\bigr)$. The claim follows the same way as in the proof of Theorem~\ref{thm4.3}.
\end{proof}


\noindent In conclusion, for independent and $m$-dependent permutation statistics, the high-dimensional Gaussian approximation allows to obtain a subexponential bound of $k_n,$ improving \cite[Theorem 5.1]{dorr2022extreme}. In particular, this applies to the following permutation statistics: 

\begin{corollary}
Let $Z_1, \ldots, Z_n \sim U(0,1)$ be i.i.d. The following three permutation statistics are in the max-domain of the Gumbel distribution, given a triangular array with row lengths satisfying $k_n = \exp\bigl(o(n^{1/7})\bigr) \negmedspace:$
\begin{itemize}
    \item the number of peaks $X_p := \displaystyle{\sum\nolimits_{i=2}^{n-1} \mathbf{1}\{Z_i > Z_{i-1}, Z_{i+1}\}}\hspace{0.3mm},$
    \item the number of valleys $X_v := \displaystyle{\sum\nolimits_{i=2}^{n-1} \mathbf{1}\{Z_{i-1}, Z_{i+1} > Z_i\}}\hspace{0.3mm},$
    \item the number of cycles $K_{0n},$ since according to \cite[Eq.\ 1.27]{arratia2003logarithmic}\emph{,} it has the independent sum decomposition $\displaystyle{K_{0n} = \sum\nolimits_{j=1}^n \Bin(1, j^{-1})}$.
\end{itemize}
\end{corollary}

\section{Signed and even-signed permutation groups} \label{section5}

The previous section established the EVLT for the joint distribution $(X_\inv, X_\des)^{\top}$~on the family of symmetric groups. We now work toward this result for $(X_\inv, X_\des)^{\top}$ on the \textit{signed and even-signed permutation groups} $B_n$ and $D_n$ that generalize the symmetric groups $S_n$. In this process, we also obtain the CLT for $(X_\inv, X_\des)^{\top}$ on these groups, which was already proven by \cite{fang2015rates} on symmetric groups.

The group $B_n$ of \textit{signed permutations} on $n$ letters arises from $S_n$ by assigning any combination of positive or negative signs to the entries of a permutation. The \textit{even-signed permutation group} $D_n$ is the subgroup of $B_n$ consisting of elements with an even number of negative signs. As in \cite[Section 2.1]{kahle2020counting}, we use the in-line notation 
\begin{align}
    B_n = \bigl\{\pi = (\pi(1), \ldots, \pi(n)) :~&\pi(i) \in \{\pm 1, \ldots, \pm n\}~\forall i, \nonumber \\
    & \{|\pi(1)|, \ldots, |\pi(n)|\} = \{1, \ldots, n\}\bigr\}, \label{eq5}
\end{align}
and we have $D_n = \{\pi \in B_n \negmedspace: \pi(1)  \pi(2)  \cdots  \pi(n) > 0\}$.

The combinatorial representation of inversions on $B_n$ and $D_n$ is given by
\begin{equation}
    X_\inv(\pi) = \begin{cases} |\Inv^+(\pi)| + |\Inv^-(\pi)| + |\Inv^\circ(\pi)|, & \text{ if } \pi \in B_n, \\ |\Inv^+(\pi)| + |\Inv^-(\pi)|\,, & \text{ if } \pi \in D_n\,, \end{cases} \label{5}
\end{equation}
for the disjoint sets
\begin{align*}
    \Inv^+(\pi) &= \{1 \leq i < j \leq n \mid \pi(i) > \pi(j)\}, \\
    \Inv^-(\pi) &= \{1 \leq i < j \leq n \mid -\pi(i) > \pi(j)\}, \\
    \Inv^\circ \hspace{0.85mm} (\pi) &= \{1 \leq i \leq n \mid \pi(i) < 0\}.
\end{align*}
When expanding the in-line notation in \eqref{eq5} by setting $\pi(0) := 0$, we can also represent $X_\des$ on $B_n$ and $D_n$ as
\begin{subequations}
\begin{align}
    X_\des^B(\pi) &= \sum_{i=0}^{n-1} \textbf{1}\{\pi(i) > \pi(i+1)\}, \label{14a} \\
    X_\des^D(\pi) &= \textbf{1}\{-\pi(2) > \pi(1)\} + \sum_{i=1}^{n-1} \textbf{1}\{\pi(i) > \pi(i+1)\}. \label{14b}
\end{align}
\end{subequations}
For more details and formal proofs, see \cite[Sections 8.1 and 8.2]{bjorner2006combinatorics}.

To draw elements uniformly from $B_n$, one can first draw some uniform $\pi \in S_n$ and then multiply each $\pi(i)$ with a Rademacher variable independent of everything else. Instead of the Rademacher variables, we propose a more general approach by drawing signs with a fixed $p$-bias, i.e., each sign is $-1$ with probability $p \in [0, 1]$ and $+1$ with probability $q := 1-p$. This yields a family of probability measures on $B_n$, where the case $p=1/2$ corresponds to the uniform distribution on $B_n$, while if $p=0$, all mass is on the symmetric group $S_n \subseteq B_n$. In this sense, we have a continuous transition from $S_n$ to the groups $B_n$ and $D_n$. 

A corresponding probability distribution on the even-signed permutation group $D_n$ is obtained by first choosing the unsigned permutation $|\pi| \in S_n$ uniformly and then assigning $n-1$ signs for the entries $\pi(1), \ldots, \pi(n-1)$ with $p$-bias, and finally specifying the sign of $\pi(n)$ such that $\pi(1) \cdots \pi(n-1)\pi(n)>0$.

\begin{definition}
Let $p \in [0, 1]$ and $q := 1-p$. Then, the $p$-\textit{biased signed permutations measure} on the group $B_n$ is the probability measure induced by the point masses
\[
    \PP(\{\pi\}) = \frac{1}{n!}p^{\text{neg}(\pi)}q^{n-\text{neg}(\pi)},\qquad \pi \in B_n\,,
\]
where $\text{neg}(\pi)$ denotes the number of negative signs in $\pi$ and we use the convention $0^0 := 1$. The $p$-biased signed permutations measure on $D_n$ is derived as described above.
\end{definition}

Therefore, the entries of $\pi$, with $\pi$ distributed according to the $p$-biased signed permutations measure, can be represented by i.i.d.\ random variables $Z_1, \ldots, Z_n$ with $\forall i = 1, \ldots, n \negmedspace: Z_i = U_iR_i,$ where $R_i$ is a $\pm 1$-valued random variable with
\[
    \PP(R_i = -1) = p \quad \text{ and } \quad \PP(R_i = 1) = q,
\]
and $U_i \sim U(0,1)$ is independent of $R_i$. The probability distribution function of $Z_1$ is
\[
    F_p(z) := \P(Z_1\le z) = \begin{cases} pz + p, & \quad z\in [-1,0], \\ qz + p, &  \quad z\in [0,1], \end{cases}
\]
and we simply write $Z_1 \sim \GR(p)$ (generalized Rademacher with parameter $p$). Note the special cases $\GR(0) = U(0,1),$ $\GR(1/2) = U(-1,1)$ and $\GR(1) = U(-1,0)$. Figure~\ref{fig4} illustrates $F_p$ in the cases of $p=0, p=1/4$ and $p=3/4$. Accordingly, the Lebesgue density of $\GR(p)$ is $f_p(z) = p\textbf{1}\{-1 < z < 0\} + q\textbf{1}\{0 < z < 1\}$.

\begin{figure}[ht]
    \centering
    \includegraphics[scale=1.15]{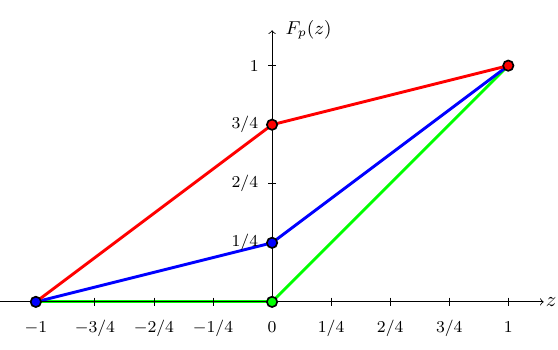}
    \caption{Probability distribution functions of $\GR(p)$ for $p=0$ (green), $p=1/4$ (blue) and $p=3/4$ (red).}
    \label{fig4}
\end{figure}

\begin{remark}
Let $X_\inv^B$ denote the random number of inversions on $B_n$ and let $X_\inv^D$ denote that on $D_n$. According to \eqref{5}, we can write
\begin{subequations}
\begin{align}
    X_\inv^B &= \sum_{i<j} \textbf{1}\{Z_i > Z_j\} + \sum_{i<j} \textbf{1}\{-Z_i > Z_j\} + \sum_{i=1}^n \textbf{1}\{Z_i < 0\}, \label{6} \\
    X_\inv^D &= \sum_{i<j} \textbf{1}\{Z_i > Z_j\} + \sum_{i<j} \textbf{1}\{-Z_i > Z_j\}. \label{7}
\end{align}
\end{subequations}
Furthermore, \eqref{14a} and \eqref{14b} translate to
\begin{subequations}
\begin{align}
    X_\des^B &= \sum_{i=1}^{n-1} \textbf{1}\{Z_i > Z_{i+1}\} + \textbf{1}\{Z_1 < 0\}, \label{13a} \\
    X_\des^D &= \sum_{i=1}^{n-1} \textbf{1}\{Z_i > Z_{i+1}\} + \textbf{1}\{-Z_2 > Z_1\}. \label{13b}
\end{align}
\end{subequations}
In what follows, we use $X_\inv$ and $X_\des$ as an umbrella notation for the numbers of inversions and descents on each of the groups $S_n, B_n$ and $D_n$. Again, both \eqref{13a} and \eqref{13b} give a $1$-dependent representation of $X_\des$, which yields a $1$-dependent representation of $(\hat{X}_\inv, X_\des)^\top$. 

In the uniform case ($p=1/2$), the means and variances are known by \cite[Corollaries 3.2 and 4.2]{kahle2020counting}. We now  calculate them within the general $p$-bias framework. To this end, we first observe that for any $i<j$,
\begin{align*}
\P(-Z_i > Z_j) &=\E ( \P(-Z_i > Z_j| Z_i)) = \E(F_p(-Z_i))= \E(\E(F_p(-Z_i)|U_i))\\
&= \E(p F_p(U_i)+q F_p(-U_i)) =\E(p(qU_i+p)+q(-pU_i+p)) = p\,.
\end{align*}
Then, it follows straightforwardly from \eqref{6} and \eqref{7} that 
\begin{align*}
    \E\left(X_\inv^B\right) &= \binom{n}{2}\left(p + \frac{1}{2}\right) + np, & \E\left(X_\inv^D\right) &= \binom{n}{2}\left(p + \frac{1}{2}\right).
\end{align*}
The formula for $\Var(X_\inv)$ is given in the next lemma.
\end{remark}

\begin{lemma}[see Subsection \ref{sec9.4} for the proof] \label{lemma5.4}
    On the $p$-biased (even-)signed permutation groups, we have
    \begin{align*}
        \Var\left(X_\inv^B\right) &= \left(-\frac{1}{3}p^2 + \frac{1}{3}p + \frac{1}{36}\right)n^3 - \left(3p^3 - 4p^2 + p - \frac{1}{24}\right)n^2 \\
        &\hspace{2mm} \qquad +~\left(3p^3 - \frac{14}{3}p^2 + \frac{5}{3}p - \frac{5}{72}\right)n, \\
        \Var\left(X_\inv^D\right) &= \left(-\frac{1}{3}p^2 + \frac{1}{3}p + \frac{1}{36}\right)n^3 - \left(p^3 - 2p^2 + p - \frac{1}{24}\right)n^2 \\
        &\hspace{2mm} \qquad +~\left(p^3 - \frac{5}{3}p^2 + \frac{2}{3}p - \frac{5}{72}\right)n.
    \end{align*}
    In particular, if $p=0$ or $p=1/2,$ we obtain the results in \emph{\cite[Corollaries 3.2 and 4.2]{kahle2020counting}}.
\end{lemma}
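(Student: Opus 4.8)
The plan is to compute both variances directly from the combinatorial decompositions \eqref{6} and \eqref{7}. Write $X_\inv^B = S_1+S_2+S_3$ with $S_1 = \sum_{i<j}\mathbf{1}\{Z_i>Z_j\}$, $S_2 = \sum_{i<j}\mathbf{1}\{-Z_i>Z_j\} = \sum_{i<j}\mathbf{1}\{Z_i+Z_j<0\}$, $S_3 = \sum_{i=1}^n\mathbf{1}\{Z_i<0\}$, and $X_\inv^D = S_1+S_2$, so that $\Var(X_\inv^B) = \sum_{k}\Var(S_k)+2\sum_{k<l}\Cov(S_k,S_l)$, and similarly for $D_n$. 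Since $\GR(p)$ has no atoms, the rank permutation of the i.i.d.\ variables $Z_1,\dots,Z_n$ is uniform on $S_n$, hence $S_1 \eid X_\inv$ on $S_n$ and $\Var(S_1)$ is the value already recorded above (equivalently, it is read off from the i.i.d.-sum representation \eqref{1.3}); moreover $S_3\sim\Bin(n,p)$, so $\Var(S_3) = npq$.

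The only genuinely new single-sum variance is $\Var(S_2)$, which is a non-degenerate $U$-statistic-type sum. Splitting $\Var(S_2)$ into the $\binom{n}{2}$ diagonal terms $\Var(\mathbf{1}\{Z_i+Z_j<0\}) = pq$ plus the pairwise covariances, the disjoint index pairs contribute nothing, while a pair sharing a single index contributes $\E\bigl(F_p(-Z_1)^2\bigr)-p^2$ after conditioning on the shared coordinate. The workhorse is thus $\E\bigl(F_p(-Z_1)^2\bigr)$, which one evaluates by integrating $F_p(-z)^2$ against the density $f_p$; the key point is that after subtracting $p^2$ the cubic-in-$p$ contributions cancel, leaving $\E\bigl(F_p(-Z_1)^2\bigr)-p^2 = pq/3$. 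Counting the $n(n-1)(n-2)$ ordered pairs of distinct index-pairs sharing one index then gives $\Var(S_2) = \binom{n}{2}pq + \tfrac{pq}{3}\,n(n-1)(n-2) = \tfrac{pq}{6}\,n(n-1)(2n-1)$.

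For the cross-covariances one expands each $\Cov(S_k,S_l)$ as a double sum; terms indexed by disjoint coordinate sets vanish by independence, so only the same-pair and one-shared-index terms survive, and these reduce to a few low-dimensional probabilities such as $\PP(Z_1>Z_2,\,Z_1+Z_2<0)$, $\PP(Z_1>Z_2,\,Z_1<0)$ and $\PP(Z_1+Z_2<0,\,Z_1<0)$. Using exchangeability and the identity $\PP(-Z_i>Z_j)=p$ from the text, most of these collapse: conditionally on the symmetric event $\{Z_1+Z_2<0\}$ one has $\PP(Z_1>Z_2)=\tfrac12$, so $\Cov(\mathbf{1}\{Z_1>Z_2\},\mathbf{1}\{Z_1+Z_2<0\})=0$, and a sign-flip cancellation among the shared-index terms then yields $\Cov(S_1,S_2)=0$ and $\Cov(S_1,S_3)=0$, whereas $\Cov(S_2,S_3) = \binom{n}{2}pq$. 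Plugging everything into $\Var(X_\inv^B) = \Var(S_1)+\Var(S_2)+\Var(S_3)+2\Cov(S_2,S_3)$, expanding in powers of $n$ and substituting $q=1-p$ produces the claimed polynomial; reading off the cases $p=0$ and $p=1/2$ against the known uniform-case formulas of Kahle \& Stump is a built-in check.

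The even-signed case needs more care. Under the $p$-biased measure on $D_n$ the signs $R_1,\dots,R_{n-1}$ are i.i.d.\ but $R_n = \prod_{i<n}R_i$ is forced, so $Z_n$ is not independent of $Z_1,\dots,Z_{n-1}$ and the independence used above fails for every term involving index $n$. I would condition on $(R_1,\dots,R_{n-1})$: the order statistics of $|Z_1|,\dots,|Z_n|$ are unaffected, so only the index-$n$ contributions to $\Var(S_1)$, $\Var(S_2)$ and $\Cov(S_1,S_2)$ have to be recomputed, with corrections governed by quantities like $\Cov(R_j,R_n)$; re-assembling and simplifying gives $\Var(X_\inv^D)$. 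I expect this last bookkeeping — isolating precisely which covariance terms the constrained final sign changes and checking that the corrections combine into the claimed formula — to be the main obstacle, the $B_n$ computation and the individual probability evaluations being routine once the decomposition is in place.
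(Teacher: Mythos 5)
Your plan is essentially the paper's own proof. The appendix (Subsection~\ref{sec9.4}) uses exactly the decomposition $X_\inv^B=X^++X^-+X^\circ$ and evaluates the same low-dimensional sign/order probabilities by case distinction; your reorganisation into variances and covariances, with the exchangeability shortcuts ($\PP(Z_i>Z_j\mid Z_i+Z_j<0)=\tfrac12$, the per-triple $\pm\tfrac{pq}{3}$ cancellation), is a cleaner way to do the same bookkeeping. Your intermediate values are all correct: $\E\bigl(F_p(-Z_1)^2\bigr)-p^2=\tfrac{pq}{3}$, hence $\Var(S_2)=\tfrac{pq}{6}n(n-1)(2n-1)$, and $\Cov(S_1,S_2)=\Cov(S_1,S_3)=0$, $\Cov(S_2,S_3)=\binom{n}{2}pq$.

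Be warned, however, that carrying out your plan will \emph{not} reproduce the displayed polynomial. Assembling your pieces gives $\Var(X_\inv^B)=\tfrac{1}{72}n(n-1)(2n+5)+\tfrac{pq}{6}n(n-1)(2n-1)+n^2pq$, i.e.\ coefficients $\tfrac{1}{24}+\tfrac{pq}{2}$ for $n^2$ and $-\tfrac{5}{72}+\tfrac{pq}{6}$ for $n$, whereas the lemma displays cubic-in-$p$ expressions. The two agree at $p\in\{0,\tfrac12,1\}$ but not otherwise: direct enumeration of $B_2$ at $p=\tfrac14$ gives $\Var(X_\inv^B)=\tfrac{19}{16}$, matching your formula and not the lemma's value $\tfrac{29}{32}$; moreover the sign flip $Z_i\mapsto -Z_i$ sends $X_\inv^B$ to $n^2-X_\inv^B$ and $\GR(p)$ to $\GR(q)$, so the variance must be symmetric under $p\leftrightarrow q$, which the displayed lower-order coefficients are not. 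The slip in the paper's computation is traceable, e.g.\ to $\PP(Z_i>Z_j,\,-Z_i>Z_j)$ being evaluated as $\tfrac{p^2}{2}+p^2q$ where the mixed-sign contribution should be $\tfrac{pq}{2}$ rather than $p^2q$ (your conditional-symmetry argument gives the correct value $\tfrac p2$ immediately), and similarly in $\E(X^-X^\circ)$. Only the (correct) cubic coefficient is used anywhere else in the paper, so nothing downstream is affected, but you should not assume your bookkeeping is at fault when the sub-leading terms disagree. Finally, on $D_n$ your concern about the forced last sign is legitimate in principle, but the paper's proof simply drops $X^\circ$ and computes with $n$ i.i.d.\ $\GR(p)$ variables as in \eqref{7}; that is the only reading under which the displayed $D_n$ formula can be meant, so the conditioning argument you anticipate as the main obstacle is not needed to match the paper.
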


For the variance of $\hat{X}_\inv$ on $B_n$ and $D_n$, we get the same leading term as in Lemma~\ref{lemma5.4}, regardless of $p$. Hence, we obtain the H\'{a}jek approximation statement from Lemma~\ref{lemma2.4} on the groups $B_n$ and $D_n$ with $p$-bias. 

\begin{lemma}[see Subsection \ref{sec9.5} for the proof] \label{lemma5.5}
    On the $p$-biased (even-)signed permutation groups, we also have
    \[\Var(\hat{X}_\inv) = \left(-\frac{1}{3}p^2 + \frac{1}{3}p + \frac{1}{36}\right)n^3 + O(n^2), \]
    so $\hat{Y}_\inv = Y_\inv + o_{\mathbb{P}}(1)$ applies according to Theorem~\ref{thm2.2}.  
\end{lemma}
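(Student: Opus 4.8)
The plan is to compute the H\'{a}jek projection $\hat{X}_\inv$ of $X_\inv$ on $B_n$ (respectively $D_n$) with respect to the i.i.d.\ variables $Z_1,\ldots,Z_n \sim \GR(p)$, then read off the leading term of its variance and invoke Theorem~\ref{thm2.2} together with Lemma~\ref{lemma5.4}. First I would use the representation \eqref{6} (or \eqref{7}) and linearity of conditional expectation to write $\E(X_\inv \mid Z_k)$ as a sum of three pieces coming from $\sum_{i<j}\mathbf{1}\{Z_i > Z_j\}$, $\sum_{i<j}\mathbf{1}\{-Z_i > Z_j\}$ and (on $B_n$ only) $\sum_i \mathbf{1}\{Z_i < 0\}$. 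For the first piece the conditional probabilities are $\P(Z_i > Z_j \mid Z_k)$, which equal a constant $\P(Z_1 > Z_2) = 1/2$ when $k \notin \{i,j\}$, equal $F_p(Z_k)$ when $k=j$, and equal $1 - F_p(Z_k)$ when $k=i$; the combinatorial bookkeeping (there are $k-1$ indices below $k$ and $n-k$ above) mirrors the proof of Lemma~\ref{lemma2.4}. For the second piece one similarly gets $\P(-Z_i > Z_j \mid Z_k)$, which is the constant $p$ when $k \notin \{i,j\}$ (by the computation already carried out in the remark preceding Lemma~\ref{lemma5.4}), is $F_p(-Z_k)$ when $k=i$, and is $1 - F_p(-Z_k)$ when $k=j$; note $F_p(-z) = p - qz + p\mathbf{1}\{\ldots\}$ must be handled via the explicit piecewise form of $F_p$, but after conditioning only the linear-in-$Z_k$ contributions survive the projection. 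The third piece contributes only $\mathbf{1}\{Z_k < 0\}$ in the $k$-th conditional expectation, which is a bounded function of $Z_k$ and hence contributes only $O(n)$ to $\Var(\hat{X}_\inv)$.

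Collecting terms, $\hat{X}_\inv = \mathrm{const} + \sum_{k=1}^n g_k(Z_k)$ where each $g_k$ is an affine function of $Z_k$ plus a bounded ($O(1)$) term, and the coefficient of $Z_k$ in the affine part is a linear polynomial in $k$ of the form $a(n-2k+1) + b$ for constants $a,b$ depending on $p$ — here the $\Inv^+$ part contributes the familiar $(n-2k+1)$ with its $F_p$-variance factor, while the $\Inv^-$ part contributes another linear-in-$k$ term whose leading coefficient is likewise $O(n)$ and has a $p$-dependent weight. Since the $Z_k$ are independent, $\Var(\hat{X}_\inv) = \sum_{k=1}^n \Var(g_k(Z_k))$, and the dominant contribution is $\sum_{k=1}^n (\text{linear in }k)^2 \cdot \Var(\GR(p)\text{-type variable}) = \Theta(n^3)$, with the $O(n)$ bounded pieces and cross terms absorbed into $O(n^2)$. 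The precise leading coefficient should come out to $-\tfrac13 p^2 + \tfrac13 p + \tfrac{1}{36}$, matching the cubic coefficient of $\Var(X_\inv^B)$ and $\Var(X_\inv^D)$ in Lemma~\ref{lemma5.4}; I would verify this by a direct (if slightly tedious) evaluation of $\int z\, dF_p$, $\int z^2\, dF_p$ and the relevant second moments of $F_p(Z_k)$, $F_p(-Z_k)$.

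The main obstacle is the careful algebra in the $\Inv^-$ piece: unlike the symmetric-group case, the events $\{-Z_i > Z_j\}$ are not symmetric in $i,j$, so conditioning on $Z_k$ with $k=i$ versus $k=j$ gives genuinely different functions $F_p(-Z_k)$ and $1 - F_p(-Z_k)$, and one must track how the counts $k-1$ and $n-k$ pair with these to produce the right linear-in-$k$ coefficient — and crucially to confirm that the $\Inv^+$ and $\Inv^-$ contributions combine (rather than cancel) to give the stated cubic coefficient. A useful sanity check is to specialize to $p=0$, where $\GR(0)=U(0,1)$, the $\Inv^-$ and $\Inv^\circ$ terms vanish identically, and the formula must reduce to the $\tfrac{1}{36}n^3$ of Lemma~\ref{lemma2.4}; and to $p=1/2$, where $\GR(1/2)=U(-1,1)$ and the leading coefficient becomes $\tfrac{1}{36} + \tfrac{1}{12} = \tfrac{1}{9}$, which should agree with \cite[Corollaries 3.2 and 4.2]{kahle2020counting}. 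Once the cubic coefficient of $\Var(\hat{X}_\inv)$ is shown to equal that of $\Var(X_\inv)$ from Lemma~\ref{lemma5.4}, the asymptotic equivalence $\Var(\hat{X}_\inv) \sim \Var(X_\inv)$ is immediate, and Theorem~\ref{thm2.2} delivers $\hat{Y}_\inv = Y_\inv + o_{\mathbb{P}}(1)$.
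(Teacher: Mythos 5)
Your overall strategy coincides with the paper's: compute $\E(X_\inv\mid Z_k)$ explicitly for $Z_k\sim\GR(p)$, sum the variances of these independent summands, identify the cubic coefficient, match it against Lemma~\ref{lemma5.4}, and invoke Theorem~\ref{thm2.2} (the paper organizes the computation through $U_k=|Z_k|$ and sign indicators rather than through $F_p$, but that is only a change of coordinates). However, two of the conditional probabilities you state are wrong. First, for the $\Inv^+$ piece the roles of $i$ and $j$ are swapped: $\P(Z_i>Z_j\mid Z_k)=F_p(Z_k)$ when $k=i$ and $1-F_p(Z_k)$ when $k=j$, exactly as in Lemma~\ref{lemma2.4}. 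Second, and more consequentially, the event $\{-Z_i>Z_j\}=\{Z_i+Z_j<0\}$ is symmetric in $i$ and $j$, so conditioning on either index gives the \emph{same} function: $\P(-Z_i>Z_j\mid Z_k)=F_p(-Z_k)$ both for $k=i$ and for $k=j$; your ``$1-F_p(-Z_k)$ when $k=j$'' is incorrect. With the corrections, the $k$-th summand on $D_n$ is $(n-2k+1)F_p(Z_k)+(n-1)F_p(-Z_k)+\mathrm{const}$, so the $\Inv^-$ contribution carries the coefficient $n-1$, not another linear-in-$k$ one.

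A further structural claim fails: for $p\notin\{0,1/2\}$ each $g_k$ is \emph{not} ``an affine function of $Z_k$ plus a bounded $O(1)$ term.'' Since $F_p$ has different slopes $p$ and $q$ on the two sides of the origin, $F_p(Z_k)$ and $F_p(-Z_k)$ involve $|Z_k|$ (equivalently $\mathbf{1}\{Z_k<0\}\,Z_k$) with coefficients of order $n$, and these non-affine pieces contribute at the cubic level. One must therefore compute $\Var\bigl(g_k(Z_k)\bigr)$ for the full piecewise-affine function — which is precisely what the paper's sign/magnitude case analysis does — rather than taking $(\text{coefficient of }Z_k)^2\Var(Z_k)$. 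Once this is done, using $\Var(F_p(Z_k))=1/12$ (probability integral transform), the identity $F_p(-z)=F_p(z)-z$, and $\sum_{k=1}^n(n-2k+1)=0$, the leading term comes out as $n^3\bigl(\tfrac{1}{36}+\tfrac{pq}{3}\bigr)=\bigl(-\tfrac13 p^2+\tfrac13 p+\tfrac1{36}\bigr)n^3$, matching Lemma~\ref{lemma5.4}; your treatment of the $\Inv^\circ$ term on $B_n$, your sanity checks at $p=0$ and $p=1/2$, and the final appeal to Theorem~\ref{thm2.2} are fine.
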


The leading term, as a function of $p$, has no zeros in $[0,1]$ and assumes its global maximum at $p=1/2,$ which is the unbiased case. This means that the order of $\Var(X_\inv)$ and $\Var(\hat{X}_\inv)$ is guaranteed to be cubic in $n$. 

From Lemma~\ref{lemma5.5}, we obtain an extension of Corollary~\ref{cor2.5}, which we present as a general statement on all three families of classical Weyl groups.

\begin{corollary} \label{cor5.6}
    Let $W$ be a classical Weyl group of rank $n$, that is, $W\in \{S_n,B_n,D_n\}$. Set
    $Z_0 := -\infty$ if $W = S_n$, $Z_0:= 0$ if  $W = B_n$  and $Z_0:= -Z_2$ if $W=D_n$. Then,
    {\small\begin{align*}
        \begin{pmatrix} \!\hat{X}_\inv \! \\ \!X_\des \! \end{pmatrix} &\!=\! \begin{pmatrix} \E(X_\inv \mid Z_1) \\ \mathbf{1}\{Z_0 > Z_1\} \end{pmatrix} + \ldots + \begin{pmatrix} \E(X_\inv \mid Z_{n-1}) \\ \mathbf{1}\{Z_{n-2} > Z_{n-1}\} \end{pmatrix} + \begin{pmatrix} \E(X_\inv \mid Z_n) \!-\! (n\!-\! 1)\E(X_\inv) \\ \mathbf{1}\{Z_{n-1}>Z_n\} \end{pmatrix}
    \end{align*}}
    \hspace{-2.7mm} is a $1$-dependent decomposition of $(\hat{X}_\inv, X_\des)^\top$. On $B_n$ and $D_n$, this applies with any sign bias.
\end{corollary}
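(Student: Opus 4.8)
The plan is to verify the three features of the claim separately: that the first coordinate of the displayed sum equals $\hat{X}_\inv$, that the second coordinate equals $X_\des$, and that the summands form a $1$-dependent array. The first two points are essentially bookkeeping; the $1$-dependence on $D_n$ is where the real work lies.

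For the first coordinate, adding up the first components of the $n$ summands gives $\sum_{k=1}^{n-1}\E(X_\inv\mid Z_k)+\bigl(\E(X_\inv\mid Z_n)-(n-1)\E(X_\inv)\bigr)=\sum_{k=1}^n\E(X_\inv\mid Z_k)-(n-1)\E(X_\inv)$, which is exactly $\hat{X}_\inv$ by Definition~\ref{def2.1}; nothing more is needed, although on $B_n$ and $D_n$ one may if desired make each $\E(X_\inv\mid Z_k)$ explicit by repeating the case analysis of Lemma~\ref{lemma2.4} on the representations \eqref{6}--\eqref{7}. For the second coordinate, summing the indicators leaves $\mathbf{1}\{Z_0>Z_1\}+\sum_{i=1}^{n-1}\mathbf{1}\{Z_i>Z_{i+1}\}$, and I would then substitute the prescribed boundary value: $Z_0=-\infty$ deletes the first indicator and reproduces \eqref{1.2} on $S_n$; $Z_0=0$ turns it into $\mathbf{1}\{Z_1<0\}$ and reproduces \eqref{13a} on $B_n$; $Z_0=-Z_2$ turns it into $\mathbf{1}\{-Z_2>Z_1\}$ and reproduces \eqref{13b} on $D_n$. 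All of these are pointwise identities in $(Z_1,\dots,Z_n)$, hence unaffected by the sign bias $p$: the parameter $p$ only changes the common law $\GR(p)$ of the $Z_i$, and thereby the explicit shape of $\E(X_\inv\mid Z_k)$, not the validity of the decomposition.

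For the $1$-dependence, the observation is that $\E(X_\inv\mid Z_k)$ is a function of $Z_k$ only and $\mathbf{1}\{Z_{k-1}>Z_k\}$ a function of $(Z_{k-1},Z_k)$ only. On $S_n$ and $B_n$, where $Z_0$ is deterministic, the $k$-th summand is therefore measurable with respect to $\{Z_{k-1},Z_k\}$, so summands whose indices differ by at least $2$ involve disjoint blocks of the i.i.d.\ sequence and are independent, which is $1$-dependence. I expect $D_n$ to be the one genuine obstacle, and it needs two adjustments. First, the boundary indicator $\mathbf{1}\{-Z_2>Z_1\}$ sitting in the first summand also involves $Z_2$, so the first and third summands are no longer independent; this is repaired by absorbing that term into the second summand, which already depends only on $(Z_1,Z_2)$, after which each summand again depends on at most $(Z_{k-1},Z_k)$. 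Second, on $D_n$ the last sign is fixed by the parity constraint rather than drawn freely, so $\mathrm{sign}(Z_n)$ is a function of the remaining signs and the $Z_k$ are not jointly independent; the way I would deal with this is to pass to the coupled model in which all $n$ signs are drawn independently — which differs from the $D_n$ model in at most one sign and so perturbs $X_\inv$ by $O(n)$ and $X_\des$ by $O(1)$, negligible against $\sigma(X_\inv)=\Theta(n^{3/2})$ and $\sigma(X_\des)=\Theta(n^{1/2})$ — where the $Z_k$ are genuinely i.i.d.\ $\GR(p)$ and the argument above applies verbatim. The remaining checks are routine.
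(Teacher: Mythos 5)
Your verification of the two coordinate identities is exactly the (implicit) argument behind Corollary~\ref{cor5.6}: the paper offers no separate proof, the first coordinate is Definition~\ref{def2.1} read off term by term, and the second reduces to \eqref{1.2}, \eqref{13a} or \eqref{13b} after substituting the prescribed $Z_0$. Your treatment of $1$-dependence on $S_n$ and $B_n$ is likewise the intended one. Your first observation about $D_n$ is a genuine catch: as displayed, the first summand is a function of $(Z_1,Z_2)$ and the third of $(Z_2,Z_3)$, and these are in general \emph{not} independent --- for i.i.d.\ $Z_i\sim\GR(p)$ one computes $\P(-Z_2>Z_1,\,Z_2>Z_3)=p\,(q/6+p/2)$, which differs from $\P(-Z_2>Z_1)\P(Z_2>Z_3)=p/2$ for all $p\in(0,1)$ --- so the sequence as written is only $2$-dependent on $D_n$. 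Either your regrouping (moving $\mathbf{1}\{-Z_2>Z_1\}$ into the second summand, whose window is already $\{Z_1,Z_2\}$) or simply reading the claim as ``$m$-dependent with $m=2$'' repairs this, and all downstream uses of Proposition~\ref{thm3.1} only require a fixed $m$, so nothing else is affected.

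Your second adjustment for $D_n$, the coupling that redraws the last sign independently, is both unnecessary and incompatible with what the corollary asserts. The objects being decomposed are the representations \eqref{7} and \eqref{13b}, which are already stated in terms of i.i.d.\ $Z_1,\ldots,Z_n\sim\GR(p)$; the corollary is an exact algebraic identity in that model, and the subsequent computations (Lemmas~\ref{lemma5.5} and~\ref{lemma5.6}) all take this i.i.d.\ structure for granted. Whether the i.i.d.\ model faithfully reproduces the parity-constrained $p$-biased measure on $D_n$ is a question about the validity of \eqref{7} and \eqref{13b} themselves, not about the decomposition; and your coupling cannot settle the corollary in the form stated, since a perturbation of $X_\inv$ by $O(n)$ yields only a distributional approximation, whereas the claim is an exact pointwise identity together with exact independence of well-separated blocks. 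Work entirely in the i.i.d.\ model and the coupling is superfluous; insist on the group measure and the issue must instead be resolved at the level of \eqref{7} and \eqref{13b} by an exact distributional identity.
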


\begin{lemma}[see Subsections~\ref{sec9.6},~\ref{sec9.7} for the proof] \label{lemma5.6}
On both of the groups $B_n$ and $D_n$ with $p$-bias, it holds that 
\begin{enumerate}
\item[\emph{a)}] $\rho(X_\inv, X_\des) \longrightarrow 0$ as $\nto$, and 
\begin{align*}
    \Cov(X_\inv^B, X_\des^B) &= (n-1)\left(\frac{p^2}{2} + p^2q - \frac{p}{2} + \frac{1}{4}\right) + (p - p^2), \\
    \Cov(X_\inv^D, X_\des^D) &= (n-1)\left(\frac{p^2}{2} + p^2q - \frac{p}{2} + \frac{1}{4}\right) + p^2.
\end{align*}  
\item[\emph{b)}] $\Cov(\hat{X}_\inv, X_\des) = \Theta(1/n),$ so again, $\rho(\hat{X}_\inv, X_\des) \longrightarrow 0$ as $\nto$.
\end{enumerate}
\end{lemma}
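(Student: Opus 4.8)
\textbf{Proof plan for Lemma~\ref{lemma5.6}.}

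The plan is to compute the relevant covariances directly from the $1$-dependent representations in \eqref{6}--\eqref{13b}, exploiting the block structure of the sums and the independence of the $Z_i = U_i R_i$. For part a), I would write $X_\inv = A + B + C$ where $A = \sum_{i<j}\mathbf{1}\{Z_i > Z_j\}$, $B = \sum_{i<j}\mathbf{1}\{-Z_i > Z_j\}$, and $C$ is either $\sum_{i=1}^n \mathbf{1}\{Z_i < 0\}$ (on $B_n$) or $0$ (on $D_n$); similarly split $X_\des = D + E$ with $D = \sum_{i=1}^{n-1}\mathbf{1}\{Z_i > Z_{i+1}\}$ and $E = \mathbf{1}\{Z_1 < 0\}$ (on $B_n$) or $E = \mathbf{1}\{-Z_2 > Z_1\}$ (on $D_n$). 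Then $\Cov(X_\inv, X_\des)$ expands into a handful of cross-covariances. The term $\Cov(A,D)$ is essentially the symmetric-group computation of Lemma~\ref{lemma2.6}a), but with $U(0,1)$ replaced by $\GR(p)$: only adjacent pairs in $A$ share an index with a term of $D$, so this reduces to evaluating $\P(Z_{i-1} > Z_i > Z_{i+1})$ and the like in terms of $p$ via the explicit density $f_p$. The remaining cross-terms $\Cov(A,E), \Cov(B,D), \Cov(B,E), \Cov(C,D), \Cov(C,E)$ each involve only $O(n)$ nonzero summands (again by the adjacency/shared-index structure) and are computed the same way, using that $\P(-Z_i > Z_j) = p$ (already established in the excerpt) and small explicit integrals of $f_p$ over the appropriate regions. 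Collecting the linear-in-$n$ and constant contributions yields the stated formulas; dividing by $\sqrt{\Var(X_\inv)\Var(X_\des)} = \Theta(n^2)$ (using Lemma~\ref{lemma5.4} and the linear order of $\Var(X_\des)$, which should be recorded along the way) gives $\corr(X_\inv, X_\des) = \Theta(1/n) \to 0$.

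For part b), I would use the explicit H\'{a}jek projection from Corollary~\ref{cor5.6}: $\hat{X}_\inv - \E(\hat{X}_\inv) = \sum_{k=1}^n \bigl(\E(X_\inv\mid Z_k) - \E(X_\inv)\bigr)$, where $\E(X_\inv\mid Z_k)$ is an affine function of $Z_k$ whose slope is $\Theta(n)$ for most $k$ but telescopes as in Remark~\ref{rem2.5}. Crucially, $\hat{X}_\inv$ is a \emph{linear} combination of the independent variables $Z_1,\ldots,Z_n$, so $\Cov(\hat{X}_\inv, X_\des) = \sum_k a_k^{(n)} \Cov(Z_k, X_\des)$ for coefficients $a_k^{(n)}$; since $X_\des$ depends only on $Z_1, Z_2$ and the adjacent differences $\mathbf{1}\{Z_i > Z_{i+1}\}$, and $\Cov(Z_k, \mathbf{1}\{Z_i > Z_{i+1}\})$ is nonzero only when $k \in \{i, i+1\}$, almost all terms cancel exactly as in the symmetric-group case, leaving an expression of order $1/n$ (after the $\sigma(\hat{X}_\inv) = \Theta(n^{3/2})$ normalization is \emph{not} applied --- note the claim is about the unnormalized covariance, so one instead tracks the $\Theta(n)$ coefficients against $\Theta(1/n^2)$ contributions from the telescoping, or more simply observes $\Cov(\hat X_\inv, X_\des)$ is a bounded-times-$O(1)$ quantity and $\corr = \Cov/\Theta(n^2) \to 0$). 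Then $\corr(\hat{X}_\inv, X_\des) = \Cov(\hat{X}_\inv, X_\des)/\sqrt{\Var(\hat{X}_\inv)\Var(X_\des)} = \Theta(1/n) \to 0$ by Lemma~\ref{lemma5.5}.

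The main obstacle is the bookkeeping in part a): on $B_n$ and $D_n$ one must carefully track which pairs of index sets overlap across the six cross-covariance terms, handle the border effects at $i=1$ and $i=n$ (the asymmetric descent terms $E$ and the $\Inv^\circ$ term $C$ live near the boundary), and evaluate the resulting double and triple integrals of the piecewise-constant density $f_p$ over regions cut out by inequalities like $-z_i > z_j$ --- these are elementary but error-prone, and the $D_n$ case differs from the $B_n$ case precisely in these low-order terms, which is why the constant terms in the two formulas differ while the leading terms agree. I would organize the computation by first doing $B_n$ in full, then noting which terms change for $D_n$ (drop $C$ entirely, replace $E = \mathbf{1}\{Z_1<0\}$ by $E = \mathbf{1}\{-Z_2 > Z_1\}$), rather than redoing everything from scratch.
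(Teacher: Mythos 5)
Your plan follows essentially the same route as the paper's Subsections~\ref{sec9.6} and~\ref{sec9.7}: split $X_\inv$ into $X^+ + X^- + X^\circ$ and $X_\des$ into the adjacent-pair sum plus the extra boundary term, kill all cross-covariances with disjoint index sets by independence, and track the exceptional/boundary cases, which is exactly where the $B_n$ and $D_n$ constants diverge. Two remarks. First, a small simplification you can take from the paper: $\Cov(A,D)$ requires no integration against $f_p$ at all, because events of the form $\{Z_i > Z_j\}$ and $\{Z_{i-1}>Z_i>Z_j\}$ involve only the relative order of i.i.d.\ continuous variables, so their probabilities are the same as in the uniform case and the symmetric-group answer $(n-1)/4$ carries over verbatim. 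Second, and more substantively, your part~b) rests on the claim that $\hat{X}_\inv$ is a \emph{linear} combination of $Z_1,\ldots,Z_n$, so that $\Cov(\hat{X}_\inv,X_\des)=\sum_k a_k^{(n)}\Cov(Z_k,X_\des)$. That is true on $S_n$ but false on $B_n$ and $D_n$: there $\E(X_\inv\mid Z_k)$ is only piecewise linear in $Z_k$, involving $|Z_k|$ and $\mathbf{1}\{Z_k<0\}$ separately (see the terms $U_j\mathbf{1}\{Z_j>0\}$, $(2-U_j)\mathbf{1}\{Z_j<0\}$ and $(2p-1)U_j$ in \eqref{23a}--\eqref{23c}). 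The cancellation argument survives, but it must be run at the level of general functions $g_k(Z_k)$ rather than scalar multiples of $Z_k$, and the three pieces must be integrated against $f_p$ separately, which is what the paper does. Your order-of-magnitude bookkeeping in b) is also muddled: the unnormalized covariance comes out of \emph{linear} order $\Theta(n)$ (as on $S_n$, where it equals $(n-1)/6$), not $O(1)$ and not $\Theta(1/n)$; the quantity that is $\Theta(1/n)$ is the correlation, after dividing by $\sqrt{\Var(\hat{X}_\inv)\Var(X_\des)}=\Theta(n^2)$. (The lemma's statement of b) appears to contain the same slip, as its own proof in Subsection~\ref{sec9.7} derives a linear-order covariance, so your hesitation there was warranted, but your proposal should commit to the correct order rather than hedge.)
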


With all this information at hand, we can use Proposition~\ref{thm3.1} to obtain the CLT of $(X_\inv, X_\des)^\top$ on $B_n$ and $D_n$ (while we could have done so for $S_n$, this is a novel contribution only for $B_n$ and $D_n$). Moreover, all arguments in the proof of the extreme value limit Theorem~\ref{thm4.3} apply on $B_n$ and $D_n$.

\begin{theorem} \label{thm4.7} For the joint statistic $(X_\inv, X_\des)^\top$ on signed or even-signed permutation groups with $p$-bias, the following hold. 
\begin{enumerate}
    \item[a)] $(X_\inv, X_\des)_{n\ge 1}^\top$ satisfies the CLT, i.e.,
    \[
        (Y_\inv, Y_\des)^\top = \left(\frac{X_\inv - \E(X_\inv)}{\sqrt{\Var(X_\inv)}}, \frac{X_\des - \E(X_\des)}{\sqrt{\Var(X_\des)}}\right)^\top \overset{\D}{\longrightarrow} \NN_2(0, \mathrm{I}_2)\,, 
    \]
    as $n\to\infty.$
    \item[b)] The statement of Theorem~\ref{thm4.3} holds if $W_n$ is an arbitrary sequence of classical Weyl groups with $\rk(W_n) = n$ for all $ n \in \N$ and with $k_n$ chosen so that $k_n\log(k_n) = o(n)$.
\end{enumerate}
\end{theorem}

\begin{proof}
a): Due to Corollary~\ref{cor2.5} and Slutsky's Theorem, it suffices to show that $(\hat{Y}_\inv, Y_\des)^\top \overset{\D}{\longrightarrow} \NN_2(0, \mathrm{I}_2)$. By \eqref{eq:1}, we can find $c_i^{(n)}, d_i^{(n)},$ $i=1, \ldots, n$, such that
\begin{align}
\begin{pmatrix} \hat{X}_\inv -\E(\hat{X}_\inv)  \\ X_\des-\E(X_\des) \end{pmatrix} &= 
\sum_{k=2}^n \begin{pmatrix} c_k^{(n)}Z_k - d_k^{(n)} \\ \mathbf{1}\{Z_{k-1} > Z_k\} - 1/2 \end{pmatrix} + \begin{pmatrix} c_1^{(n)}Z_1 - d_1^{(n)} \\ \textbf{1}\{Z_0 > Z_1\} - \PP(Z_0 > Z_1) \end{pmatrix}
\end{align}
is a sum of $1$-dependent random vectors with mean zero. Setting 
\begin{align*} 
    X_1^{(n)}&:= \begin{pmatrix}  \left(c_1^{(n)}Z_k - d_1^{(n)}\right)/\sqrt{\Var(\hat{X}_\inv)} \\ \left(\textbf{1}\{Z_0 > Z_1\} - \PP(Z_0 > Z_1)\right)/\sqrt{\Var(X_\des)} \end{pmatrix}, \\
    X_k^{(n)}&:= \begin{pmatrix} \left(c_k^{(n)}Z_k - d_k^{(n)}\right)/\sqrt{\Var(\hat{X}_\inv)} \\ (\mathbf{1}\{Z_k > Z_{k+1}\} -1/2)/\sqrt{\Var(X_\des)} \end{pmatrix}, \qquad k=2,\ldots,n\,,
\end{align*}
we obtain the representation $(\hat{Y}_\inv, Y_\des)^\top = \sum_{k=1}^n X_k^{(n)}=:X^{(n)}$. The covariance matrix of $X^{(n)}$ (see \eqref{eq4}) is given by $\Sigma^{(n)}=\begin{pmatrix} 1 & \rho_n \\ \rho_n & 1 \end{pmatrix}$, where $\rho_n:=\rho(\hat{X}_\inv, X_\des)$. An application of Corollary~\ref{cor3.1} yields that for $\mathcal{N}_n\sim \NN(0,\Sigma^{(n)})$, 
\begin{align*}
    \sup_{u \in \R^2} |\PP(X^{(n)} \leq u) - \PP(\mathcal{N}_n \leq u)| &\le r_n(\A^{\text{re}})=
    O\left(n^{-1/6} \log(n)^{7/6}\right)\,.
\end{align*}
In combination with the fact that the correlation $\rho_n$ vanishes in the limit (see Corollary~\ref{cor2.7}), we can conclude that $(\hat{Y}_\inv, Y_\des)^\top \overset{\D}{\longrightarrow} \NN_2(0, \mathrm{I}_2)$. The proof of b) is the same as the proof of Theorem~\ref{thm4.3}.
\end{proof}


At last, we consider direct products of classical Weyl groups. Let $W = \prod_{i=1}^l W_i$ be such a product, where each $W_i$ is one of $S_n, B_n$ or $D_n$, and $l$ is a fixed positive integer. By \cite[Lemma 2.2]{kahle2020counting}, we know that 
\[
    X_\inv^W = \sum_{i=1}^l X_\inv^{W_i}
\]
is a sum of independent random variables, implying $\Var(X_\inv^W) = \sum_{i=1}^l \Var(X_\inv^{W_i})$. Let $X_\inv^{W_i}$ be constructed from variables $Z_1^{(i)}, \ldots, Z_{n_i}^{(i)}$, where $n_i$ denotes the number of letters on which the group $W_i$ acts, and each $Z_j^{(i)}$ is $\GR(p_i)$ for some $p_i \in [0,1]$, and the entire collection of all  $Z_j^{(i)}$ is independent. Setting $n := n_1 + \ldots + n_l$, the overall \Hajek\ projection $\hat{X}_\inv^W$ of $X_\inv^W$ is
\[
    \hat{X}_\inv^W = \sum_{i=1}^l \sum_{j=1}^{n_i} \E\left(X_\inv^W \mid Z_j^{(i)}\right) - (n-1)\E(X_\inv^W)\,,
\]
where 
$\E\left(X_\inv^W \mid Z_j^{(i)}\right) = \sum_{k=1}^l \E\left(X_\inv^{W_k} \mid Z_j^{(i)}\right)$.
If $k \not= i,$ then $X_\inv^{W_k}$ is independent of $Z_j^{(i)},$ which means that in this case $\E\left(X_\inv^{W_k} \mid Z_j^{(i)}\right) = \E(X_\inv^{W_k})$ is constant. We therefore obtain 
\[
    \Var(\hat{X}_\inv^W) = \sum_{i=1}^l \sum_{j=1}^{n_i} \Var\left(\E\left(X_\inv^{W_i} \mid Z_j^{(i)}\right)\right) = \sum_{i=1}^l \Var(\hat{X}_\inv^{W_i}).
\]
For any $W_i,$ we have $\Var(X_\inv^{W_i}) \sim \Var(\hat{X}_\inv^{W_i})$. Furthermore, all variances are cubic as seen in Lemmas~\ref{lemma2.4},~\ref{lemma5.4},~\ref{lemma5.5} and \cite[Corollary 3.2]{kahle2020counting}, i.e., we have 
\[
    \Var(X_\inv^{W_i}) = c_i n_i^3 + O(n_i^2)=\Var(\hat{X}_\inv^{W_i}), \qquad n_i \to \infty,
\]
where $c_i := -\frac{1}{3}p_i^2 + \frac{1}{3}p_i + \frac{1}{36}$.  It is seen from the calculations in the proofs of Lemmas~\ref{lemma2.4} and~\ref{lemma5.5} that $\Var(X_\inv) - \Var(\hat{X}_\inv) = \Theta(n^2)$. So we can write 
\begin{align}
    \Var(X_\inv^W) &= \sum_{i=1}^l \Big(c_in_i^3 + \alpha_in_i^2 + O(n_i)\Big), & \Var(\hat{X}_\inv^W) &= \sum_{i=1}^l \Big(c_in_i^3 + \beta_in_i^2 + O(n_i)\Big), \label{18}
\end{align}
with $\alpha_i \not= \beta_i$ for all $i$. 

Consider a sequence $(W_n)_{n \in \N}$ of products as introduced above, assuming that the number $l$ of components remains bounded. Then, we see that $\Var(X_\inv^W) \sim \Var(\hat{X}_\inv^W)$ holds as $n\to \infty$, since the cubic terms are equal and cannot be dominated by the quadratic terms. Thus, to obtain the CLT, the H\'{a}jek projection approximation is sufficient and the above considerations show the following extension of Theorem~\ref{thm4.7}. 

\begin{corollary}
On bounded products of classical Weyl groups, it holds that $Y_\inv = \hat{Y}_\inv + o_{\PP}(1)$, and  $(X_\inv, X_\des)^\top$ satisfies the CLT.
\end{corollary}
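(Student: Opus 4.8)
The plan is to prove the two assertions in turn, the first via the \Hajek-projection criterion of Theorem~\ref{thm2.2} and the second via the Gaussian approximation of Corollary~\ref{cor3.1}, following the blueprint of Theorem~\ref{thm3.3}. For the statement $Y_\inv = \hat Y_\inv + o_\PP(1)$, I only need to verify the variance equivalence $\Var(\hat X_\inv^W)\sim\Var(X_\inv^W)$ and then quote Theorem~\ref{thm2.2}. By~\eqref{18}, both variances share the cubic part $\sum_{i=1}^l c_i n_i^3$ and differ only by a term of order at most $\sum_{i=1}^l n_i^2\le n^2$. Since $l$ is bounded, $\max_i n_i\ge n/l$, and since $c_i=-\tfrac13 p_i^2+\tfrac13 p_i+\tfrac1{36}$ has no zero on $[0,1]$ (its range is $[\tfrac1{36},\tfrac19]$), we get $\sum_i c_i n_i^3\ge\tfrac1{36}(n/l)^3$, so the cubic term dominates and $\Var(\hat X_\inv^W)/\Var(X_\inv^W)=1+O(l^3/n)\to1$. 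This is the one estimate that genuinely uses boundedness of $l$, and it is where I expect the only real subtlety to lie: if $l$ were allowed to grow, the quadratic corrections $\sum_i\alpha_in_i^2$ and $\sum_i\beta_in_i^2$ could become comparable to the cubic term and the whole argument would collapse.

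For the CLT, I would first reduce, via Slutsky's theorem and the first assertion, to showing $(\hat Y_\inv, Y_\des)^\top \overset{\D}{\longrightarrow}\NN_2(0,\mathrm I_2)$. The key input is a $1$-dependent decomposition of $(\hat X_\inv^W, X_\des^W)^\top$. Because the Coxeter graph of $W$ is the disjoint union of those of the $W_i$, we have $X_\des^W=\sum_{i=1}^l X_\des^{W_i}$ (cf.\ \cite[Lemma~2.2]{kahle2020counting}) alongside $\hat X_\inv^W=\sum_{i=1}^l\hat X_\inv^{W_i}$, and concatenating the per-block decompositions from Corollary~\ref{cor5.6} block by block realizes $(\hat X_\inv^W, X_\des^W)^\top$ as a sum of $n=n_1+\dots+n_l$ mean-zero vectors in $\R^2$ that is still $1$-dependent: inside a block this is exactly Corollary~\ref{cor5.6}, and two summands taken from distinct blocks are independent since the underlying $\GR(p_i)$-variables are.

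Next I would rescale the two coordinates by $\sigma(\hat X_\inv^W)=\Theta(n^{3/2})$ and $\sigma(X_\des^W)=\Theta(n^{1/2})$ and check the hypotheses of Corollary~\ref{cor3.1} with $\mathfrak p=2$ and $m=1$: the coefficients occurring in block $i$ are $O(n)$, so each rescaled summand is bounded and of size $O(n^{-1/2})$, whence Condition~1 holds with $\gamma_1=2$ and $B_n=O(1)$, and Condition~2 holds with $K=1$ because both marginal variances equal $1$ by construction. Corollary~\ref{cor3.1} then gives $\sup_{u\in\R^2}|\PP(X^{(n)}\le u)-\PP(\mathcal N_n\le u)|=O(n^{-1/6}\log(n)^{7/6})\to0$, where $\mathcal N_n\sim\NN_2(0,\Sigma^{(n)})$ with $\Sigma^{(n)}=\begin{pmatrix}1&\rho_n\\\rho_n&1\end{pmatrix}$ and $\rho_n=\corr(\hat X_\inv^W, X_\des^W)$.

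Finally I would verify $\rho_n\to0$: the cross-block covariances vanish by independence, so $\Cov(\hat X_\inv^W, X_\des^W)=\sum_{i=1}^l\Cov(\hat X_\inv^{W_i}, X_\des^{W_i})=O(n)$ by Lemmas~\ref{lemma2.6} and~\ref{lemma5.6}, while $\Var(\hat X_\inv^W)=\Theta(n^3)$ from the first part and $\Var(X_\des^W)=\sum_i\Var(X_\des^{W_i})=\Theta(n)$, giving $\rho_n=O(n^{-1/2})\to0$. Hence $\Sigma^{(n)}\to\mathrm I_2$, so $\mathcal N_n\overset{\D}{\longrightarrow}\NN_2(0,\mathrm I_2)$; combining this with the vanishing approximation error yields $(\hat Y_\inv, Y_\des)^\top\overset{\D}{\longrightarrow}\NN_2(0,\mathrm I_2)$, and then Slutsky's theorem together with the first assertion gives $(Y_\inv, Y_\des)^\top\overset{\D}{\longrightarrow}\NN_2(0,\mathrm I_2)$. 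Apart from the variance-domination estimate flagged in the first paragraph, the remaining work is routine bookkeeping — tracking the $1$-dependence across blocks and confirming Conditions~1–2 and the orders of the second moments — all of which reuses Corollaries~\ref{cor3.1} and~\ref{cor5.6} and Lemmas~\ref{lemma2.6} and~\ref{lemma5.6} verbatim.
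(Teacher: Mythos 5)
Your proposal is correct and follows essentially the same route as the paper: the variance equivalence comes from the shared cubic term in \eqref{18} dominating the quadratic corrections (which is exactly where boundedness of $l$ enters), and the CLT is then obtained by concatenating the block-wise $1$-dependent decompositions and reusing the Gaussian approximation argument of Theorem~\ref{thm3.3}. The only slip is cosmetic: $\rho_n=\Cov(\hat X_\inv^W,X_\des^W)/\sqrt{\Var(\hat X_\inv^W)\Var(X_\des^W)}=O(n)/\Theta(n^{2})=O(1/n)$ rather than $O(n^{-1/2})$, but your weaker bound still gives $\rho_n\to 0$, so the conclusion stands.
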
  

To repeat the proof of the extreme value limit Theorem~\ref{thm4.3}, there is another issue to consider, namely the bounds \eqref{p1} and \eqref{p2}, which require a suitable control of
\begin{equation}
    1 - \frac{\Var(\hat{X}_\inv)}{\Var(X_\inv)}. \label{19}
\end{equation}
Since the number of components of $W_n$ is bounded, we can assume w.l.o.g. that the components are sorted decreasingly by rank, meaning $n_1$ is the largest rank with $n_1 = \Theta(n)$. We can also assume that each group has exactly $l$ components (as groups with fewer components can be filled with components of $S_1$, not giving any further inversions and descents).

\begin{theorem}
For fixed $l\in \N$, let $W_n = \prod_{i=1}^l W_{n,i}$ be products of finite Coxeter groups with $\rk(W_n) = n$ $\forall n \in \N$. Let $k_n\log(k_n) = o(n)$ and let $(X_\inv^{(j)}, X_\des^{(j)})^\top,$ $j=1, \ldots, k_n$ be independent copies of $(X_\inv, X_\des)^\top$ on $W_n$. Let $M_{n,\inv}, M_{n,\des}$ be defined as in \eqref{eq:minv}. Then, the statement of Theorem \emph{\ref{thm4.3}} applies for $(W_n)_{n \in \N}$, that is, we obtain \eqref{eq:mainevlt} again.
\end{theorem}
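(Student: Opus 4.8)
The plan is to follow the blueprint of the proof of Theorem~\ref{thm4.3} essentially verbatim, since the only properties of the symmetric group that were actually used there are: (i) a $1$-dependent decomposition of $(\hat{X}_\inv, X_\des)^\top$ into $n$ summands (here the two statistics split into $l$ independent blocks, but we can concatenate the block-wise decompositions into one $1$-dependent array of length $n = n_1 + \dots + n_l$, since components belonging to different blocks are independent and hence trivially jointly $1$-dependent); (ii) the sub-Gaussianity and non-degeneracy needed to apply Proposition~\ref{thm3.1}; (iii) the variance equivalence $\Var(X_\inv) \sim \Var(\hat{X}_\inv)$, which holds by the discussion preceding the statement since the cubic terms in \eqref{18} agree and $l$ is bounded; and (iv) a quantitative bound on the quantity \eqref{19}. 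So I would first record these structural facts, then treat the two halves \eqref{eq:mainevlt2} and \eqref{eq:dfdgs} separately as before.

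For \eqref{eq:dfdgs}, I would reuse the decomposition $|M_{n,\inv} - \hat{M}_n|$ into the two terms $P_1, P_2$ exactly as in the proof of Theorem~\ref{thm4.3}. The union bound plus Markov's inequality again reduces $P_1$ to $\kn \log \kn \,(1 - \Var(\hat{X}_\inv)/\Var(X_\inv))$ and $P_2$ to $\kn \log \kn \,((\sigma(\hat X_\inv) - \sigma(X_\inv))/\sigma(X_\inv))^2$ times a bounded factor. The key is then to estimate \eqref{19}: writing $\Var(X_\inv^W) = \sum_i(c_i n_i^3 + \alpha_i n_i^2 + O(n_i))$ and $\Var(\hat X_\inv^W) = \sum_i(c_i n_i^3 + \beta_i n_i^2 + O(n_i))$ from \eqref{18}, the numerator $\Var(X_\inv^W) - \Var(\hat X_\inv^W) = \sum_i(\alpha_i - \beta_i)n_i^2 + O(n) = O(n^2)$ (the $O$-constant depends on $l$, which is fixed), while the denominator is $\Theta(n^3)$ because $c_i > 0$ for all $p_i \in [0,1]$ and $n_1 = \Theta(n)$. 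Hence \eqref{19} is $O(1/n)$, so $P_1 = \kn\log\kn\, O(1/n) = o(1)$, and since $1 - \sigma(\hat X_\inv)/\sigma(X_\inv) = O(1/n)$ as well, also $P_2 = o(1)$. This gives \eqref{eq:dfdgs}.

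For \eqref{eq:mainevlt2}, I would apply Corollary~\ref{cor5.6} within each block to obtain a $1$-dependent decomposition of $(\hat X_\inv^{W_{n,i}}, X_\des^{W_{n,i}})^\top$, standardize the first coordinate by $\sigma(\hat X_\inv^W)$ and the second by $\sigma(X_\des^W)$, and concatenate over $i = 1, \dots, l$ (interleaving across blocks if one prefers, though it is not needed) to represent $(\hat Y_\inv^{(j)}, Y_\des^{(j)})^\top$ as a sum of $n$ many $1$-dependent mean-zero vectors in $\R^2$; stacking the $\kn$ independent copies gives a $1$-dependent array in $\R^{2\kn}$. The covariance matrix is block-diagonal with $\kn$ identical $2\times 2$ blocks $\bigl(\begin{smallmatrix}1 & \rho_n \\ \rho_n & 1\end{smallmatrix}\bigr)$, where $\rho_n = \corr(\hat X_\inv^W, X_\des^W)$. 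Conditions~1 and~2 of Proposition~\ref{thm3.1} hold with $\gamma_1 = 2$, $B_n = O(1)$ by boundedness of $\Inv$ and $\des$ and by non-degeneracy of the marginal variances; Proposition~\ref{thm3.1} then yields $\sup_{x,y}|\P(\alpha_n(\hat M_n - \alpha_n)\le x, \alpha_n(M_{n,\des} - \alpha_n)\le y) - P_n(x,y)| = O(n^{-1/6}\log(\max\{n,\kn\})^{7/6}) = o(1)$ under $k_n\log k_n = o(n)$ (which forces $\log k_n = o(n)$, hence $n^{-1/6}(\log k_n)^{7/6} \to 0$). Finally I need $\rho_n \to 0$: this follows because $\Cov(\hat X_\inv^W, X_\des^W) = \sum_i \Cov(\hat X_\inv^{W_{n,i}}, X_\des^{W_{n,i}}) = l\cdot \Theta(1/n) = O(1/n)$ by Lemma~\ref{lemma2.6}b) / Lemma~\ref{lemma5.6}b), while $\sqrt{\Var(\hat X_\inv^W)\Var(X_\des^W)} = \Theta(n^2)$ (the descent variance is linear in $n$ by the block sum and the formulas in \cite{kahle2020counting}, and cubic $\times$ linear gives $n^4$). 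With $\rho_n \to 0$, the standard convergence of maxima of bivariate Gaussians with correlation bounded away from $1$ gives $P_n(x,y) \to \Lambda(x)\Lambda(y)$, which is \eqref{eq:mainevlt2}; combining with \eqref{eq:dfdgs} via Slutsky yields \eqref{eq:mainevlt}.

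The main obstacle — more a bookkeeping hazard than a genuine difficulty — is making sure the concatenation of the $l$ block-wise $1$-dependent decompositions really is $1$-dependent as a single sequence. Because the blocks use disjoint, independent collections of $\GR(p_i)$ variables, any cross-block pair is independent, so consecutive summands straddling a block boundary are independent and the concatenated sequence is $1$-dependent; one just has to phrase this cleanly. The only quantitative point requiring care is that all the $O$-constants (in \eqref{18}, in the bound on \eqref{19}, in $\Cov(\hat X_\inv^W, X_\des^W)$) depend on $l$, which is legitimate precisely because $l$ is held fixed; if $l$ were allowed to grow with $n$ the argument would break, and this is worth flagging. Everything else is a direct transcription of the proof of Theorem~\ref{thm4.3}.
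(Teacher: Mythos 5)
Your proposal is correct and follows essentially the same route as the paper: the paper likewise declares that the proof of Theorem~\ref{thm4.3} carries over and that the only point needing verification is the bound on \eqref{19}, which it establishes exactly as you do, by noting that the cubic terms in \eqref{18} cancel so the numerator is $O(n^2)$ while the denominator is $\Theta(n^3)$ because each $c_i>0$ and $n_1=\Theta(n)$. The one cosmetic slip is your claim that the block-wise covariances $\Cov(\hat X_\inv^{W_{n,i}}, X_\des^{W_{n,i}})$ are $\Theta(1/n)$ (they are in fact $\Theta(n_i)$, e.g.\ $(n_i-1)/6$ on $S_{n_i}$), but since $\sqrt{\Var(\hat X_\inv^W)\Var(X_\des^W)}=\Theta(n^2)$ the conclusion $\rho_n\to 0$ is unaffected.
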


\begin{proof}
The proof of Theorem~\ref{thm4.3} carries over almost seamlessly, we only need to check the bound of \eqref{19}. We can rephrase \eqref{18} as
\begin{align*}
    \Var(\hat{X}_\inv^{W_n}) &= \sum_{i=1}^l c_in_i^3 + \alpha n^2 + O(n), & \Var(X_\inv^{W_n}) &= \sum_{i=1}^l c_in_i^3 + \beta n^2 + O(n).
\end{align*}
Then, 
\[
    1 - \frac{\Var(\hat{X}_\inv^{W_n})}{\Var(X_\inv^{W_n})} = 1 - \frac{\sum_{i=1}^l c_in_i^3 + \alpha n^2 + O(n)}{\sum_{i=1}^l c_in_i^3 + \beta n^2 + O(n)}.
\]
Depending on whether the residual $\beta n^2 + O(n)$ is positive or negative, we can bound this in both directions (assuming it is positive) via
\begin{align*}
    \eqref{19} &\geq 1 - \frac{\sum_{i=1}^l c_in_i^3 + \alpha n^2 + O(n)}{\sum_{i=1}^l c_in_i^3} = \frac{\alpha n^2 + O(n)}{\sum_{i=1}^l c_in_i^3} = O\left(\frac{1}{n}\right), \\
    \eqref{19} &= \frac{(\beta - \alpha)n^2 + O(n)}{\sum_{i=1}^l c_in_i^3 + \beta n^2 + O(n)} \leq \frac{(\beta - \alpha)n^2 + O(n)}{\sum_{i=1}^l c_in_i^3} = O\left(\frac{1}{n}\right).
\end{align*}
Therefore, we have the same bound for \eqref{19} as in the proof of Theorem~\ref{thm4.3}. 
\end{proof}
 
\section{Conclusion and Outlook} \label{section8}

In this work, we proved both a CLT and an EVLT for the joint statistic $(X_\inv, X_\des)$ on all classical Weyl groups, as well as their bounded products. This addresses one of the open questions raised in \cite{dorr2022extreme} and gives a significant extension to \cite{fang2015rates}, which only covered the CLT on symmetric groups. We benefited from the fact that the number of inversions $X_\inv$ can be suitably approximated by its H\'{a}jek projection, enabling us to apply Gaussian approximation theory for $m$-dependent vectors. In comparison with the univariate results in \cite{dorr2022extreme}, the triangular array could not be stretched as generously, because the involvement of H\'{a}jek's projection required stronger assumptions. 

On the symmetric groups $S_n$, both common inversions and descents are special instances of so-called \textit{generalized inversions} or \textit{$d$-inversions}. For any choice of $d \in \{1, \ldots, n-1\}$, the statistic of $d$-inversions counts only inversions over pairs $(i,j)$ with $1 \leq j-i \leq d$. It is interesting to choose $d = d_n$ as a function of $n$. The asymptotic normality of $d$-inversions under suitable conditions for $d_n$ has been shown in \cite{meier2022central}. Accordingly, it is worthwhile to investigate the constraints on $d_n$ that guarantee the EVLT in the way of Theorem~\ref{thm4.3}.

Some further open questions remain as well, especially about the \textit{two-sided Eulerian statistic} $X_T(\pi) := X_\des(\pi) + X_\des(\pi^{-1})$. This statistic is known to satisfy a CLT according to \cite{bruck2019central} by the method of dependency graphs, but its extreme value behavior still remains unknown. 
Likewise, the extreme asymptotics of the joint distribution $\bigl(X_\des(\pi), X_\des(\pi^{-1})\bigr)$ give another open question.


\section{Appendix: remaining proofs} \label{section9}

\subsection{Proof of Lemma~\ref{lemma2.6}a)} \label{sec9.1}

We compute $\Cov(X_\inv, X_\des)$ from \eqref{1.1} and \eqref{1.2}, obtaining that it has lower order than $\sqrt{\Var(X_\inv)\Var(X_\des)}$. According to \eqref{1.1} and \eqref{1.2}, we have 
\begin{align*}
    \Cov(X_\inv, X_\des) &= \sum_{1\le i<j\le n} \sum_{k=1}^{n-1} \Cov(\textbf{1}\{Z_i > Z_j\}, \textbf{1}\{Z_k > Z_{k+1}\})\\
    &=\sum_{1\le i<j\le n} \sum_{\substack{k\in\{i-1,i,j-1,j\}\\ 1\le k \le n-1}} \Cov(\textbf{1}\{Z_i > Z_j\}, \textbf{1}\{Z_k > Z_{k+1}\})\,,
\end{align*}
where we used that if $k \notin \{i-1, i, j-1, j\}$, then the events $\{Z_i > Z_j\}$ and $\{Z_k > Z_{k+1}\}$ are independent and therefore $\Cov(\textbf{1}\{Z_i > Z_j\}, \textbf{1}\{Z_k > Z_{k+1}\}) = 0$. In what follows, we analyze the case $k \in \{i-1, i, j-1, j\},$ first assuming that all these numbers are distinct. Additionally, we temporarily ignore the boundary cases  $i=1$ (where $k=i-1$ is outside the range) or $j=n$ (where $k=n$ is within the range but the variable $Z_{k+1}$ compared with $Z_k$ is not). This gives four possible constellations: 
\begin{itemize}
    \item type A: $k+1=i$ and $j>k+2$,
    \item type B: $k=i$ and $j > k+1$,
    \item type C: $k+1=j$ and $i < k$,
    \item type D: $k=j$ and $i < k-1$.
\end{itemize}
For type A, we have 
\begin{align*}
    \Cov(\textbf{1}\{Z_i > Z_j\}, \textbf{1}\{Z_{i-1} > Z_i\}) &= \PP(Z_i > Z_j, Z_{i-1} > Z_i) - \PP(Z_{i} > Z_j)\PP(Z_{i-1} >  Z_{i})\\
    &= \P(Z_{i-1}>Z_i>Z_j)-\tfrac{1}{4}\\
    &= \tfrac{1}{6} - \tfrac{1}{4} = -\tfrac{1}{12},
\end{align*}
since each of the six possible orderings of $Z_{i-1},Z_i,Z_j$ is equally likely as they are independent $U(0,1)$ variables.
For type B, we get
\begin{align*}
\Cov(\textbf{1}\{Z_i > Z_j\}, \textbf{1}\{Z_i > Z_{i+1}\}) &= \P(Z_i=\max\{Z_i, Z_{i+1} ,Z_j\}) - \tfrac{1}{4}\\
&= \tfrac{1}{3} - \tfrac{1}{4} = \tfrac{1}{12}
\end{align*}
since each of $Z_i, Z_{i+1} ,Z_j$ are equally likely to be the maximum.
Types C and D are handled the same way. For type C, we have
$\Cov(\textbf{1}\{Z_i > Z_j\}, \textbf{1}\{Z_{j-1} > Z_j\}) = \tfrac{1}{12},$
and for type D,
$\Cov(\textbf{1}\{Z_i > Z_j\}, \textbf{1}\{Z_{j} > Z_{j+1}\}) = -\tfrac{1}{12}$. So if $1<i<i+1<j<n$, the inner sum 
\[\sum_{k\in\{i-1,i,j-1,j\}} \Cov(\textbf{1}\{Z_i > Z_j\}, \textbf{1}\{Z_k > Z_{k+1}\})\]
consists of two canceling pairs of $1/12$ and $-1/12,$ and vanishes altogether. Figure~\ref{fig11} displays the passage of $k$ over the indices $1, \ldots, n$ and the positions of the positive and negative covariances, in which we see that the positive signs are located inside, while the negative ones are located outside. With the help of this figure, we can also see what happens when $1<i<i+1<j<n$ does not hold: 

\begin{figure}[t]
    \centering
    \includegraphics[scale=1.2]{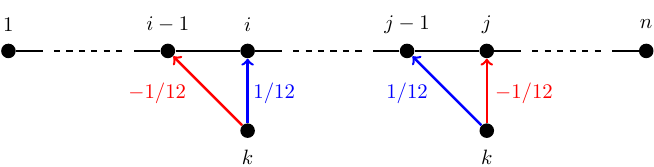}
    \caption{Canceling pairs of positive and negative covariances as $k$ passes through $1, \ldots, n-1$ in non-exceptional situations. The covariance is zero for all other values of $k$.}
    \label{fig11}
\end{figure}

\begin{itemize}
    \item If $i$ and $j$ are subsequent, i.e., $j=i+1,$ then the two positive contributions in Figure~\ref{fig11} collide. If additionally $k=i$, we obtain $\Cov(\textbf{1}\{Z_i > Z_j\}, \textbf{1}\{Z_k > Z_{k+1}\}) = \Cov(\textbf{1}\{Z_i > Z_{i+1}\}, \textbf{1}\{Z_i > Z_{i+1}\}) = \Var(\textbf{1}\{Z_i > Z_{i+1}\}) = 1/4$.
    \item If $i=1,$ then the leftmost negative contribution in Figure~\ref{fig11} disappears.
    \item If $j=n,$ then the rightmost negative contribution in Figure~\ref{fig11} disappears.
\end{itemize}
As these situations are not mutually disjoint, we obtain the following list of exceptional cases and their contributions $C_{ij} := \sum\nolimits_{k=1}^{n-1} \Cov(\textbf{1}\{Z_i > Z_j\}, \textbf{1}\{Z_k > Z_{k+1}\})$:
\begin{itemize}
    \item (E1): $j=i+1,$ but neither $i=1$ nor $j=n \Longrightarrow C_{ij} = 1/4 - 1/6 = 1/12$, 
    \item (E2): $i=1$ and $j=3, \ldots, n-1 \Longrightarrow C_{ij} = 1/12$,
    \item (E3): $j=n$ and $i=2, \ldots, n-2 \Longrightarrow C_{ij} = 1/12$,
    \item (E4): $i=n-1$, $j=n \Longrightarrow C_{ij} = 1/4 - 1/12 = 1/6$, 
    \item (E5): $i=n-1$, $j=n \Longrightarrow C_{ij} = 1/6$,
    \item (E6): $i=1$, $j=n \Longrightarrow C_{ij} = 1/6$.
\end{itemize}
As an example, we display situation (E1) in Figure~\ref{fig9}.
\begin{figure}[h]
    \centering
    \includegraphics[scale=1.25]{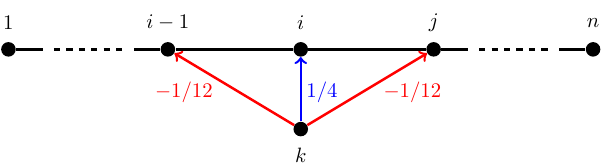}
    \caption{Display of covariances for the exceptional case (E1) when $i$ and $j=i+1$ are subsequent.}
    \label{fig9}
\end{figure} \par \noindent
Taking the contributions and frequencies of (E1), $\ldots,$ (E6) into account, we obtain the exact result
\[\Cov(X_\inv, X_\des) = \underbrace{(n-3)\left(\frac{1}{4}-\frac{1}{6}\right)}_{(\mathrm{E}1)} + \underbrace{2(n-3)\frac{1}{12}}_{(\mathrm{E}2,~\mathrm{E}3)} + \underbrace{2\left(\frac{1}{4} - \frac{1}{12}\right)}_{(\mathrm{E}4,~\mathrm{E}5)} + \underbrace{\frac{1}{6}}_{(\mathrm{E}6)} = \frac{n-1}{4}.\]
The claim follows. \qed

\subsection{Proof of Lemma~\ref{lemma2.6}b)} \label{sec9.2}

Recall that $Z_1,\ldots, Z_n$ are i.i.d.\ $U(0,1)$. By Lemma~\ref{lemma2.4}, we have 
\[\hat{X}_\inv = \sum_{j=1}^n (n-2j+1)Z_j + \frac{1}{2} \binom{n}{2},\]
which, together with the definition of $X_\des$, yields
\begin{align*}
    \Cov(\hat{X}_\inv, X_\des) &= \Cov\left(\sum_{j=1}^n (n-2j+1)Z_j, \sum_{k=1}^{n-1} \textbf{1}\{Z_k > Z_{k+1}\}\right) \\
    &= \Big( \sum_{j=2}^{n-1}+ \sum_{j\in \{1,n\}}\Big) \sum_{k=1}^{n-1} (n-2j+1)\Cov(Z_j, \textbf{1}\{Z_k > Z_{k+1}\})\\
    & =: T_1+T_2\,.
\end{align*}
In view of the independence of $Z_1, \ldots, Z_n$, we get
$$T_1= \sum_{j=2}^{n-1} (n-2j+1) \big(\Cov(Z_j, \textbf{1}\{Z_j < Z_{j-1}\}) + \Cov(Z_j, \textbf{1}\{Z_j > Z_{j+1}\})\big)=0\,,$$
where for the last equality, we used
\begin{align*}
    &\Cov(Z_j, \textbf{1}\{Z_j < Z_{j-1}\}) + \Cov(Z_j, \textbf{1}\{Z_j > Z_{j+1}\})\\
    & = \E(Z_j\textbf{1}\{Z_j < Z_{j-1}\}) + \E(Z_j\textbf{1}\{Z_j > Z_{j+1}\}) - \frac{1}{2}\\
     & = \E(Z_j\textbf{1}\{Z_j < Z_{j-1}\}) + \E(Z_j\textbf{1}\{Z_j > Z_{j-1}\}) - \frac{1}{2}\\
     &=\E(Z_j) -\frac{1}{2} =0\,.
\end{align*}
As $Z_1\textbf{1}\{Z_1 < Z_{2}\}$ is a function in two uniform variables with joint density $f \negthickspace: \R^2 \rightarrow \R, (x,y) \mapsto \textbf{1}\{(x,y) \in [0,1]^2\}$, we can apply Fubini's Theorem to obtain
\begin{align*}
    \E(Z_1\textbf{1}\{Z_1 < Z_{2}\}) &= \int_{[0,1]^2} x\textbf{1}\{x < y\}\,\text{d}(x,y) \\
    &= \int_0^1 x\left(\int_0^1 \textbf{1}\{x<y\}\dy\right)\dx \\
    &= \int_0^1 x(1-x)\,\dx = \frac{1}{6}.
\end{align*}
Therefore, we get for $T_2$: 
\begin{align*}
    T_2&= (n-1)\Cov(Z_1, \textbf{1}\{Z_1 > Z_{2}\})
   - (n-1)\Cov(Z_n, \textbf{1}\{Z_{n-1} > Z_{n}\})\\
   &= (n-1) \big( \E(Z_1 \textbf{1}\{Z_1 > Z_{2}\})- \E(Z_1 \textbf{1}\{Z_1 < Z_{2}\}) \big) \\
   &= (n-1) \big( 1/2 - 2 \E(Z_1 \textbf{1}\{Z_1 < Z_{2}\}) \big)
   =\frac{n-1}{6}\,, 
\end{align*}
which shows that $\Cov(\hat{X}_\inv, X_\des) = \displaystyle{\frac{n-1}{6}}$. \qed

\subsection{Proof of Lemma~\ref{lemma5.4}} \label{sec9.4}

We follow the instructive calculation of $\Var(X_\inv)$ in the uniform case provided in \cite[Section 3]{kahle2020counting}. So, the main task is to calculate $\E(X_\inv^2)$. For $X_\inv^B,$ we recall \eqref{6} and use the abbreviations
\[X_\inv^B = \underbrace{\sum_{i<j} \textbf{1}\{Z_i > Z_j\}}_{=:~X^+} + \underbrace{\sum_{i<j} \textbf{1}\{-Z_i > Z_j\}}_{=:~X^-} + \underbrace{\sum_{i=1}^n \textbf{1}\{Z_i < 0\}}_{=:~X^\circ} = X^+ + X^- + X^\circ.\]
This means we need to compute the terms
\begin{align*} 
\E\bigl((X_\inv^B)^2\bigr) &= \E((X^+)^2) + \E((X^-)^2) + 2\E(X^+X^-) \\
& \qquad + \E((X^\circ)^2) + 2\E(X^+X^\circ) + 2\E(X^-X^\circ).
\end{align*}
The first term $\E((X^+)^2)$ is invariant under $p,$ since it only involves events $\{Z_i > Z_j\}$ for which $\PP(Z_i > Z_j) = 1/2,$ even if the involved $Z_i, Z_j$ are not uniformly distributed. Therefore, we can obtain $\E((X^+)^2)$ from \cite[Section 3]{kahle2020counting}: 
\[\E((X^+)^2) = \frac{1}{2}\binom{n}{2} + \frac{1}{4}\binom{n}{2}\binom{n-2}{2} + \frac{5}{3}\binom{n}{3}.\]
Next, we turn to
\[\E((X^-)^2) = \sum_{i<j}\sum_{k<l} \PP(-Z_i > Z_j,-Z_k > Z_l).\]
For the $\binom{n}{2}\binom{n-2}{2}$ choices of pairwise distinct $i,j,k,l$, we have that $\PP(-Z_i > Z_j, -Z_k > Z_l) = p^2$ by independence, and for the $\binom{n}{2}$ cases where $(i,j)=(k,l)$, we simply get $\PP(-Z_i > Z_j) = p$. The set of triples with two of the indices colliding need to be analyzed similarly as in the proof of Lemma~\ref{lemma2.6}a). Note that the cases $i=k$ and $j=l$ are counted twice. E.g., in the case of $i=l,$ we calculate by case distinction according to the signs:
\begin{align*}
    \PP(-Z_i > Z_j, -Z_k > Z_i) &= \PP(-Z_i > Z_j, -Z_k > Z_i, Z_i > 0) \\
    & \qquad + \PP(-Z_i > Z_j, -Z_k > Z_i, Z_i < 0) \\
    &= \PP(-Z_i > Z_j, -Z_k > Z_i, Z_i > 0, Z_j < 0, Z_k < 0) \\
    & \qquad + \PP(Z_i < 0, Z_j < 0, Z_k < 0) \\
    & \qquad + \PP(Z_i < 0, Z_j > 0, Z_k < 0, -Z_i > Z_j) \\
    & \qquad + \PP(Z_i < 0, Z_j < 0, Z_k > 0, -Z_k > Z_i) \\
    & \qquad + \PP(Z_i < 0, Z_j > 0, Z_k > 0, -Z_i > Z_j, -Z_k > Z_i) \\
    &= \frac{1}{3}p^2q + p^3 + \frac{1}{2}p^2q + \frac{1}{2}p^2q + \frac{1}{3}pq^2 \\
    &= \frac{1}{3}p(2p+1).
\end{align*}
It turns out that all six constellations of triplets give this contribution. So, overall,
\[\E\bigl((X^-)^2\bigr) = \binom{n}{2}p + \binom{n}{2}\binom{n-2}{2}p^2 + \binom{n}{3}2p(2p+1).\]
For $\E(X^+X^-),$ the disjoint quadruplets give a contribution of $p/2$ each. For the colliding pairs, we need to compute
\begin{align*}
    &\PP(Z_i > Z_j, -Z_i > Z_j) \\
    =~& \underbrace{\PP(Z_i > Z_j, -Z_i > Z_j, Z_i > 0, Z_j > 0)}_{=~0} + \underbrace{\PP(Z_i > Z_j, -Z_i > Z_j, Z_i < 0, Z_j > 0)}_{=~0} \\
    & \quad + \PP(Z_i > Z_j, -Z_i > Z_j, Z_i < 0, Z_j < 0) + \PP(Z_i > Z_j, -Z_i > Z_j, Z_i > 0, Z_j < 0) \\
    =~& \PP(Z_i < 0, Z_j < 0, Z_i > Z_j) + \PP(Z_i > 0, Z_j < 0, -Z_i > Z_j) = \frac{p^2}{2} + p^2q.
\end{align*}
For the triplets, we repeat the procedure above. For the cases $j=k$ and $j=l$, we get
\[\PP(Z_i > Z_j, -Z_j > Z_k) = -\frac{1}{6}p(2p-5).\]
For the cases $i=l$ and $i=k$, we derive 
\begin{align*}
    \PP(Z_i > Z_j, -Z_i > Z_l) &= \PP(Z_i > Z_j, -Z_i > Z_l, Z_i > 0, Z_l < 0) \\
    & \qquad + \PP(Z_i > Z_j, -Z_i > Z_l, Z_i < 0, Z_j < 0) \\
    &= \PP(Z_i > 0, Z_j < 0, Z_l < 0, -Z_i > Z_l) \\
    & \qquad + \PP(Z_i > 0, Z_j > 0, Z_l < 0, Z_j < Z_i < -Z_l) \\
    & \qquad + \PP(Z_i < 0, Z_j < 0, Z_l < 0, Z_i > Z_j) \\
    & \qquad + \PP(Z_i < 0, Z_j < 0, Z_l > 0, Z_j < Z_i < -Z_l) \\
    &= \frac{1}{2}p^2q + \frac{1}{6}pq^2 + \frac{1}{2}p^3 + \frac{1}{6}p^2q \\
    &= \frac{1}{6}p(2p+1),
\end{align*}
and overall we obtain
\begin{align*}
    \E(X^+X^-) &= \binom{n}{2}\left(\frac{p^2}{2} + p^2q\right) + \binom{n}{2}\binom{n-2}{2}\frac{p}{2} + 3\binom{n}{3}\underbrace{\biggl(\frac{1}{6}p(2p+1) - \frac{1}{6}p(2p-5)\biggr)}_{=~ p} \\
    &= \binom{n}{2}\left(\frac{p^2}{2} + p^2q\right) + \binom{n}{2}\binom{n-2}{2}\frac{p}{2} + 3\binom{n}{3}p.
\end{align*}
The remaining three terms are easily calculated as
\begin{align*}
    \E(X^+X^\circ) &= \sum_{i<j} \sum_{k=1}^n \PP(Z_i > Z_j, Z_k < 0) \\
    &= 3\binom{n}{3}\frac{p}{2} + \sum_{i<j} \PP(Z_i > Z_j, Z_i < 0) + \sum_{i<j} \PP(Z_i > Z_j, Z_j < 0) \\
    &= 3\binom{n}{3}\frac{p}{2} + \binom{n}{2}(p^2 + pq) = 3\binom{n}{3}\frac{p}{2} + \binom{n}{2}p, \\
    \E(X^-X^\circ) &= \sum_{i<j} \sum_{k=1}^n \PP(-Z_i > Z_j, Z_k < 0) \\
    &= 3\binom{n}{3}p^2 + 2\binom{n}{2}(p^2 + p^2q) = 3\binom{n}{3}p^2 + 2\binom{n}{2}(2p^2 - p^3), \\
    \E((X^\circ)^2) &= \sum_{i,j=1}^n \PP(Z_i < 0, Z_j < 0) = 2\binom{n}{2}p^2 + np.
\end{align*}
Summing all of these terms and subtracting the square of the mean gives the claim for $B_n$. On $D_n,$ we ignore the parts involving $X^\circ$ and get the desired result. \qed

\subsection{Proof of Lemma~\ref{lemma5.5}} \label{sec9.5}

We first prove the claim on the even-signed permutation groups $D_n$, since the calculation will be the same for $B_n$. We proceed as in Lemma~\ref{lemma2.4}, starting from
\[\hat{X}_\inv = \sum_{k=1}^n \E(X_\inv \mid Z_k) - (n-1)\E(X_\inv),\]
except here, $X_\inv$ is defined by \eqref{7}, and we have $Z_k \sim \GR(p)$. According to \eqref{7}, we have
\begin{align}
    \E(X_\inv \mid Z_k) &= \sum_{i<j} \E\bigl(\textbf{1}\{Z_i > Z_j\} + \textbf{1}\{-Z_i > Z_j\} \mid Z_k\bigr) \nonumber \\
    &= \sum_{i<j} \big( \PP(Z_i > Z_j \mid Z_k) + \PP(-Z_i > Z_j \mid Z_k)\big). \label{eq8}
\end{align}
Write $f(Z_i, Z_j) := \mathbf{1}\{Z_i > Z_j\} + \mathbf{1}\{-Z_i > Z_j\}$ for $i < j$ and set $U_k = |Z_k|$. A straightforward case distinction gives

\begin{itemize*}
    \item $Z_i, Z_j > 0$: $f(Z_i, Z_j) = \textbf{1}\{U_i > U_j\},$
    \item $Z_i > 0, Z_j < 0$: $f(Z_i, Z_j) = \textbf{1}\{U_i > U_j\},$
    \item $Z_i < 0, Z_j > 0$: $f(Z_i, Z_j) = \textbf{1}\{U_i < U_j\} + 1,$ 
    \item $Z_i, Z_j < 0$: $f(Z_i, Z_j) = \textbf{1}\{U_i < U_j\} + 1$. 
\end{itemize*}
For symmetry reasons, $f(Z_i, Z_j)$ depends only on the sign of $Z_i$ but not on the sign of $Z_j$. To compute \eqref{eq8}, we only need to consider the $n-k$ tuples $(k,j)$ and the $k-1$ tuples $(i,k)$, as the other tuples are independent of $Z_k$ and produce constants which do not contribute to the variance. 

Recall that for $k < j$, we have $\PP(U_k > U_j \mid U_k) = U_k$ and $\PP(U_k < U_j \mid U_k) = 1 - U_k$. Therefore, we can write
\[\E(X_\inv \mid Z_k) = \sum_{i=1}^{k-1} \E\bigl(f(Z_i, Z_k) \mid Z_k\bigr) + \sum_{j=k+1}^n \E\bigl(f(Z_k, Z_j) \mid Z_k\bigr),\]
where 
\begin{align*}
    \E\bigl(f(Z_i, Z_k) \mid Z_k\bigr) &= \PP(Z_i > 0)(1 - U_k) + \PP(Z_i < 0)(1 + U_k) \\
    &= q(1-U_k) + p(1 + U_k) = 2pU_k - U_k+1
\end{align*}
and 
\begin{align*}
    \E\bigl(f(Z_k, Z_j) \mid Z_k\bigr) &= \textbf{1}\{Z_k > 0\}U_k + \textbf{1}\{Z_k < 0\}(1 + 1 - U_k) \\
    &= \textbf{1}\{Z_k > 0\}U_k + \textbf{1}\{Z_k < 0\}(2 - U_k).
\end{align*}
Overall, we obtain
\begin{align*}
    \E(X_\inv \mid Z_k) &= (k-1)(2p U_k - U_k + 1) \\
    &\quad + (n-k)\bigl(\textbf{1}\{Z_k > 0\}U_k + \textbf{1}\{Z_k < 0\}(2 - U_k)\bigr) + \text{const}.
\end{align*}
To use the standard formula $\Var(X) = \E(X^2) - \E(X)^2,$ where $X$ is not affected by constant summands, we now compute
\begin{subequations}
\begin{align}
    &\E(\E(X_\inv \mid Z_k)^2) \nonumber \\
    =~& (k-1)^2\E\left((2pU_k - U_k + 1)^2\right) \label{24a} \\
    +~& (n-k)^2\E\left(\bigl(\textbf{1}\{Z_k > 0\}U_k + \textbf{1}\{Z_k < 0\}(2 - U_k)\bigr)^2\right) \label{24b} \\
    +~& 2(k-1)(n-k)\E\left((2pU_k - U_k + 1)\bigl(\textbf{1}\{Z_k > 0\}U_k\right. \label{24c} \\
    & \hspace{5.9cm} + \left.\textbf{1}\{Z_k < 0\}(2 - U_k)\bigr)\right). \nonumber
\end{align}
\end{subequations}
The random variables $\textbf{1}\{Z_k > 0\}$ and $U_k$ are independent by construction and therefore,
\begin{align*}
    \eqref{24a} &= (k-1)^2\E\left(4p^2U_k^2 - 4pU_k^2 + 4pU_k + U_k^2 - 2U_k + 1\right) \\
    &= (k-1)^2\left(\frac{4}{3}p^2 - \frac{4}{3}p + 2p + \frac{1}{3}\right) = (k-1)^2\left(\frac{4}{3}p^2 + \frac{2}{3}p + \frac{1}{3}\right), \\
    \eqref{24b} &= (n-k)^2\left(q\E(U_k)^2 + p\E\bigl((2 - U_k)^2\bigr)\right) \\
    &= (n-k)^2\left(\frac{q}{3} + \frac{7}{3}p\right) = (n-k)^2\left(2p + \frac{1}{3}\right), \\
    \eqref{24c} &= 2(k-1)(n-k)\Bigl((1-p)\E(2pU_k^2 - U_k^2 + U_k) \\
    & \hspace{3cm} + p\E\bigl((2pU_k - U_k + 1)(2 - U_k)\bigr)\Bigr) \\
    &= 2(k-1)(n-k)\left((1-p)\left(\frac{2}{3}p + \frac{1}{6}\right) + p\left(\frac{4}{3}p + \frac{5}{6}\right)\right) \\
    &= 2(k-1)(n-k)\left(\frac{2}{3}p^2 + \frac{4p}{3} + \frac{1}{6}\right).
\end{align*}
In total, we have
\begin{align*}
    \E(\E(X_\inv \mid Z_k)^2) &= (k-1)^2\left(\frac{4}{3}p^2 + \frac{2}{3}p + \frac{1}{3}\right) + (n-k)^2\left(2p + \frac{1}{3}\right) \\
    &+ 2(k-1)(n-k)\left(\frac{2}{3}p^2 + \frac{4}{3}p + \frac{1}{6}\right).
\end{align*}
We subtract the square of
\begin{align*}
    \E(\E(X_\inv \mid Z_k)) &= (k-1)\left(p + \frac{1}{2}\right) + (n-k)\left(\frac{1}{2} - \frac{p}{2} + \frac{3}{2}p\right) \\
    &= \left(p + \frac{1}{2}\right)(n-1). 
\end{align*}
The variance of $\hat{X}_\inv$ on $D_n$ is
\begin{align*}
    \Var(\hat{X}_\inv) &= \sum_{k=1}^n \Big(\E(\E(X_\inv \mid Z_k)^2) - \E(\E(X_\inv \mid Z_k))^2\Big) \\
    &= \sum_{k=1}^n \E(\E(X_\inv \mid Z_k)^2) - n(n-1)^2\left(p + \frac{1}{2}\right)^2, 
\end{align*}
so, to conclude the proof, we compute
\begin{align*}
    \sum_{k=1}^n \E(\E(X_\inv \mid Z_k)^2) &= \left(\frac{4}{3}p^2 + \frac{2}{3}p + \frac{1}{3}\right)\sum_{k=1}^n (k-1)^2 + \left(2p+\frac{1}{3}\right)\sum_{k=1}^n (n-k)^2 \\
    &\qquad + \left(\frac{4}{3}p^2 + \frac{8p}{3} + \frac{1}{3}\right)\sum_{k=1}^n (k-1)(n-k) \\
    &= \left(\frac{4}{3}p^2 + \frac{2}{3}p + \frac{1}{3}\right) \cdot \frac{1}{6}n(n-1)(2n-1) \\
    &\qquad + \left(2p + \frac{1}{3}\right) \cdot \frac{1}{6}n(n-1)(2n-1) \\
    &\qquad + \left(\frac{4}{3}p^2 + \frac{8p}{3} + \frac{1}{3}\right) \cdot \frac{1}{6}n(n-1)(n-2) \\
    &= n^3\left(\frac{2}{3}p^2 + \frac{4}{3}p + \frac{5}{18}\right) - n^2\left(\frac{4}{3}p^2 - \frac{8}{3}p - \frac{1}{2}\right) \\
    & \qquad + n\left(\frac{2}{3}p^2 + \frac{4}{3}p + \frac{2}{9}\right).
\end{align*}
Subtracting $n(n-1)^2\left(p + \frac{1}{2}\right)^2$  gives exactly the desired leading term computed in Lemma~\ref{lemma5.4}. On the groups $B_n,$ we achieve the same result since the extra parts in $\Var(\hat{X}_\inv)$ yielded by $\sum_{i=1}^n \textbf{1}\{Z_i < 0\}$ are not significant. Recall that
\[X_\inv^B = X_\inv^D + \sum_{i=1}^n \textbf{1}\{Z_i < 0\},\]
and therefore, 
\begin{align*}
    \Var\left(\E(X_\inv^B \mid Z_k)\right) &= \Var\left(\E(X_\inv^D \mid Z_k) + \sum_{j=1}^n \E(\textbf{1}\{Z_j < 0\} \mid Z_k)\right) \\
    &= \Var\left(\E(X_\inv^D \mid Z_k) + \E(\textbf{1}\{Z_k < 0\} \mid Z_k) + \text{const}\right) \\
    &= \Var\left(\E(X_\inv^D \mid Z_k) + \textbf{1}\{Z_k < 0\} + \text{const}\right) \\
    &= \Var \hspace{0.7mm} \Bigl((n-k)\textbf{1}\{Z_k > 0\}U_k + (n-k)\textbf{1}\{Z_k < 0\}(2 - U_k) \\
    & \hspace{1.6cm} + (k-1)(2pU_k - U_k + 1) + \textbf{1}\{Z_k < 0\} + \text{const}\Bigr).
\end{align*}
Using the standard formula again, we have 
\begin{align*}
    \E\left(\E(X_\inv^B \mid Z_k)^2\right) &= \E\left(\E(X_\inv^D \mid Z_k)^2\right) + \E\left(\textbf{1}\{Z_k < 0\}\right) \\
    &\qquad + 2\E\left((n-k)(2-U_k)\textbf{1}\{Z_k < 0\}\right) \\
    &\qquad + 2\E\bigl((k-1)(2pU_k - U_k + 1)\textbf{1}\{Z_k < 0\}\bigr) \\
    &= \E\left(\E(X_\inv^D \mid Z_k)^2\right) + p + 3p(n-k) + (k-1)(2p+1)p, \\
    \E\left(\E(X_\inv^B \mid Z_k)\right)^2 &= \E\left(\E(X_\inv^D \mid Z_k)\right)^2 + p^2 + 2\E(\textbf{1}\{Z_k < 0\})\E\left(\E(X_\inv^D \mid Z_k)\right) \\
    &= \left(p + \frac{1}{2}\right)^2(n-1)^2 + p^2 + 2p(n-1)\left(p + \frac{1}{2}\right).
\end{align*}
We conclude that 
\begin{align*}
    \Var(\hat{X}_\inv^B) &= \Var(\hat{X}_\inv^D) + n(p - p^2) - p(2p+1)\frac{n(n-1)}{2} \\
    &= \Var(\hat{X}_\inv^D) + O(n^2).
\end{align*}
The desired claim follows from Theorem~\ref{thm2.2}. \qed

\subsection{Proof of Lemma~\ref{lemma5.6}a)} \label{sec9.6}

For the groups $B_n$ and $D_n,$ the calculation follows the same approach as on $S_n$. Recall that now, $Z_1, \ldots, Z_n \sim \GR(p).$ On $D_n,$ we have by \eqref{14b}, \eqref{7} that 
\begin{subequations}
\begin{align}
    \Cov(X_\inv^D, X_\des^D) &= \sum_{i<j} \sum_{k=1}^{n-1} \Cov\left(\textbf{1}\{Z_i > Z_j\}, \textbf{1}\{Z_k > Z_{k+1}\}\right) \label{21a} \\
    &+ \sum_{i<j} \sum_{k=1}^{n-1} \Cov\left(\textbf{1}\{-Z_i > Z_j\}, \textbf{1}\{Z_k > Z_{k+1}\}\right)  \label{21b} \\
    &+ \sum_{i<j} \Cov(\textbf{1}\{Z_i > Z_j\} + \textbf{1}\{-Z_i > Z_j\}, \textbf{1}\{-Z_2 > Z_1\}). \label{21c}
\end{align}
\end{subequations}
The contribution of \eqref{21a} is $(n-1)/4,$ in analogy with Lemma~\ref{lemma2.6}a). In \eqref{21b}, we first demonstrate the cancellation in the non-exceptional case when $\{i - 1, i, j - 1, j\}$ form a set of distinct numbers. In that case, we have
\begin{align*}
    & \Cov(\textbf{1}\{-Z_i > Z_j\}, \textbf{1}\{Z_i > Z_{i+1}\}) + \Cov(\textbf{1}\{-Z_i > Z_j\}, \textbf{1}\{Z_{i-1} > Z_i\}) \\
    =~& \E(\textbf{1}\{-Z_i > Z_j\}\textbf{1}\{Z_i > Z_{i+1}\}) + \E(\textbf{1}\{-Z_i > Z_j\}\textbf{1}\{Z_{i-1} > Z_i\}) - 2(p/2) \\
    =~& \E(\textbf{1}\{-Z_i > Z_j\}\textbf{1}\{Z_i > Z_{i+1}\}) + \E(\textbf{1}\{-Z_i > Z_j\}\textbf{1}\{Z_{i+1} > Z_i\}) - p \\
    =~& \E(\textbf{1}\{-Z_i > Z_j\}) - p = 0,
\end{align*}
and likewise, 
\[\Cov(\textbf{1}\{-Z_i > Z_j\}, \textbf{1}\{Z_j > Z_{j+1}\}) + \Cov(\textbf{1}\{-Z_i > Z_j\}, \textbf{1}\{Z_{j-1} > Z_j\}) = 0.\]
However, this cancellation occurs not only in the non-exceptional cases, but also in the aggregation of the exceptional cases (E1) -- (E6) from the proof of Lemma~\ref{lemma2.6}a), except for the covariances resulting from the clash of $j=i+1$ and $k=i$. To be precise, (E2) and (E3) cancel mutually. (E4) and (E5) give two clashes and a canceling pair. (E6) consists of another canceling pair. All of this can be checked from the computation of $\E(X^+X^-)$ in the proof of Lemma~\ref{lemma5.4}. From there, we also obtain $\forall i=1, \ldots, n-1$:
\[\Cov(\textbf{1}\{-Z_i > Z_{i+1}\}, \textbf{1}\{Z_i > Z_{i+1}\}) = \frac{p^2}{2} + p^2q - \frac{p}{2},\]
which interestingly vanishes in the unbiased case. Overall,
\[\eqref{21b} = (n-1)\left(\frac{p^2}{2} + p^2q - \frac{p}{2}\right).\]
Finally, consider \eqref{21c}. Obviously, this double-indexed sum involves exactly the pairs $(i,j)$ with $i=1$ or $i=2$. We get 
\begin{align*}
    \eqref{21c} &= \sum_{j=3}^n \Cov(\textbf{1}\{Z_1 > Z_j\} + \textbf{1}\{-Z_1 > Z_j\}, \textbf{1}\{-Z_2 > Z_1\}) \\
    &+ \sum_{j=3}^n \Big[\Cov(\textbf{1}\{Z_2 > Z_j\} + \textbf{1}\{-Z_2 > Z_j\}, \textbf{1}\{-Z_2 > Z_1\}) \\
    &+ \underbrace{\Cov(\textbf{1}\{-Z_1 > Z_2\}, \textbf{1}\{-Z_2 > Z_1\})}_{= \hspace{0.7mm} p^2} + \underbrace{\Cov(\textbf{1}\{Z_1 > Z_2\}, \textbf{1}\{-Z_2 > Z_1\})}_{= \hspace{0.7mm} 0}\Big] \\
    &= \underbrace{\sum_{j=3}^n \Big[\Cov(\textbf{1}\{Z_1 > Z_j\}, \textbf{1}\{-Z_2 > Z_1\}) + \Cov(\textbf{1}\{-Z_1 > Z_j\}, \textbf{1}\{-Z_2 > Z_1\})\Big]}_{= \hspace{0.7mm} 0} \\
    &+ \underbrace{\sum_{j=3}^n \Big[\Cov(\textbf{1}\{Z_2 > Z_j\}, \textbf{1}\{-Z_2 > Z_1\}) + \Cov(\textbf{1}\{-Z_2 > Z_j\}, \textbf{1}\{-Z_2 > Z_1\})\Big]}_{= \hspace{0.7mm} 0} + \hspace{0.7mm} p^2.
\end{align*}
Therefore, we obtain the overall result on $D_n$, which is
\[\Cov(X_\inv^D, X_\des^D) = (n-1)\left(\frac{p^2}{2} + p^2q - \frac{p}{2} + \frac{1}{4}\right) + p^2.\]
Next, we show that this result is obtained on $B_n$ as well. By \eqref{6} and \eqref{14a}, we have
\begin{subequations}
\begin{align}
    \Cov(X_\inv^B, X_\des^B) &= \eqref{21a} + \eqref{21b} \nonumber \\
    &+ \sum_{i=1}^n \sum_{k=1}^{n-1} \Cov(\textbf{1}\{Z_i < 0\}, \textbf{1}\{Z_k > Z_{k+1}\}) \label{22a} \\
    &+ \sum_{i<j} \Cov(\textbf{1}\{Z_i > Z_j\} + \textbf{1}\{-Z_i > Z_j\}, \textbf{1}\{Z_1 < 0\}) \label{22b} \\
    &+ \sum_{i=1}^n \Cov(\textbf{1}\{Z_i < 0\}, \textbf{1}\{Z_1 < 0\})\,. \label{22c}
\end{align}
\end{subequations}
We now see that \eqref{22a} and \eqref{22b} vanish. In \eqref{22a}, the inner sum only involves $k=i-1$ and $k=i,$ therefore, 
\begin{align*}
    \eqref{22a} &= \sum_{i=1}^n \Big[ \Cov(\textbf{1}\{Z_i < 0\}, \textbf{1}\{Z_{i-1} > Z_i\}) + \Cov(\textbf{1}\{Z_i < 0\}, \textbf{1}\{Z_i > Z_{i+1}\})\Big] \\
    &= \sum_{i=1}^n \Big[\Cov(\textbf{1}\{Z_i < 0\}, \textbf{1}\{Z_{i-1} > Z_i\}) + \Cov(\textbf{1}\{Z_i < 0\}, \textbf{1}\{Z_i > Z_{i-1}\}) \Big] \\
    &= \E(\textbf{1}\{Z_1 < 0\}) - 2p/2 = 0.
\end{align*}
This cancellation even applies to the boundary terms $i=1$ and $i=n$. We also get 
\begin{align*}
    \eqref{22b} &= \sum_{j=2}^n \Big[ \Cov(\textbf{1}\{Z_1 > Z_j\}, \textbf{1}\{Z_1 < 0\}) + \Cov(\textbf{1}\{-Z_1 > Z_j\}, \textbf{1}\{Z_1 < 0\})\Big] \\
    &= \underbrace{\sum_{j=2}^n \Big[\PP(Z_1 < 0, Z_1 > Z_j) - p/2\Big]}_{= \hspace{0.7mm} -pq/2} + \underbrace{\sum_{j=2}^n \Big[\PP(Z_1 < 0, -Z_1 > Z_j) - p^2\Big]}_{= \hspace{0.7mm} pq/2} = 0.
\end{align*}
Finally,
\[\eqref{22c} = \sum_{i=1}^n \Cov(\textbf{1}\{Z_i < 0\}, \textbf{1}\{Z_1 < 0\}) = \Var(\textbf{1}\{Z_1 < 0\}) = p - p^2,\]
giving the overall result
\[\Cov(X_\inv^B, X_\des^B) = (n-1)\left(\frac{p^2}{2} + p^2q - \frac{p}{2} + \frac{1}{4}\right) + (p - p^2), \]
so again, $\rho(X_\inv^B, X_\des^B)$ vanishes in the limit. \qed

\subsection{Proof of Lemma~\ref{lemma5.6}b)} \label{sec9.7}

For the groups $B_n$ and $D_n$, with $Z_k \sim \GR(p)$ and the modifications \eqref{14a}, \eqref{14b}, the calculation is more extensive but its procedure is the same as in the proof of Lemma~\ref{lemma2.6}b). On $D_n$, we first have
\begin{subequations}
\begin{align}
    \Cov(\hat{X}_\inv^D, X_\des^D) = \sum_{j=1}^n\sum_{k=1}^{n-1} &(n-j)\Cov(U_j\textbf{1}\{Z_j > 0\}, \textbf{1}\{Z_k > Z_{k+1}\}) \label{23a} \\
    +~&(n-j)\Cov((2-U_j)\textbf{1}\{Z_j < 0\}, \textbf{1}\{Z_k > Z_{k+1}\}) \label{23b} \\
    +~& (j-1)\Cov(2pU_j - U_j + 1, \textbf{1}\{Z_k > Z_{k+1}\}). \label{23c} \\
    + \sum_{j=1}^n &(n-j)\Cov(U_j\textbf{1}\{Z_j > 0\}, \textbf{1}\{-Z_2 > Z_1\}) \label{23d} \\
    +~&(n-j)\Cov((2-U_j)\textbf{1}\{Z_j < 0\}, \textbf{1}\{-Z_2 > Z_1\}) \label{23e} \\
    +~& (j-1)\Cov(2pU_j - U_j + 1, \textbf{1}\{-Z_2 > Z_1\}). \label{23f}
\end{align}
\end{subequations}
In the first three rows \eqref{23a} -- \eqref{23c}, there is cancellation if $j \notin \{1,n\}$ due to previously used arguments. Only $j=1$ is relevant in \eqref{23a}, \eqref{23b} and only $j=n$ is relevant in \eqref{23c}. We have $\E(U_j\textbf{1}\{Z_j > 0\})\E(\textbf{1}\{Z_j > Z_{j+1}\}) = q/4$, and the joint density of $Z_j$ and $Z_{j+1}$ is
\[f_p(x,y) := f_p(x)f_p(y) = \begin{cases} p^2, & x, y < 0 \\ pq, & x>0, y<0 \\ pq, & x<0, y>0 \\ q^2, & x,y>0 \end{cases}.\]
By Fubini's Theorem, we obtain
\begin{align*}
    \E(U_1&\textbf{1}\{Z_1 > 0\}\textbf{1}\{Z_1 > Z_2\}) = \int_{[-1,1]^2} |x|\textbf{1}\{x>0\}\textbf{1}\{x>y\}f_p(x,y)\text{d}(x,y) \\
    &= \int_{[0,1]^2} q^2x\textbf{1}\{x>y\}\text{d}(x,y)  + \int_{[0,1]\times [-1,0]} pqx\textbf{1}\{x>y\}\text{d}(x,y) \\
    &= q^2\int_0^1 x^2dx + pq\int_0^1 x\dx = \frac{q^2}{3} + \frac{pq}{2},
\end{align*}
and likewise,
\begin{align*}
    \E\bigl((2 - U_1)\textbf{1}\{Z_1 < 0\}\textbf{1}\{Z_1 > Z_2\}\bigr) &= \int_{-1}^0 \int_{-1}^1 (2 - |x|)\textbf{1}\{x>y\}f_p(x,y)\text{d}(x,y) \\
    &= p^2\int_{-1}^0 (2+x)(1+x)\dx = \frac{5}{6}p^2, \\
    \E\bigl((2p-1)U_n\textbf{1}\{Z_{n-1}>Z_n\}\bigr) &= (2p-1)\int_{[-1,1]^2} |x|\textbf{1}\{x<y\}f_p(x,y)\text{d}(x,y) \\
    &= (2p-1)\left(\frac{p^2}{3} + \frac{q^2}{6} + \frac{pq}{2}\right) = (2p-1)\frac{p+1}{6}.
\end{align*}
Therefore, 
\[\eqref{23a} + \eqref{23b} + \eqref{23c} = (n-1)\left(-\frac{p^3}{3} + \frac{5p^2}{3} - \frac{4p}{3} + \frac{1}{6}\right)\]
has linear order. The remaining three rows \eqref{23d} -- \eqref{23f} also have no more than linear order, since the summands are nonzero only for $j=1$ or $j=2$. \hfill \qed

\bibliographystyle{acm}
\bibliography{bibliography.bib}

\end{document}